\documentclass[11pt]{amsart}

\title{Hyperplane Incidences and Distance Sets in Higher Dimensions}
\author{Marianna Cs\"ornyei} 
\address{Department of Mathematics, University of Chicago, Chicago, IL 60637}  
\email{csornyei@math.uchicago.edu}

\author{D. M. Stull}
\address{Department of Mathematics, University of Chicago, Chicago, IL 60637}
\email{dmstull@uchicago.edu}

\usepackage{amsmath,amssymb,amsthm,amsfonts,mathtools,bm,xcolor}
\usepackage[T1]{fontenc}
\usepackage[colorlinks=true]{hyperref}

\theoremstyle{plain}
\newtheorem{theorem}{Theorem}[section]

\newtheorem*{claim*}{Claim}

\newtheorem{proposition}[theorem]{Proposition}
\newtheorem*{proposition*}{Proposition}

\newtheorem*{fact*}{Fact}

\newtheorem*{conjecture*}{Conjecture}
\newtheorem{corollary}[theorem]{Corollary}
\newtheorem{lemma}[theorem]{Lemma}
\newtheorem*{lemma*}{Lemma}

\newtheorem*{question*}{Question}
\theoremstyle{definition}\newtheorem{remark}[theorem]{Remark}
\theoremstyle{definition}\newtheorem*{remark*}{Remark}
\theoremstyle{definition}\newtheorem{definition}[theorem]{Definition}
\theoremstyle{definition}\newtheorem*{definition*}{Definition}
\theoremstyle{definition}
\theoremstyle{definition}
\theoremstyle{definition}
\theoremstyle{definition}\newtheorem*{example*}{Example}

\newcommand\lessim{\lesssim}
\renewcommand{\phi}{\varphi} 
\renewcommand{\epsilon}{\varepsilon} 
\def\eps{{\epsilon}}

\def\N{\mathbb{N}} 
\def\Q{\mathbb{Q}} 
\def\R{\mathbb{R}} 
\def\P{\mathbb{P}} 

\def\S{\mathcal{S}}
\def\P{\mathcal{P}}
\def\T{\mathcal{T}}
\def\D{\mathcal{D}}
\def\H{\mathcal{H}}
\begin{document}

\begin{abstract}
    We generalize Ren and Wang's incidence bound between points and lines in $\R^2$ \cite{RenWan23} to higher dimensions. We show how to use this incidence bound to improve the best known bound for Falconer's distance set problem in $\R^3$ and in $\R^4$. We show that if $d=3$ or $d=4$, and $E\subset \R^d$ is a Borel set of dimension $\dim_H(E) > d/2$, then
    \begin{equation*}
        \sup_{x\in E} \dim_H(\Delta_x(E)) \geq 2/3,
    \end{equation*}
    where $\Delta_x(E)$ is the pinned distance set of $E$ with respect to $x$.
    
    We also show how the incidence bound can be used to generalize the planar Furstenberg set bound, to sets in $\R^d$ that contain a $t$-dimensional set of hyperplanes, each of which contains an $s$-dimensional set of points, for any $d\ge 2$, $s \in (d-2, d-1]$ and $t \in (0, d]$.

\end{abstract}
\maketitle

\section{Introduction}
In this paper, we prove lower bounds on the discretized incidences between hyperplanes and points in $\R^d$. For any dyadic scale $\delta >0$, a $\delta$-hyperplane is the dual set of the points covered by a dyadic cube. We are interested in the following problem. Let $\P$ be a finite set of $\delta$-cubes in $\R^d$, and let $\H$ be a finite set of $\delta$-hyperplanes. Suppose that, for each cube $Q$ in $\P$, the number of $H \in \H$ which intersect $Q$ is at least $M$. How large must $\H$ be? 

To be nontrivial, we need additional assumptions about how `separated' the $\delta$-hyperplanes are. We will use a Frostman-type non-concentration condition to prove our main theorem. This non-concentration condition is the natural analog of the non-concentration condition used by Ren and Wang \cite{RenWan23} in their solution of the Furstenberg problem in the plane. We defer the precise definitions to Section \ref{ssec:prelimDiscretized}.  
\begin{theorem}\label{thm:hyperplanesIncidence}
    Let $d \geq 2$. For every $\epsilon > 0$, $s\in (d-2, d-1]$ and $t\in(0,d]$, there exists $\eta > 0$ such that for every small enough $\delta>0$ and every $M\in\N$, the following holds. Let  $\mathcal{P}\subseteq \D_\delta([0,1]^d)$ be a $(\delta, t, \delta^{-\eta})$-set and suppose that, for each $Q\in\P$, there is a $(\delta, s, \delta^{-\eta})$-set of $\delta$-hyperplanes $H(Q)$ such that each hyperplane in $H(Q)$ intersects $Q$ and $\vert H(Q) \vert \sim M$. Then for $\H := \bigcup_{Q\in\P} H(Q)$, we have
    \begin{equation}\label{eq:hyperplanesIncidenceBound}
        \vert \H\vert \gtrsim_\eps \delta^{-\min\{t, \frac{s+t - (d-2)}{2}, 1\} + \eps}M.
    \end{equation}
    
\end{theorem}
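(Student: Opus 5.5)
Since $s>d-2$, I plan to reduce the hyperplane problem to the planar point--line incidence theorem of Ren and Wang by slicing with $2$-planes (or, dually, by projecting), and then to recover the full bound through a Fubini-type averaging over slices.

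\emph{Duality.} A $\delta$-hyperplane through $Q$ corresponds, under point--hyperplane duality, to a $\delta$-cube lying in the dual $\delta$-hyperplane of $Q$. So $H(Q)$ becomes an $s$-dimensional set of $\delta$-points inside a $(d-1)$-dimensional $\delta$-slab. Since $s>d-2$, such a set cannot concentrate near any $(d-2)$-dimensional affine subspace of that slab.

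\emph{Reduction to the plane.} Fix a generic $(d-2)$-dimensional family of directions and project orthogonally onto a $2$-plane $V$ chosen at random from the Grassmannian (or, equivalently, slice by translates of a $2$-plane). Each hyperplane $H$ transverse to $V^\perp$ becomes a line in $V$, and each cube becomes a $\delta$-disc. The incidence $Q\cap H\ne\emptyset$ is preserved.
\begin{itemize}
\item \textbf{Lines.} By a Marstrand/Kaufman-type projection theorem for the Grassmannian (Frostman discretized version), for most $V$ the projected family of lines $\pi_V(H(Q))$ is a $(\delta, s-(d-2), \delta^{-O(\eta)})$-set of lines through $\pi_V(Q)$, with cardinality $\gtrsim \delta^{O(\eta)}M\,\delta^{d-2}$ after refinement. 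This is exactly where $s>d-2$ is used: the projection loses at most $d-2$ dimensions.
\item \textbf{Points.} The projected point set $\pi_V(\P)$ is a $(\delta,\min\{t,2\},\delta^{-O(\eta)})$-set for most $V$.
\end{itemize}
Applying Ren--Wang (the discretized Furstenberg incidence bound with exponent $\min\{t, (s'+t)/2, 1\}$, where $s'=s-(d-2)$) in $V$ then gives the bound for the lines, hence for $|\H|$, up to the $\delta^{d-2}$ bookkeeping factors.

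\emph{Multiplicity and fibers.} Two issues need care. First, many cubes of $\P$ project to the same disc. Second, many hyperplanes project to the same line. A cleaner route is to replace the projection by slicing: restrict to the points $Q$ lying in a typical $2$-plane slice $W$ from a $(d-2)$-parameter family of parallel planes. A typical slice contains a $(t-(d-2))$-dimensional portion of $\P$, but this loses too much in $t$, so I prefer projection and then handle fibers by pigeonholing. Concretely, I would pigeonhole dyadic values for:
\begin{itemize}
\item the number of cubes per fiber of $\pi_V$;
\item the number of hyperplanes per projected line;
\end{itemize}
and then show that the multiplicities cancel. Distinct hyperplanes through distinct cubes with the same image line must differ in their $V^\perp$ components, and the non-concentration of $H(Q)$ bounds this multiplicity by $\delta^{-(d-2)}$ times the line multiplicity.

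\textbf{Main obstacle.} I expect the hardest step to be obtaining, for a \emph{single} $V$, simultaneous Frostman non-concentration of all the projected families $\pi_V(H(Q))$ for most $Q$, together with the point set. I would first try an exceptional-set estimate: average the projection theorem over $Q$ and use Fubini on the Grassmannian to find one good $V$ that works for a $\delta^{O(\eta)}$-fraction of the $Q$'s. If that fails, the fallback is induction on $d$ through hyperplane slices, using the $(d-1)$-dimensional theorem applied to the sets $H(Q)\cap\{\text{hyperplanes containing a fixed direction}\}$.
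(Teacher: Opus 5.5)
\textbf{There is a genuine gap in your main route, at its core.}

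\emph{The geometry is wrong.} Under orthogonal projection onto a $2$-plane $V$, take a hyperplane $H$ whose linear part $L$ does not contain $V^\perp$. Then $L+V^\perp=\R^d$, so $H$ projects onto all of $V$, not onto a line. Only hyperplanes containing $V^\perp$ (normals within $\delta$ of $V$) become lines. For those, $H=\pi_V^{-1}(\pi_V H)$, so there is no ``many hyperplanes per line'' multiplicity to cancel. If you instead mean $H\mapsto H\cap V$, then incidences with $\pi_V(Q)$ are not preserved.

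\emph{The tool is misidentified.} The numbers you quote for the lines through $\pi_V(Q)$ --- an $(s-(d-2))$-dimensional family of size $\delta^{d-2}M$ --- come from \emph{slicing} the normals of $H(Q)$ by the great circle $S^{d-1}\cap V$, as in Theorem~\ref{thm:discretizedSlicing}. A Marstrand/Kaufman projection theorem would instead give dimension $\min\{s,1\}$.

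\emph{The key counting step is missing.} Set $\mathcal{H}_V=\{H\in\mathcal{H}: H^\perp \text{ within } \delta \text{ of } V\}$, and let $\alpha$ be the Ren--Wang exponent. Ren--Wang in $V$ only gives $|\mathcal{H}_V|\gtrsim\delta^{-\alpha+\eps}\delta^{d-2}M$. What you call bookkeeping is really the missing idea: you need $|\mathcal{H}_V|\lesssim\delta^{d-2-O(\eta)}|\mathcal{H}|$ for the chosen $V$. This is a Fubini/Markov fact about an \emph{arbitrary} finite family and a generic $V$; its codimension-one version is Lemma~\ref{lem:discretizedIntersection}, proved from $K^A_n(e\mid H)\lessapprox(d-2)n$. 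It does not follow from the non-concentration of $H(Q)$. Your multiplicity argument also points the wrong way: an upper bound on hyperplanes per line cannot give a lower bound on $|\mathcal{H}|$.

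\emph{Your ``main obstacle'' is left open.} The set of $Q$ whose slice is good depends on $V$, so a projection theorem for a fixed point set does not apply to it. The paper resolves this algorithmically:
\begin{itemize}
\item Fix $Q$ with $K^A_m(Q)\gtrapprox tm-\eta n$.
\item Choose a generic $e$ with $K^A_n(e\mid Q)\ge(d-1)n-O(\eta n)$, such that $Q$ lies in the set $\P_1$ of cubes $R$ for which $\{H\in H(R): H^\perp\subset e^\perp\}$ contains a large $(\delta,s-1,\delta^{-O(\eta)})$-subset.
\item $\P_1$ is enumerable given $e$, and Lemma~\ref{lem:algSlicing} gives $K^{A,e}_m(\pi_{e^\perp}Q)\gtrapprox\min\{t,d-1\}m-O(\eta n)$.
\item Proposition~\ref{prop:enumerableSset} then extracts a large $(\delta,t')$-subset of $\pi_{e^\perp}\P_1$.
\end{itemize}

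Your fallback is exactly the paper's proof: Lemma~\ref{lem:inductionHelpPlanes2} plus induction $d\to d-1$, using $\frac{(s-1)+t'-(d-3)}{2}=\frac{s+t'-(d-2)}{2}$. If you restrict to hyperplanes containing $V^\perp$ and add the counting lemma, your one-step reduction to the plane becomes the $(d-2)$-fold iterate of that step. It would then also work, since with $t'=\min\{t,2\}$ and $s>d-2$ the exponents agree.
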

Theorem \ref{thm:hyperplanesIncidence} generalizes Ren and Wang's recent breakthrough on the Furstenberg set problem in the plane \cite{RenWan23}.

The proof of Theorem \ref{thm:hyperplanesIncidence} uses a discretized analog of the induction scheme developed in \cite{ChoCsoLutLutMayStu24} for estimating dimensions of exceptional sets of projections. However, the discretization of this induction requires new ideas, and, in particular, makes use of algorithmic methods. Briefly, the main idea of the induction is to use Ren and Wang's incidence estimate for tubes as a base case. We then use discretized analogs of Marstrand's projection theorem and Mattila's slicing theorem to reduce higher dimensional incidence estimates to lower dimensional incidence estimates. 

In Section \ref{sec:discretizedSlicingProjection} we prove our discretized analogs of Marstrand's projection theorem and Mattila's slicing theorem. We believe that these discrete results are interesting by themselves, and may be of future use.

\medskip
In Section \ref{sec:IncidenceHyperplanes} we use our discretized slicing and projection theorems to inductively prove Theorem \ref{thm:hyperplanesIncidence}, and we show how it can be applied to higher dimensional Furstenberg sets.

An $(s,t)$-hyperplane Furstenberg set in $\R^d$ is a set $F$ with the following property. There is a set $\H \subseteq \mathcal{A}(d,d-1)$ of affine hyperplanes in $\R^d$ with $\dim_H(\H) \geq t$ such that $\dim_H(F \cap H) \geq s$ for every $H\in\H$.

In a recent breakthrough, Ren and Wang \cite{RenWan23}, building on \cite{OrpShm23}, gave sharp bounds for $(s,t)$-Furstenberg sets in $\R^2$, proving the following theorem.
\begin{theorem}
    Let $s \in (0, 1]$, $t\in (0,2]$. If $F\subseteq \R^2$ is an $(s,t)$-Furstenberg set, then
    \begin{equation*}
        \dim_H(F) \geq \min\{s+t, \frac{3s+t}{2}, s+1\}.
    \end{equation*}
\end{theorem}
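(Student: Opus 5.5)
This theorem is Ren and Wang's planar Furstenberg bound \cite{RenWan23}, which the paper cites rather than proves. The plan is to derive it from its discretized incidence form, which is exactly the case $d=2$ of Theorem \ref{thm:hyperplanesIncidence}. For $d=2$ we have $d-2=0$, so the exponent there is $\min\{t,\frac{s+t}{2},1\}$. Adding $s$ gives $\min\{s+t,\frac{3s+t}{2},s+1\}$, the target bound.

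\emph{Step 1: discretization.} Assume towards a contradiction that $\dim_H(F)<\sigma:=\min\{s+t,\frac{3s+t}{2},s+1\}-10\epsilon$, and use a cover of $F$ by dyadic cubes of total $\sigma$-content much less than $1$. Take a Frostman measure on the line set $\H$. Use the standard pigeonholing from Héra--Shmerkin--Yavicoli and Orponen--Shmerkin to fix a single dyadic scale $\delta$ and a subfamily. The goal is the following configuration:
\begin{itemize}
\item a $(\delta,t,\delta^{-\eta})$-set $\T$ of $\delta$-tubes;
\item for each $T\in\T$, a $(\delta,s,\delta^{-\eta})$-set $P(T)$ of $\delta$-squares of size $|P(T)|\sim M\approx\delta^{-s}$;
\item each square of $P(T)$ meets $T$ and lies in $F_\delta$, the union of cover cubes of side about $\delta$;
\item $|F_\delta|\lesssim\delta^{-\sigma}$.
\end{itemize}
Here $\eta=\eta(\epsilon)$ is the parameter from Theorem \ref{thm:hyperplanesIncidence}.

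\emph{Step 2: duality and the incidence bound.} Apply point--line duality in $\R^2$. Tubes become $\delta$-squares and $\delta$-squares become $\delta$-tubes, with incidences preserved up to constants on a bounded parameter region. Frostman non-concentration is also preserved, since the duality is locally bi-Lipschitz there. The configuration then becomes a $(\delta,t,\delta^{-\eta})$-set of $\delta$-squares, each meeting a $(\delta,s,\delta^{-\eta})$-set of $\sim M$ $\delta$-lines. Theorem \ref{thm:hyperplanesIncidence} with $d=2$ gives
\begin{equation*}
|F_\delta|\ge|\textstyle\bigcup_T P(T)|\gtrsim_\epsilon\delta^{-\min\{t,\frac{s+t}{2},1\}+\epsilon}\,\delta^{-s}.
\end{equation*}
This contradicts $|F_\delta|\lesssim\delta^{-\sigma}$ once $\delta$ is small enough.

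\emph{Main obstacle.} Step 1 is routine bookkeeping, though the losses must be tracked as $\delta^{-O(\eta)}$ uniformly in the chosen scale. The real content is the planar incidence estimate itself, i.e.\ the base case $d=2$ of Theorem \ref{thm:hyperplanesIncidence}, which I would take from \cite{RenWan23}.

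If it had to be reproved, I would follow Ren--Wang's route. First, induct on scales using a two-ends/Frostman-type reduction. Then handle two regimes separately:
\begin{itemize}
\item the ``well-spaced'' regime, via Fu--Ren style high--low Fourier incidence bounds;
\item the ``concentrated'' regime, via Orponen--Shmerkin's $ABC$ sum--product estimate \cite{OrpShm23}.
\end{itemize}
The step I expect to be genuinely hard is combining these two regimes across multiple scales without losing the sharp exponent $\frac{s+t}{2}$.
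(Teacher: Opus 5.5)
Your proposal is correct. It is the standard classical reduction from the planar incidence estimate, Theorem \ref{thm:RenWang}, to the Furstenberg bound: discretize at a single pigeonholed scale, then apply point--line duality. The paper does not prove this statement; it cites it. It does, however, recover it as the case $d=2$ of Theorem \ref{thm:FurstenbergHyperplanes}, and there the reduction from the same incidence estimate is genuinely different.

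The paper's route goes through effective dimension:
\begin{itemize}
\item Take a Hausdorff oracle $A$ for $F$ and the line family.
\item Pick one line $H$ with $\dim^A_H(H)>t-\eps/2$, and one point $x\in F\cap H$ with $\dim^{A,H}_H(x)>s-\eps/2$.
\item Dualize these to a point $y_H$ lying on a line $H_x$.
\item Apply the pointwise bound of Theorem \ref{thm:algorithmicIncidenceHyperplanes}. This gives $K^A_n(x)\approx K^A_n(H_x)\gtrapprox (s+\alpha)n-O(\eps n)$ at every large precision $n$.
\item Conclude with the point-to-set principle.
\end{itemize}
The discretized configuration you build from a cover of $F$ is produced differently in the paper. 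Inside the proof of Theorem \ref{thm:algorithmicIncidenceHyperplanes}, it enumerates objects of bounded complexity and extracts non-concentrated subsets with Proposition \ref{prop:enumerableSset}.

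What each route buys:
\begin{itemize}
\item Your route stays within classical geometric measure theory.
\item The algorithmic route needs no measurability of $F$ or of the line family, and no pigeonholing over scales.
\item It also yields a pointwise statement that rescales to arbitrary windows of precisions (Corollary \ref{cor:algorithmicIncidenceHyperplanes2}). This is exactly what the later distance-set argument needs.
\end{itemize}

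One technical point in your Step 1. A Frostman measure on the line family exists only if that family is Borel (or analytic), and the definition of a Furstenberg set does not assume this. For an arbitrary family, work with Hausdorff content instead:
\begin{itemize}
\item use countable stability and subadditivity of content to select the scale;
\item use a single-scale discrete Frostman lemma, valid for any set of positive content, to extract the $(\delta,t,\delta^{-\eta})$-set of tubes.
\end{itemize}
The rest of your argument goes through unchanged. In particular, you only need $M\gtrsim\delta^{-s+\eta}$, which the non-concentration of each $P(T)$ gives automatically.
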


Furstenberg sets are less understood in higher dimensions, although there has been exciting progress \cite{BriDha25, BriOrtZak25, DabOrpVil22, Fiedler25, Hera19, HerKelMat19, Oberlin07, Ren23}. In Section \ref{sec:IncidenceHyperplanes}, we generalize Ren and Wang's result to hyperplanes in $\R^d$:
\begin{theorem}\label{thm:FurstenbergHyperplanes}
    Let $s \in (d-2, d-1]$ and  $t \in (0, d]$. If $F$ is an $(s,t)$-hyperplane Furstenberg set in $\R^d$, then
    \begin{equation*}
        \dim_H(F) \geq \min\{s+t, \frac{3s + t - (d-2)}{2}, s+1\}.
    \end{equation*}
\end{theorem}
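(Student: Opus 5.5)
We will deduce Theorem \ref{thm:FurstenbergHyperplanes} from Theorem \ref{thm:hyperplanesIncidence} by the standard discretization and duality argument used for planar Furstenberg sets, with the roles of points and hyperplanes exchanged.

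\textbf{Step 1: reduce to a discretized statement.} Suppose, for contradiction, that $\dim_H(F) < \sigma := \min\{s+t, \frac{3s+t-(d-2)}{2}, s+1\} - 10\epsilon$. We may shrink $s,t$ slightly and pass to a subfamily. Then, by Frostman's lemma on $\mathcal{A}(d,d-1)$, we obtain a measure on $\H$ with $t$-dimensional decay. For each $H$, we obtain an $s$-Frostman measure on $F\cap H$; the measurable-selection issues are handled as in the planar case. Cover $F$ efficiently by dyadic cubes of sizes $2^{-k}$, $k\ge k_0$, with $\sum_k |\mathcal{C}_k| 2^{-k\sigma}<1$. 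By pigeonholing, we find a single scale $\delta=2^{-k}$ and a $\delta^{\eta}$-proportion (in measure) of hyperplanes $H$, each of which has a $\gtrsim 1/k^2$ portion of its $s$-measure inside $\bigcup\mathcal{C}_k$. A second pigeonholing step, applied both to the $t$-measure on hyperplanes and to the $s$-measures, then produces two objects:
\begin{itemize}
\item a $(\delta,t,\delta^{-\eta})$-set $\H_\delta$ of $\delta$-hyperplanes;
\item for each $H\in\H_\delta$, a $(\delta,s,\delta^{-\eta})$-set $P(H)\subseteq \mathcal{C}_k$ of $\delta$-cubes meeting $H$, with $|P(H)|\sim M\approx\delta^{-s}$ up to $\delta^{-\eta}$ factors.
\end{itemize}
We also have $|\mathcal{C}_k|\le \delta^{-\sigma}$. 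The standard refinement lemmas give uniformity of the cardinalities.

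\textbf{Step 2: dualize.} Under point--hyperplane duality, which is bi-Lipschitz on compact pieces after removing near-vertical hyperplanes by a rotation and a finite partition, each $\delta$-hyperplane becomes a $\delta$-cube and each $\delta$-cube becomes a $\delta$-hyperplane. Incidence is preserved up to a constant dilation of $\delta$. The $(\delta,\cdot,C)$-set conditions are preserved up to constants, since the duality is bi-Lipschitz between the parameter spaces. Applying this, $\P:=\H_\delta^*$ is a $(\delta,t,\delta^{-\eta})$-set of cubes, and each $Q=H^*$ carries a $(\delta,s,\delta^{-\eta})$-set $H(Q):=P(H)^*$ of $\delta$-hyperplanes through it, with $|H(Q)|\sim M$. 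This is exactly the setting of Theorem \ref{thm:hyperplanesIncidence}, which yields
\[
|\mathcal{C}_k|\ge \Big|\bigcup_Q H(Q)\Big| \gtrsim_\epsilon \delta^{-\min\{t,\frac{s+t-(d-2)}{2},1\}+\epsilon}\,\delta^{-s}.
\]
The right-hand side is $\delta^{-\min\{s+t,\frac{3s+t-(d-2)}{2},s+1\}+O(\epsilon)}$, which contradicts $|\mathcal{C}_k|\le\delta^{-\sigma}$ for small $\delta$ once $\eta$ is chosen from Theorem \ref{thm:hyperplanesIncidence} and $k_0$ is large.

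\textbf{Main obstacle.} We expect the main difficulty to lie in Step 1. The pigeonholing must produce, at one common scale $\delta$, genuine Frostman-type $(\delta,s)$-sets inside each hyperplane while keeping a $(\delta,t)$-set of hyperplanes; we need non-concentration, not merely cardinality. Our first attempt would follow the planar argument of H\'era--Shmerkin--Yavicoli and Ren--Wang: restrict the $s$-Frostman measure on $F\cap H$ to the cubes of $\mathcal{C}_k$ and note that a set of measure $\gtrsim k^{-2}$ under an $s$-Frostman measure contains a $(\delta,s,O(k^2))$-subset. We then fix the cardinality by dyadic pigeonholing. The loss $k^2=\log^2(1/\delta)$ is absorbed into $\delta^{-\eta}$.
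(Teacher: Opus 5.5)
Your proposal is correct in outline, but it reaches Theorem \ref{thm:hyperplanesIncidence} by a different route from the paper.

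\textbf{Your route.} You run the classical H\'era--Shmerkin--Yavicoli/Ren--Wang reduction. You take efficient covers $\mathcal{C}_k$, use Frostman measures, and pigeonhole to one scale. You then build a $(\delta,t)$-set of $\delta$-hyperplanes, each carrying a $(\delta,s)$-set of cubes of $\mathcal{C}_k$, and uniformize $M$. Finally you dualize and compare $|\mathcal{C}_k|\le\delta^{-\sigma}$ with $\delta^{-\alpha+\epsilon}M$, using $M\gtrsim\delta^{-s+\eta}$.

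\textbf{The paper's route.} The paper argues pointwise via the point-to-set principle. With a Hausdorff oracle $A$ for $F$ and $\mathcal{H}$, it picks a single $H\in\mathcal{H}$ with $\dim^A_H(H)>t-\epsilon/2$ and a single $x\in F\cap H$ with $\dim^{A,H}_H(x)>s-\epsilon/2$. It dualizes to $(y_H,H_x)$ and applies Theorem \ref{thm:algorithmicIncidenceHyperplanes} to get $K^A_n(x)\approx K^A_n(H_x)\gtrapprox K^A_n(H_x\mid y_H)+\alpha n-\epsilon n/2\gtrapprox(s'+\alpha)n-O(\epsilon n)$ for all large $n$. The discretization you do by hand is hidden inside Theorem \ref{thm:algorithmicIncidenceHyperplanes}. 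There the $(\delta,t)$- and $(\delta,s)$-sets come from enumerating low-complexity cubes and hyperplanes via Proposition \ref{prop:enumerableSset}. No scale selection is needed, since one pair $(H,x)$ has high complexity at every precision.

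\textbf{What each buys.} The point-to-set principle holds for arbitrary sets, so the paper needs no measurability; its definition of a hyperplane Furstenberg set does not assume $F$ or $\mathcal{H}$ Borel. The resulting pointwise complexity bound is also the form reused in the distance-set application. Your route avoids Kolmogorov complexity altogether.

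\textbf{One point to tighten.} Frostman's lemma on $\mathcal{H}$ and on each $F\cap H$, and a measurable choice $H\mapsto\mu_H$, require analytic sets. To get the theorem in the stated generality, run the same pigeonholing with the Hausdorff contents $\mathcal{H}^{t'}_\infty$ and $\mathcal{H}^{s'}_\infty$, for slightly smaller $t'<t$, $s'<s$. These contents are countably subadditive on arbitrary sets. Combine this with a discrete Frostman lemma. Use countable stability of Hausdorff dimension to pass to a bounded, non-vertical subfamily on which $\mathcal{H}^{s'}_\infty(F\cap H)$ is bounded below uniformly.
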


\medskip
In Sections \ref{sec:LipschitzFunction} and \ref{sec:algorithmicDimDistances} we apply our incidence bound to Falconer's (pinned) distance problem in $\R^3$ and $\R^4$. If $E\subseteq \R^d$ and $x\in\R^d$, the pinned distance set of $E$ with respect to $x$ is
\begin{equation*}
    \Delta_x(E) := \{\vert x - y\vert\mid y\in E\}.
\end{equation*}
The Falconer pinned distance conjecture is a generalization of the important Falconer distance set conjecture \cite{Falconer85Dis}. It states that, for any Borel $E\subseteq\R^d$, if $\dim_H(E) > d/2$, then $\Delta_x(E)$ has positive Lebesgue measure for some $x\in E$. This remains an important open problem, for every $d \geq 2$. There has been substantial progress on this problem \cite{ChoCsoLutLutMayStu25, DuOuRenZha23Hig, DuGutOuWanWilZha21, FieStu23, FieStu24Pin, GutIosOuWan20, Harris21, Liu20}. 

In this paper, we study the dimensional version of Falconer's conjecture. Given any Borel set $E\subseteq\R^d$ with $\dim_H(E) > d/2$, our aim is to find a lower bound on the quantity
\begin{equation}\label{eq:target}
    \sup\limits_{x\in E} \dim_H(\Delta_x(E)).
\end{equation}
 For sets of equal Hausdorff and packing dimension, Shmerkin and Wang, generalizing the planar results in \cite{Orponen17Dis, Shmerkin17Dis, Shmerkin19Pin}, proved in \cite{ShmWan25Dis} that this quantity is maximal in every dimension, i.e., 
 $$\sup\limits_{x\in E} \dim_H(\Delta_x( E))=1,$$for every $d\geq 2$ and for every $E\subset\R^d$ with $\dim_H(E)=\dim_P(E) > d/2$.
However, for general sets this is still an open problem. In $\R^2$, the current best known bound is $3/4$, due to Stull \cite{Stull26}. In higher dimensions, the best preciously known bound was recently established by Du, Ou, Ren and Zhang \cite{DuOuRenZha23Hig}. They proved the following theorem.
\begin{theorem}
     Let $d\geq 2$, $0<\alpha \leq d-1$, and $E\subseteq\R^d$ be compact. If $\dim_H(E) =\alpha$, then 
     \begin{equation*}
         \sup\limits_{x\in E} \dim_H(\Delta_x( E))\geq \min\{\alpha\frac{2d+1}{d+1} - (d-1), 1\}.
     \end{equation*}
 \end{theorem}
 In particular, if $\alpha = d /2$, 
 \begin{equation*}
         \sup\limits_{x\in E} \dim_H(\Delta_x (E))\geq \frac{d+2}{2(d+1)}.
     \end{equation*}

In particular, the previous best bound for \eqref{eq:target}, for $E\subseteq \R^3$ with $\dim_H(E) > 3/2$ was $5/8$. The best known bound for \eqref{eq:target}, for $E\subseteq \R^4$ with $\dim_H(E) > 2$ was $3/5$. 

In this paper, we use our hyperplane incidence bound, combined with algorithmic techniques and Ren's radial projection theorem \cite{Ren23} to improve the best known bounds for $d = 3,4$. We prove that:
\begin{theorem}\label{thm:mainthm1}
    Let $E\subseteq\R^d$ be a Borel set such that $\dim_H(E) > d/2$, where $d=3$ or $d=4$. Then
    \begin{equation}
        \sup\limits_{x\in E} \dim_H(\Delta_x (E)) \geq 2/3.
    \end{equation}
\end{theorem}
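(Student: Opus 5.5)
We plan to argue by contradiction at a single scale, combining the incidence bound of Theorem \ref{thm:hyperplanesIncidence} with Ren's radial projection theorem \cite{Ren23} and a point-to-set (algorithmic) analysis of pinned distances. Suppose $\dim_H(E) = \alpha > d/2$ and $\sup_{x\in E}\dim_H(\Delta_x(E)) < 2/3 - \eps'$. After standard reductions we may pass to a compact subset, take a Frostman measure $\mu$ on $E$, and find two separated pieces $E_1, E_2$ of positive measure. By Ren's radial projection theorem, for $\mu$-typical $x\in E_1$ the radial projection $\pi_x(\mu|_{E_2})$ has dimension at least $\min\{\alpha, d-1\}$. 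This gives a large, well-spread set of directions from each pin.

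The geometric step is linearization. For a pin $x$ and a point $y$ with $|x-y| = r$, the sphere $S(x,r)$ is, near $y$ and at scale $\delta$ relative to the coarse scale $\sqrt{\delta}$, well approximated by the hyperplane through $y$ orthogonal to $x-y$. We fix $y\in E_2$ and consider the pins $x\in E_1$. Each $\delta$-cube $Q$ near $y$ then corresponds to a family $H(Q)$ of $\delta$-hyperplanes, namely the tangent planes of the spheres $S(x,|x-y|)$. The radial projection bound makes $H(Q)$ a $(\delta,s,\delta^{-\eta})$-set with $s$ close to $d-1$. Since $d-1 > d-2$, this places us in the range $s\in(d-2,d-1]$ of Theorem \ref{thm:hyperplanesIncidence}. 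The points $y$ form a $(\delta,t)$-set with $t\approx \alpha > d/2$.

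If every pinned distance set has dimension below $2/3$, then at the relevant scale each pin sees only about $\delta^{-2/3}$ distinct radii. The corresponding hyperplanes are therefore few: $|\H|$ is bounded by roughly (number of pins) times $\delta^{-2/3}$, taken at the rescaled resolution. Against this, Theorem \ref{thm:hyperplanesIncidence} gives
\[
|\H|\gtrsim \delta^{-\min\{t,\frac{s+t-(d-2)}{2},1\}+\eps}M.
\]
With $s = d-1$ and $t > d/2$, the exponent $\frac{s+t-(d-2)}{2} = \frac{1+t}{2}$ exceeds $\frac{d+2}{4}$. For $d=3,4$ this, after the $\sqrt{\delta}$ rescaling inherent in the linearization, translates to a distance-dimension gain exceeding $2/3$. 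The restriction to $d\le 4$ should be exactly where this arithmetic closes.

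The main obstacle is making the scale bookkeeping rigorous. Pinned distance dimension is a Hausdorff-dimension quantity, and $E$ may have wildly varying multiscale structure, so a single-scale argument does not suffice. We would use the algorithmic framework, following the paper's Sections \ref{sec:LipschitzFunction} and \ref{sec:algorithmicDimDistances}. The Kolmogorov complexity $K_r(|x-y|)$ is written as a sum over a partition of scales $[r_i, r_{i+1}]$. On intervals where $y$ behaves like a high-dimensional set, the incidence bound applied to the rescaled configuration gives a gain of roughly $2/3$ of the interval length. On the remaining intervals, a projection/Lipschitz argument gives the trivial gain. Choosing the partition greedily, with a good pin $x$ from radial projection applied at every scale, and averaging should yield $\dim_H \Delta_x(E)\ge 2/3$. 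Checking that the bad intervals cannot accumulate enough to drop the average below $2/3$ is the step I expect to be hardest.
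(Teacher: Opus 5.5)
There is a genuine gap. The multiscale step you defer as the hardest one is essentially the whole proof, and the parameters you feed into it are wrong.

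\textbf{The value of $s$, and where $d\le 4$ enters.} Your claim that $H(Q)$ is an $s$-set with $s$ close to $d-1$ is false. The normals at $y$ are the radial projection of the pins from $y$. That map is Lipschitz away from $y$, so the normals have dimension at most $\dim E_1$. Ren's theorem therefore gives only $\min\{\alpha,d-1\}=\alpha$, which may be barely above $d/2$. The paper accordingly uses only $s>d/2$ (Lemma \ref{lem:C1C2C3}).

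As a consequence, your explanation of why $d\le 4$ is needed does not hold. The exponent $\frac{1+t}{2}$ you compute is cut off by the $1$ in the minimum and never sees $d$. The restriction really enters on scale windows $[m,n]$ on which $y$ grows at a rate $\sigma<d/2$. Note that $y$ is not a $(\delta,t)$-set with $t>d/2$ on a given window: its local growth rate ranges over $[0,d]$, and this is the entire difficulty. On such a window the exponent is $\frac{d/2+\sigma-(d-2)}{2}$. This is $\ge\sigma/2$ exactly when $d/2\ge d-2$, and that is what yields a gain of $\tfrac12K^A_{n,m}(y\mid y)$ there (Corollary \ref{cor:conditionalBoundsExactFrostman}).

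\textbf{The gains and the accounting.} The gains you assign are not the right ones, and ``averaging'' is not an argument. On a doubling $d/2$-Frostman window the incidence bound gives the \emph{full} gain $n-m$, not $\tfrac23$ of it. On slowly growing windows a ``trivial'' gain is not enough. For example, take $d=4$ with $K_t(y)$ growing at rate $4$ on $[0,3n/7]$ and at rate $\tfrac12$ on $[3n/7,n]$. Then every doubling $2$-Frostman interval lies inside $[0,3n/5]$, so reaching $2n/3$ requires a nontrivial gain where $\sigma<d/2$. That gain is exactly what Theorem \ref{thm:hyperplanesIncidence} supplies there.

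The number $2/3$ comes from a budget argument that your outline lacks:
\begin{enumerate}
\item Cap $K^A_n(y)$ at $dn/2$ using the oracle of Lemma \ref{lem:oracles}, so that the complexity profile $f$ is $d/2$-Katz--Tao.
\item Use Lemma \ref{parti} to split $[0,n]$ into $G$ (slope $d/2$), $R$ ($d/2$-Frostman, slope up to $d$) and $B$ (nearly linear, slope $\le d/2$), with $|G|\ge|R|$. This relies on Jones' $\beta$-numbers and on the fact that every increasing block is followed by a non-doubling good block.
\item The $R$ windows get full gain but use up to $d$ units of $y$'s complexity per unit length. Combine $K^{A,x}_n(|x-y|)\gtrapprox |R|+|G|+\tfrac12\sum_B(f(b)-f(a))$ with $\sum(f(b)-f(a))=dn/2$.
\item Using $|G|\ge|R|$ and $1-d/4\ge 0$, this gives $\max\{2|R|,\,dn/4-(3d/4-2)|R|\}-O(\eps n)\ge 2n/3-O(\eps n)$.
\end{enumerate}
Without $|G|\ge|R|$ and the half-gain on $B$, the steep windows can exhaust the budget and the bound collapses.

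\textbf{A smaller omission.} Ren's theorem requires $\mu(H)=\nu(H)=0$ for every hyperplane $H$. The complementary case needs the paper's induction on $d$, starting from the planar result, and your outline does not address it.
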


\medskip
Our application of the hyperplane incidence bound to the Furstenberg set problem is straightforward. However, the application of the incidence bound to the distance set problem is much more involved. In addition to the incidence bound, a key ingredient of our distance set result is a partitioning argument, similar to that of used by the second author in \cite{Stull26}. Later a similar decomposition was also used, in a crucial way, in the solution of the Kakeya problem in $\R^3$ by Wang and Zahl \cite{WanZah25Vol}.

In Section \ref{sec:LipschitzFunction} we describe our decomposition result. We decompose the Kolmogorov complexity function $K_n(x)$ (as a function of $n$) into intervals which obey Frostman or Katz-Tao type non-concentration conditions. The main tool we use in finding our decomposition is Jones' traveling salesman theorem in \cite{jones1990rectifiable}. In our proof of the distance set estimate in Section \ref{sec:algorithmicDimDistances} we choose our partition in such a way that each interval corresponds to a term in the minimum of \eqref{eq:hyperplanesIncidenceBound}.

\medskip
Finally, we remark that the reason we limit ourselves to dimensions 3 and 4 is due to \eqref{eq:hyperplanesIncidenceBound}, where we need $d/2 \geq d-2$, which is only true when $d\leq 4$. However, our proof also gives some bound in higher dimensions, so long as we assume that the set $E$ has sufficiently high dimension.

\section{Preliminaries}

\subsection{Discretized incidences}\label{ssec:prelimDiscretized}
We will write $A \lesssim B$ to denote $A \leq c B$, for some absolute constant $c$. We will use the subscript notation $A \lesssim_n B$ to denote $A \leq C_n B$, where $C_n$ is a constant depending on $n$. In a similar way, we will use $\gtrsim$ and $\gtrsim_n$. We write $A \sim B$ if $A \lesssim B$ and $B \lesssim A$, and similarly for $\sim_n$. 

Let $\delta$ be a dyadic scale, i.e., $\delta=2^{-n}$ for some $n\in\N$. For any set $A\subset\R^d$, we denote by $\D_\delta(A)$ the set of half-open, half-closed dyadic cubes of side length $\delta$ that intersect $A$. We will also use the notation $\D_\delta:=\D_\delta(\R^d)$.

We call a set of hyperplanes a $\delta$-hyperplane if it is the dual set of the points covered by a cube $Q\in\D_\delta$. We denote the set of $\delta$-hyperplanes by $\H^\delta$. 
We say that a $\delta$-cube $Q$ intersects a $\delta$-hyperplane $H$, if $H$ contains a hyperplane that meets $Q$, and we denote this by $H\cap Q\neq \emptyset$. In the special case when $d = 2$, we call $\delta$-hyperplanes $\delta$-tubes. 

In \cite{RenWan23} Ren and Wang proved sharp bounds on the incidences between $\delta$-tubes and $\delta$-cubes in $\R^2$, leading to a resolution of the Furstenberg set problem in the plane. They introduced the following definition, and proved Theorem \ref{thm:RenWang} below (see Theorem 4.1 in \cite{RenWan23}).

Let $s\in[0,d]$ and $c > 0$. A non-empty bounded set $A\subseteq\R^d$ is called a $(\delta, s, c)$-set if it satisfies the following Frostman-type condition. For every integer $0\leq m\leq n$ and each $Q\in \D_{2^{-m}}$, $\D_\delta(A\cap Q)$ contains at most a $c2^{-sm}$ portion of all the cubes in $\D_\delta(A)$.

\begin{remark}
There is an alternative definition of $(\delta, s, c)$-sets in the literature. For example, in Ren and Wang's solution to the Furstenberg set problem \cite{RenWan23}, $A$ is called a $(\delta, s, c)$-set if for every ball $B$ of radius $r \in [\delta, 1]$, $\D_\delta(A\cap B)$ contains at most a $cr^s$ portion of all the cubes in $\D_\delta(A)$. It is immediate that the two definitions are equivalent in the sense that, if $A$ is a $(\delta, s, c)$-set under one definition, then it is a $(\delta, s, c^\prime)$-set in the other, where $c'$ and $c$ agree up to a multiplicative absolute constant that depends only on the ambient dimension $d$. 
\end{remark}

When $\P\subseteq \D_\delta$, we say that $\P$ is a $(\delta, s, c)$-set if the set of points in $\R^d$ covered by the cubes in $\P$ is a $(\delta, s, c)$-set. A set $\H$ of $\delta$-hyperplanes in $\R^d$ is called a $(\delta, s, c)$-set if its dual is a $(\delta, s, c)$-set.

\medskip

We will use the following theorem due to Ren and Wang \cite{RenWan23} which gives sharp bounds on the incidences between tubes and cubes in the plane. This will be used as our base case in the proof of Theorem \ref{thm:hyperplanesIncidence}.
\begin{theorem}\label{thm:RenWang}
   For every $\epsilon > 0$, $s\in (0,1]$, and $t\in (0, 2]$, there exists $\eta > 0$ such that for every small enough $\delta>0$ and for every $M\in\N$ the following holds.
   
   Suppose that  $\mathcal{P}\subseteq\D_\delta([0,1]^2)$ is a $(\delta, t, \delta^{-\eta})$-set, and suppose that for each $Q\in\P$ there is a $(\delta, s, \delta^{-\eta})$-set of tubes $T(Q)$ such that each tube in $T(Q)$ intersects $Q$, and $|T(Q)|\sim M$. Then for $\T:=\bigcup_{Q\in\P} T(Q)$ we have
   \begin{equation}
       \vert \mathcal{T}|\gtrsim_\eps \delta^{-\min\{t, \frac{s+t}{2}, 1\} + \epsilon} M.
   \end{equation}
\end{theorem}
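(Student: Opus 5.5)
Since this is Theorem 4.1 of \cite{RenWan23}, we would simply invoke it. For the reader's orientation, this is the route we would take to reprove it. We would view $\P$ as a discretized $t$-dimensional point set, and each $T(Q)$ as an $s$-dimensional family of directions through $Q$. The target exponent $\min\{t,\frac{s+t}{2},1\}$ splits into three regimes, and we would handle each by a different incidence mechanism. First, we would normalize by standard pigeonholing, at a cost of $\delta^{-O(\eta)}$ factors. The aim is that $\P$ is uniform across a fixed sequence of scales $\delta=\Delta^N<\dots<\Delta<1$, meaning its branching numbers are constant at each level, and that $\T$ has roughly constant multiplicity.

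\emph{Extreme regimes.} When $t\le s$ the target is $\delta^{-t}M$. Here we would use a Cauchy--Schwarz (two-ends) count. Two $\delta$-cubes at distance $r$ share at most $\lesssim (\delta/r)^{s}M\delta^{-\eta}$ common tubes from the $(\delta,s)$-sets $T(Q)$. Summing this against the $(\delta,t)$ condition on $\P$ shows that the tubes are essentially distinct across cubes, up to $\delta^{-O(\eta)}$. This is the Fu--Ren Katz--Tao-type bound. When $s+t\ge 2$ the target is $\delta^{-1}M$. Here we would use the high--low Fourier method of Guth--Solomon--Wang, which gives sharp incidences for well-spaced configurations, together with a rescaling argument reducing general Frostman sets to the well-spaced case.

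\emph{Intermediate regime.} The essential case is $s<t<2-s$, with target $\frac{s+t}{2}$. We would induct on scales. Assuming the bound at all coarser scale pairs, we would choose the multiscale decomposition adaptively. On each block $[\Delta_{j+1},\Delta_j]$, after rescaling, the restricted configuration should be either Katz--Tao-like, where the first regime's bound applies, or Frostman with $s_j+t_j\ge 2$, where the high--low bound applies. Multiplying the block estimates then gives $\frac{s+t}{2}$, provided no block is ``bad''. A bad block is one where the local configuration concentrates near a sum-product-type structure. We would exclude bad blocks using the Orponen--Shmerkin discretized projection ($ABC$) theorem. That theorem gives an $\epsilon$-gain over the trivial bound whenever the point set is not concentrated on a line, and this gain feeds back into the induction.

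\emph{Main obstacle.} The main obstacle is the bookkeeping in this induction. We must show that the gain from the $ABC$ theorem at bad scales, together with the lossless estimates at good scales, closes the induction with only a $\delta^{-\epsilon}$ loss. This requires choosing $\eta\ll\epsilon$ and the scale lengths uniformly in $s,t$. It is precisely the technical heart of \cite{RenWan23}, and in this paper we take it as a black box.
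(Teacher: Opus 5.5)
Your proposal agrees with the paper: the statement is not proved there but imported directly as Theorem 4.1 of \cite{RenWan23}. The one thing to add when you invoke it is the paper's remark that its dyadic definition of $(\delta,s,c)$-sets is equivalent to Ren and Wang's ball-based definition up to a dimensional constant in $c$; your orientation sketch of the Ren--Wang argument is not used and is not needed.
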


\bigskip

\subsection{Basic algorithmic methods}\label{ssec:prelimAlgMethods1}
Our proof will use the algorithmic methods which have recently been applied to geometric measure theory \cite{AltBusWil25,BusFie25,Lutz21, LutQiYu24, LutLut18, LutLutMay23Ext}. In particular, we rely on the techniques introduced by the authors in \cite{CsoStu25}. We refer the reader to Section 2.2. of \cite{CsoStu25} for a summary of the fundamental definitions, notations and results central to this research area. In this section, we present only the specific techniques and prior results necessary for our arguments that were not already introduced in Section 2.2 of \cite{CsoStu25}. 

Let $x \in [0,1]^{d}$. It will be important for us to understand the behavior of the complexity function $K_n(x)$ as a function of $n$. Note that this function is monotone and $d$-Lipschitz, up to a logarithmic error. It is an easy exercise that this implies that we can approximate any complexity function by a monotone, $(d+1)$-Lipschitz, piecewise linear function $f: [0,\infty) \to [0, \infty)$ such that
\begin{equation}
    \left| f(n) - K_n(x) \right| \lesssim \log n,
\end{equation}
for every $n\in\N$.

In the proof of our distance set theorem, we will discuss the complexity of unit vectors in $\R^d$. We will use the standard fact that, for every dyadic scale $\delta$, we can cover $\S^{d-1}$ by $\delta$-caps with bounded overlap, which we denote $\D_\delta(\S^{d-1})$. For every scale, we can fix a representative element $e$ for every $\delta$-cap. Moreover, each representative $e$ is computable. Therefore, if $e$ is any representative vector at scale $2^{-m}$, then, for every $n\geq m$, $K_{n,m}(e\mid e) \approx 0$. Additionally, if $e^\prime$ is any direction, and $m$ is any precision, given $e^\prime$ with precision $m$, we can computably assign to $e^\prime$ a representative $e$ which agrees with $e^\prime$ up to precision $m$.

Recall that the \textit{symmetry of information}, proved in \cite{LutStu20}, says that
\begin{equation*}
    K^A_{n,m}(x, y) \approx K^A_{n, m}(x\vert y) + K^A_m(y)
\end{equation*}
for every oracle $A$, $x\in\R^{d_1}, y\in\R^{d_2}$ and precisions $n,m \in \N$. We will use several simple consequences of the symmetry of information throughout the paper. The first is that a point $x$ doesn't give any information about a point $y$ if and only if $y$ doesn't give information about $x$. In particular, we have the following. Let $A$ be an oracle, $x, y \in \R^d$, $\eps > 0$. Then $K^A_{m,n}(y \mid x) \geq K^A_m(y) - \eps n$ iff $K^A_{n,m}(x \mid y) \gtrapprox K^A_n(x) - \eps n$. Additionally, $K^A_{k,n}(y \mid x) \gtrapprox K^A_k(y) - \eps n$ for every $k\le m$. 

We will also routinely use the fact that, if a set $S$ is enumerable then all elements of $S$ have complexity, up to logarithmic error, at most $\log |S|$. By an enumerable set, we mean that there is an algorithm that can list the elements of this set. This algorithm does not need to stop after finitely many steps and know that it already finished writing $S$.  Moreover, $\gtrsim 2^{-m}|S|$ many elements of $S$ have complexity greater than $\log |S| - m$ for every $m$. A very useful consequence of this fact is the following. Let $x$ be a point of high complexity, and suppose $x$ is contained in an enumerable set. Then, if $x$ has a computable property, then many other points of that set must also have that property. 

\subsection{Algorithmic incidence results}

In this section, we present some tools that were developed in \cite{CsoStu25, FieJin26, LutStu20}. We state and prove versions of these results needed in this paper. 

We will need the following proposition that relates the complexity of a point with $(\delta, s, c)$-sets. Its proof is largely identical to the proof of Proposition 3.4 in \cite{CsoStu25}, we include it for completeness.
\begin{proposition}\label{prop:enumerableSset}
    Let $A$ be an oracle, $0<t\leq d$ and $0 < \eta \leq 1$. Then for every sufficiently large $n$, depending on $t$ and $\eta$, the following holds.
    
    Let $x\in\R^d$, and suppose that
    \begin{equation*}
        K^A_m(x) \geq tm - \eta n
    \end{equation*}
    for every $0 \leq m \leq n$.
    
    If $\P\subseteq \D_{2^{-n}}(\R^d)$ is enumerable given $w$, relative to $A$, and $\P$ contains the $2^{-n}$-cube containing $x$, then it has a  $(2^{-n}, t, c)$ subset $\P'$ such that $c= 2^{O(\eta n)+|w|}$ and
    \begin{equation*}
        \vert \P^\prime\vert \geq 2^{K^A_n(x)-O( \eta^{-1}\log n) -|w|}.
    \end{equation*}
    In particular, if $|w| = O(\log n)$, then $\vert \P^\prime\vert \ge 2^{K^A_n(x) - O(\eta^{-1} \log n)}$ and $c = 2^{O(\eta n)}$.
\end{proposition}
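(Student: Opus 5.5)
The approach is the standard pruning argument from Proposition 3.4 of \cite{CsoStu25}: start from the enumerable set $\P$, which contains the cube of $x$, and repeatedly throw away cubes lying in overly heavy dyadic cubes at coarser scales. Every removal step is described by a short program, so the cube containing $x$ is never removed. Otherwise $x$ would lie in a small enumerable set, and that would contradict the lower bound $K^A_m(x)\ge tm-\eta n$.

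\textbf{Step 1 (coarse scales).} Fix a step size $\ell\approx \eta n/\log n$, or more simply restrict attention to the scales $m$ that are multiples of $\lceil \eta n\rceil$. There are then only $O(\eta^{-1})$ relevant scales, which accounts for the $\eta^{-1}$ factor in the error term. Choose a threshold $N := 2^{K^A_n(x)-C\eta^{-1}\log n-|w|}$. Run the enumeration of $\P$ given $w$ and $A$. Whenever a dyadic cube $Q\in\D_{2^{-m}}$ at a relevant scale is seen to contain more than $2^{-tm}\cdot 2^{C\eta n+|w|}\cdot|\P|$ cubes, flag it as heavy. Since $|\P|$ is not known exactly, work with a guessed value $N_0\approx|\P|$ instead: $\log N_0$ costs only $O(\log n)$ bits to describe.

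\textbf{Step 2 (the cube of $x$ is not heavy).} Given $w$, $N_0$ and $m$, the heavy cubes at scale $m$ form an enumerable set. Each heavy cube holds a $2^{-tm+C\eta n+|w|}$ fraction of the mass, so there are at most $2^{tm-C\eta n-|w|}$ of them. If the $2^{-m}$-cube containing $x$ were heavy, we would get
\[
K^A_m(x)\le tm-C\eta n-|w|+|w|+O(\log n)< tm-\eta n,
\]
which is a contradiction for large $C$. So $x$'s cube survives at every relevant scale.

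\textbf{Step 3 (iterate and pass to intermediate scales).} Remove all cubes lying in heavy cubes and iterate. Removal can create new heavy cubes relative to the new total, but the total only drops. If it dropped below roughly $N$, then $x$'s cube would lie in an enumerable set of size less than $N$, whose description costs $|w|+O(\eta^{-1}\log n)$ bits: one $O(\log n)$-bit guess per round and per scale. That would give $K^A_n(x)<\log N+|w|+O(\eta^{-1}\log n)$, contradicting the choice of $N$. The process stabilizes after $O(\eta^{-1})$ rounds and yields a set $\P'$ with $|\P'|\ge N$ that is non-concentrated at all relevant scales. For an arbitrary scale $m$, round down to the nearest relevant scale $m'$ with $m-m'\le\eta n$; this loses a factor $2^{t\eta n}$, which is absorbed into $c=2^{O(\eta n)+|w|}$.

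The main obstacle is the bookkeeping in Step 3. One has to ensure that the iterated pruning stays enumerable with only $O(\eta^{-1}\log n)$ bits of advice, that the number of rounds stays bounded, and that each round does not shrink the set too much. I would handle this as in \cite{CsoStu25}: guess the final size of $\P'$ up to a factor of $2$ in advance, and prune greedily in a single pass from coarse scales to fine scales.
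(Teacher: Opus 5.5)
There is a genuine gap in Step 3, precisely at the point you defer as bookkeeping.

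The set $\P_1$ obtained from $\P$ by deleting every cube lying in a heavy cube is \emph{not} enumerable. A cube of $\P$ can turn out to lie in a heavy cube at an arbitrarily late stage of the enumeration, so no cube can ever be certified as a survivor. $\P_1$ is only a difference of two enumerable sets, and such sets can be tiny yet contain complex elements. For example, $\{u\le N_k\}\setminus\{u<N_k\}=\{N_k\}$, where $N_k$ is the number of programs of length $<k$ that halt: both sets are enumerable from $k$, yet $K(N_k)\ge k-O(\log k)$. So the inference ``if $|\P_1|<N$ then $x$'s cube lies in an enumerable set of size $<N$, hence $K^A_n(x)$ is small'' is unjustified. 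For the same reason Step 2 cannot be rerun in later rounds, since the heavy cubes of $\P_1$ are no longer enumerable.

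Separately, nothing supports the claim that the iteration stops after $O(\eta^{-1})$ rounds. A round may shrink the total by only a modest factor, after which new cubes become heavy relative to the smaller total, and each extra round costs another $O(\log n)$-bit guess. Guessing the final size up to a factor $2$ and pruning scale by scale does not fix this either, because the decisions still depend on counts that are known only once the enumeration is complete.

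The paper avoids pruning altogether. It takes $\P'$ to be the set of $Q\in\P$ with $K^A(Q)\le K^A_n(x)$ and $K^A_{n,m}(Q\mid Q)\le K^A_{n,m}(x\mid x)$ for every $m\le n$ that is a multiple of $\lfloor\eta n\rfloor$. This set is enumerable from $w$ together with $O(\eta^{-1})$ numbers of $O(\log n)$ bits, and it contains $x$'s cube, which gives $|\P'|\ge 2^{K^A_n(x)-O(\eta^{-1}\log n)-|w|}$. Non-concentration is then automatic: a cube in $\D_{2^{-m}}$ contains $\lesssim 2^{K^A_{n,m}(x\mid x)+O(\log n)}$ such cubes, and by symmetry of information $K^A_{n,m}(x\mid x)\approx K^A_n(x)-K^A_m(x)\le K^A_n(x)-tm+\eta n$.

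If you want to keep your heavy-cube idea, the decisions must be made online and never revoked. Fix $N'=2^{K^A_n(x)-|w|-C\log n}$ and $\lambda=2^{C\eta n+|w|}$ in advance. Accept each cube at the moment it is enumerated, unless some dyadic ancestor in $\D_{2^{-m}}$ already contains $\lambda 2^{-tm}N'$ accepted cubes. The accepted set is then enumerable, and there are two cases.
\begin{itemize}
\item If it has fewer than $N'$ elements, consider $x$'s cube. If that cube was accepted, this contradicts the choice of $N'$. If it was rejected, some ancestor at a scale $m$ was full. The full cubes at scale $m$ form an enumerable set of fewer than $2^{tm}/\lambda$ cubes, contradicting $K^A_m(x)\ge tm-\eta n$.
\item If it has at least $N'$ elements, it is a $(2^{-n},t,O(\lambda))$-set by construction.
\end{itemize}
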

\begin{proof} 
We define $\P'$ to be the set of those cubes $Q$ in $\P$ for which $$K^A(Q)\leq K^A_n(x),\,\text{ and }\,K^A_{n,m}(Q\mid Q)\leq K^A_{n,m}(x\mid x)$$ for every $m\le n$ which is an integer multiple of $\lfloor \eta n\rfloor$. 
 
    Note that $\P^\prime$ is enumerable given $w$, given the numbers $K^A_{n,m}(x\mid x)$ for each integer multiple $m$ and given $K^A_{n}(x)$. Moreover, $Q(x) \in \P^\prime$, where $Q(x)$ is the $2^{-n}$-dyadic cube containing $x$. Therefore, 
    \begin{align*}
        K^A_n(x) &\lessapprox \log \vert \P^\prime \vert + \log K^A_n(x) + \sum\limits_{m} \log K^A_{n,m}(x\mid x)+|w|\\
        &\leq \log\vert \P^\prime \vert  + O(\eta^{-1}\log n)+|w|.
    \end{align*}
 
    Hence, 
    \begin{equation*}
        \vert \P^\prime\vert \geq 2^{K^A_n(x)-O(\eta^{-1}\log n)-|w|}
    \end{equation*}
    as required. Moreover, since there are at most $2^{K^A_n(x) + O(\log n)}$ many cubes $Q$ such that $K^A(Q)\leq K^A_n(x)$, we see that $$|\P'|\le 2^{K^A_n(x) + O(\log n)}.$$

    Let $m \leq n$ be an integer multiple of $\lfloor \eta n\rfloor$. Now let $Q\in \D_{2^{-m}}$. By our definition of $\P^\prime$, if $Q'\in\P'$ is a subcube of $Q$, then $K^A_{n,m}(Q'\mid Q)=K^A_{n,m}(Q'\mid Q')\leq K^A_{n,m}(x\mid x)$. For any fixed $Q\in \D_{2^{-m}}$, the number of subcubes $Q^\prime \in \D_{2^{-n}}(Q)$ such that $K^A_{n,m}(Q^\prime \mid Q) \lessapprox K^A_{n,m}(x\mid x)$ is $\lesssim 2^{K^A_{n,m}(x\mid x) + O(\log n)}$. Therefore, using symmetry of information, the number of subcubes is at most
     $$2^{K^A_{n, m}(x\mid x)+O(\log n)}\leq 2^{K^A_n(x) - K^A_{m}(x)+O(\log n)}\le \vert \P^\prime\vert 2^{-tm+\eta n +O(\eta^{-1}\log n)+|w|}.$$

Since this holds for every $m$ which is an integer multiple of $\lfloor \eta n\rfloor$, and since between $m$ and $m+\lfloor \eta n\rfloor$ the number of cubes can be multiplied by at most a factor $2^{O(\eta n)}$, the statement indeed follows with $c=2^{O(\eta n)+O(\eta^{-1}\log n)+|w|}=2^{O(\eta n)+|w|}$.
\end{proof}

The following technical proposition allows us to find `random' points of $(\delta, s, c)$-sets with nice properties in terms of the algorithmic methods.
\begin{proposition}\label{prop:sSetToComplexities}
    Let $\delta=2^{-n}$ be a dyadic scale, $c > 0$, $s > 0$. Suppose that $\P\subseteq \D_\delta([0,1]^d)$ is a $(\delta, s, c)$-set. Let $A$ be any oracle such that $c, s$ and $\P$ are computable. Then for every oracle $B$ there is a subset $\P^\prime \subseteq \P$ such that $|\P^\prime| \gtrsim |\P|$ and for every $Q\in \P^\prime$ the following hold.
    \begin{enumerate}
        \item[(i)] $K^{A}_n(Q) \approx \log \vert \P \vert$.
        \item[(ii)] $K^A_m(Q) \gtrapprox sm - O(\log c)$ for all $m \leq n$.
        \item[(iii)] $K^{A,B}_m(Q) \approx K^A_m(Q)$ for all $m\leq n$.
    \end{enumerate}
\end{proposition}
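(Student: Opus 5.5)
We plan a counting argument that discards the few cubes of $\P$ failing any of (i)–(iii), showing each bad set is a small fraction of $|\P|$.

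\textbf{Upper bound in (i).} Since $\P$ is computable from $A$, it is enumerable relative to $A$ with no extra advice. Hence every $Q\in\P$ satisfies $K^A_n(Q)\le \log|\P|+O(\log n)$.

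\textbf{Lower bound in (i).} At most $2^{\log|\P|-C\log n}$ strings have length below $\log|\P|-C\log n$. So all but a $n^{-C}$ fraction of $Q\in\P$ have $K^A_n(Q)\ge \log|\P|-C\log n$. This is the "many elements of an enumerable set have high complexity" fact from Section \ref{ssec:prelimAlgMethods1}.

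\textbf{Condition (ii).} Fix $m\le n$. The projections of $\P$ to scale $2^{-m}$ form a set $\P_m$ that is computable from $A$ and $m$. Let $\mathcal{B}_m$ be the cubes $R\in\P_m$ with $K^A_m(R)< sm-\log c-C\log n$. Then $|\mathcal{B}_m|\le 2^{sm}c^{-1}n^{-C}$. By the $(\delta,s,c)$-set property, each $R$ contains at most $c2^{-sm}|\P|$ cubes of $\P$. So the cubes of $\P$ lying in some $R\in\mathcal{B}_m$ number at most $n^{-C}|\P|$. Summing over the $n+1$ values of $m$ removes at most $(n+1)n^{-C}|\P|$ cubes, which is negligible for $C\ge 2$. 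This gives $K^A_m(Q)\ge sm-\log c-O(\log n)$, i.e.\ (ii). Here we use that $K_m(Q)$ for the $\delta$-cube $Q$ agrees with the complexity of its ancestor at scale $2^{-m}$ up to $O(\log n)$.

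\textbf{Condition (iii).} This is the main obstacle, since $B$ is arbitrary and we need the estimate at every scale simultaneously. The inequality $K^{A,B}_m\lessapprox K^A_m$ is trivial, so we need $K^{A,B}_m(Q)\gtrapprox K^A_m(Q)$. We use the already established properties together with a second counting step.

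The idea is to apply the count scale by scale to a uniform subset. First pigeonhole $\P$ (losing only $(\log)^{O(1)}$ factors, which is harmless for a $\gtrsim$ bound only if we restrict carefully) into cubes whose numbers of descendants are roughly uniform at each dyadic scale. Then the values $K^A_m(Q)$ are essentially determined, $\approx\log|\P_m|$, by the argument of (i) applied to $\P_m$.

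At each scale $m$, at most $2^{\log|\P_m|-C\log n}$ cubes $R\in\P_m$ have $K^{A,B}_m(R)<\log|\P_m|-C\log n$. By uniformity, these cubes carry at most a $n^{-C}$ fraction of $\P$. Summing over $m$ again removes a negligible fraction.

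The delicate point is the uniformization. Pigeonholing into branching types would lose more than a constant fraction. To avoid this, we instead bound the bad mass directly. We want to show that the set of $Q$ with $K^{A,B}_m(R)<K^A_m(R)-C\log n$ has small measure. We do this by counting pairs $(R,k)$ with $k=K^A_m(R)$: for each value $k$, there are at most $2^{k-C\log n}$ cubes with low $(A,B)$-complexity, and at most about $2^{k}$ cubes with $K^A_m=k$. We then weight by the number of descendants of each $R$ in $\P$, using (i)-type reasoning for conditional complexity: descendants of $R$ with large $K^A_{n}$ force $R$'s $A$-complexity to reflect its mass share.

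We expect this mass-weighted count to be the step that needs the most care. The remaining estimates are routine union bounds.
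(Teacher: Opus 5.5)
Your treatment of (i) and (ii) is the same as the paper's. The upper bound in (i) comes from $A$-computability of $\P$ and the lower bound from counting short programs. For (ii) you bound the number of low-complexity $2^{-m}$-cubes, multiply by the $(\delta,s,c)$ bound $c2^{-sm}|\P|$ on each, and take a union bound over $m$. For (iii) the paper gives no argument of its own; it cites Lemma 12 of \cite{Stull26Opt}. Your direct attempt at (iii) is not finished, and as written it has a gap at exactly the step you flag.

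Write $N(R)=|\{Q\in\P: Q\subseteq R\}|$ for $R\in\D_{2^{-m}}$. Your grouping by $k=K^A_m(R)$ only works if every such $R$ carries at most a $2^{-k+O(\log n)}$ fraction of $\P$, i.e.\ $N(R)\le 2^{-K^A_m(R)+O(\log n)}|\P|$. The mechanism you describe does not give this. A descendant $Q$ of $R$ with large $K^A_n(Q)$, combined with symmetry of information, gives $\log|\P|\approx K^A_n(Q)\lessapprox K^A_m(R)+\log N(R)$, that is, $K^A_m(R)\gtrapprox\log(|\P|/N(R))$. This is a lower bound on the mass of $R$, which is useless for bounding the bad mass from above.

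The inequality you need is the reverse one, and it holds for every $R$ by a plain enumeration. The set $\{R'\in\D_{2^{-m}}: N(R')\ge 2^{-k}|\P|\}$ is computable from $A$, $m$ and $k$, has at most $2^k$ elements, and contains $R$ when $k=\lceil\log(|\P|/N(R))\rceil$. Hence $K^A_m(R)\le\log(|\P|/N(R))+O(\log n)$.

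With this, your count closes. At most $2^{k-C\log n}$ cubes $R$ have $K^{A,B}_m(R)<k-C\log n$. Each cube with $K^A_m(R)=k$ has $N(R)\le 2^{-k+O(\log n)}|\P|$. Summing over the $O(n)$ values of $k$ and the $n+1$ values of $m$ removes only $O(n^{2+O(1)-C})|\P|$ cubes, which is negligible for $C$ large.

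Equivalently, compare both $K^A_m$ and $K^{A,B}_m$ to the $A$-computable quantity $\log(|\P|/N(Q_m))$. The upper bound for $K^A_m$ is the enumeration above. The lower bound for $K^{A,B}_m$, outside a set of small mass, is a counting (Kraft-type) argument that does not depend on $B$. The uniformization detour is unnecessary.
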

\begin{proof}
    There are at most $2^{\log |\P| - \log n}$ many dyadic cubes of complexity less than $\log |\P| - \log n$, that is, the size of the set of those cubes $Q$ for which $K^A_n(Q) < \log |\P| - \log n$ is at most $|\P|/n$. And since $\P$ is computable, relative to $A$, therefore $K^A_n(Q) \lessapprox \log |\P|$ for every $Q\in\P$. So (i) holds for every $Q\in\P$ outside of a set of size at most $|\P|/n$.
    
    For every $m \leq n$, let 
    \begin{equation*}
        N_m = \{Q\in \D_{2^{-m}} \mid K^A_m(Q) < sm - \log c - 2\log m -3\}.
    \end{equation*}
    Then the cardinality of $N_m$ is at most $2^{sm - \log c - 2\log m - 3}$. Since $\P$ is a $(\delta, s, c)$-set, the set 
    \begin{equation*}
        \{Q \in \P\mid K^A_m(Q)< sm - \log c - 2\log m-3\} 
    \end{equation*}
    has cardinality at most 
    \begin{equation*}
        |N_m|\, c 2^{-sm}|\P| \leq |\P|/(8m^2).
    \end{equation*}
    Summing over $m$, we conclude that (ii) holds for at least $3|\P|/4$ many cubes in $\P$. 
    
    To prove the final conclusion, let $B$ be any oracle. It was shown in \cite{Stull26Opt}, Lemma 12, that there are at most $|\P|/4$ many cubes $Q\in \P$ for which there is an $m\le n$ with $K^{A,B}_m(Q) < K^A_m(Q) - 2\log m - 2$.
\end{proof}

The following projection lemma was essentially proved by Fiedler and Jing (Theorem 24 of \cite{FieJin26}), and will be used in proving the discretized slicing and projection results in Section \ref{sec:discretizedSlicingProjection}. We denote the set of unit vectors in $\R^d$ by $\S^{d-1}$. For any $V \in G(d, k)$, we denote the orthogonal projection of $\R^d$ onto $V$ by $\pi_V$. When projecting onto one-dimensional lines, we also denote the orthogonal projection map by $p_e$, where $e$ is the direction of the line.

\begin{lemma}\label{lem:algSlicing}
    Let $s > 0$, $\eps > 0$ such that $s,\eps\in\Q$. Then, for every sufficiently large $n$ the following holds. If $A$ is an oracle, $x \in \R^d$ and $e\in \mathcal{S}^{d-1}$ satisfy
    \begin{enumerate}
        \item[(i)] $K^A_m(e) \gtrapprox (d-1)m -\eps n$, for all $m \leq n$,
        \item[(ii)] $K^{A,e}_m(x) \gtrapprox K^A_m(x) - \eps n$, for all $m\leq n$,
        \item[(iii)] $K^A_m(x) \gtrapprox sm - \eps n$, for all $m\leq n$,
    \end{enumerate}
    then, for every $m\leq n$,
    \begin{equation*}
        K^{A,e}_{m}(\pi_{e^\perp}\, x) \gtrapprox \min\{sm, (d-1)m\} - O(\eps n).
    \end{equation*}
    
    Moreover, if $s > 1$, then, for every $m\leq n$,
    \begin{equation*}
        K^{A,e}_{n,n,m}(x\mid p_e x, x) \lessapprox K^A_{n}(x) - n -(s-1)m + O(\eps n).
    \end{equation*}
    
\end{lemma}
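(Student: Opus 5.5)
The first claim is a projection statement for a point that is random relative to a direction. I would prove it with the standard point-to-line / Lutz--Stull machinery, as in Theorem 24 of \cite{FieJin26}. Fix $m\le n$ and set $z=\pi_{e^\perp}x$.

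\textbf{Lower bound for $K^{A,e}_m(z)$.} By symmetry of information relative to $(A,e)$,
\[
K^{A,e}_m(x)\approx K^{A,e}_m(z)+K^{A,e}_{m,m}(x\mid z),
\]
and (ii) gives $K^{A,e}_m(x)\gtrapprox K^A_m(x)-\eps n$. So it suffices to show that the fiber term is small:
\[
K^{A,e}_{m,m}(x\mid z)\lessapprox \max\{K^A_m(x)-(d-1)m,\;0\}+O(\eps n).
\]

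\textbf{Enumeration step.} Given $e$ and $z$ to precision $m$, the point $x$ lies on the line $z+\R e$. Enumerate the candidates $y$ on this line, at precision $m$, whose complexity profile is no larger than that of $x$, namely $K^A_r(y)\le K^A_r(x)+\eps n$ at sample scales $r$. Any other candidate $y$ with $|x-y|\approx 2^{-r}$ lies on the same line, so $e$ is determined to precision $\approx m-r$ from $x$ and $y$. The usual counting then gives
\[
K^A_{m}(y)\gtrapprox K^A_r(x)+K^A_{m-r}(e)+K^A_{m,r}(x\mid x)-\text{(error)}.
\]
By (i) and (iii), this exceeds $K^A_m(x)$ unless $(d-1)(m-r)\lesssim K^A_m(x)-K^A_r(x)$. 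Consequently the number of competitors is $\lesssim 2^{\max\{K^A_m(x)-(d-1)m,0\}+O(\eps n)}$. If $K^A_m(x)$ is too large, so that this would not give a contradiction, I would lower the target complexity by the usual trick: find an oracle $D$ that reduces $K^{A,D}_r(x)$ to $\min\{K^A_r(x),(d-1)r\}$ while preserving the hypotheses (Lutz--Stull \cite{LutStu20}). With (iii) this yields $K^{A,e}_m(z)\gtrapprox\min\{s,d-1\}m-O(\eps n)$.

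\textbf{The ``moreover'' part.} Write the left side as $K^{A,e}_{n,n,m}(x\mid p_e x, x)$, i.e.\ the cost of $x$ at precision $n$ given $p_e x$ at precision $n$ and $x$ at precision $m$. Given $x$ at precision $m$ and the scalar $p_ex$ at precision $n$, the point $x$ is determined by its $(d-1)$-dimensional coordinate $z$ at precision $n$ within a $2^{-m}$-ball. So the quantity is bounded by
\[
K^{A,e}_{n,m}(z\mid x)\approx K^{A,e}_n(z)-K^{A,e}_m(z)\lessapprox K^{A,e}_n(x)-n-K^{A,e}_m(z).
\]
The last inequality uses $K^{A,e}_n(x)\approx K^{A,e}_n(z)+K^{A,e}_{n,n}(p_ex\mid z)$ together with the bound $K^{A,e}_{n,n}(p_ex\mid z)\gtrapprox n-O(\eps n)$. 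I would establish that bound by the same enumeration argument, exchanging the roles of the two coordinates: $p_ex$ is essentially random given $z$, because $s>1$ means $x$ must have one full dimension along the fiber. Inserting the first part with $s>1$ gives
\[
K^{A,e}_m(z)\gtrapprox \min\{s,d-1\}m\ge (s-1)m,
\]
and replacing $K^{A,e}_n(x)$ by $K^A_n(x)$ via (ii) yields the stated bound.

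\textbf{Main obstacle.} The hardest step is the enumeration argument. One needs a multiscale Lemma-type bound where the competitor $y$ is close to $x$, with uniform $O(\eps n)$ losses over all $m\le n$, and this forces partitioning $[0,m]$ into $O(1/\eps)$ blocks as in Fiedler--Jing. For this I would cite and adapt their proof directly rather than reprove it.
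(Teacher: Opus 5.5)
Your ``moreover'' argument has a genuine gap: it rests on a false claim. After bounding $K^{A,e}_{n,n,m}(x\mid p_ex,x)$ by $K^{A,e}_{n,m}(z\mid x)$, you need $K^{A,e}_{n,n}(p_ex\mid z)\gtrapprox n-O(\eps n)$, i.e.\ $K^{A,e}_n(z)\lessapprox K^{A,e}_n(x)-n$. This confuses the two fibres. The hypothesis $s>1$ concerns the level sets of $p_e$, which are the hyperplanes $x+e^\perp$. It makes the scalar $p_ex$ itself nearly random (complexity $\approx n$ relative to $(A,e)$). It says nothing like that about $p_ex$ given $z=\pi_{e^\perp}x$, whose fibre is the line $x+\R e$. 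Your own first part gives $K^{A,e}_n(z)\gtrapprox\min\{s,d-1\}n-O(\eps n)$, so your claim would force $\min\{s,d-1\}n\lessapprox K^A_n(x)-n+O(\eps n)$. That fails whenever $K^A_n(x)\approx sn$ and $s<d$. For instance, if $K^A_r(x)\approx sr$ for all $r$ with $1<s\le d-1$, then $p_ex$ is essentially determined by $z$. In that case $K^{A,e}_{n,m}(z\mid x)$ is about $s(n-m)$, while the target is $(s-1)(n-m)$: dropping the conditioning on $p_ex$ in your first step already loses exactly the $n-m$ bits you need, and they cannot be recovered afterwards.

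The missing idea is a slicing version of the enumeration argument, which is what the paper uses. Take $D$ from Lemma~\ref{lem:oracles} capping $x$ at $\eta n=n+(s-1)m-6\eps n$. Consider a competitor $y$ of no larger $(A,D)$-complexity, lying in the $2^{-m}$-cube of $x$, with $p_ey=p_ex$ and $|x-y|\approx 2^{-t}$. Such a $y$ lies in $x+e^\perp$, so $x,y$ determine only a direction in $e^\perp$. Recovering $e$ to precision $n-t$ then costs $(d-2)(n-t)$ extra bits, against $K^A_{n-t,n}(e\mid x)\gtrapprox(d-1)(n-t)-2\eps n$. This gives $K^{A,D}_t(x)\lessapprox(s-1)m+t-4\eps n$. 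Since $t\ge m-O(1)$, (iii) excludes the uncapped case, and the capped case forces $t\ge n-2\eps n$. Hence $K^{A,D,e}_{n,n,m}(x\mid p_ex,x)\lessapprox O(\eps n)$, and Lemma~\ref{lem:oracles}(iv) adds back $K^A_n(x)-\eta n$.

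Your first part follows the paper's strategy, but the reduction you state is too strong. The bound $K^{A,e}_{m,m}(x\mid z)\lessapprox\max\{K^A_m(x)-(d-1)m,0\}+O(\eps n)$ would give $K^{A,e}_m(z)\gtrapprox\min\{K^A_m(x),(d-1)m\}$, which is false. Take $d=2$, $s<1$, and let $K^A_r(x)$ have slope $s$ on $[0,m/2]$ and slope $2$ on $[m/2,m]$; then $K^{A,e}_m(z)\lessapprox(1+s)m/2<m=\min\{K^A_m(x),m\}$. Your counting inequality should also read $K^A_m(y)\gtrapprox K^A_r(x)+K^A_{m-r,m}(e\mid x)$, without the extra $K^A_{m,r}(x\mid x)$.

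The oracle must cap at the single value $\eta m=\min\{s,d-1\}m-6\eps n$ dictated by the uniform bound (iii), not at $\min\{K^A_r(x),(d-1)r\}$ scale by scale. The correct fibre bound is then $K^{A,e}_m(x\mid z)\lessapprox K^A_m(x)-\eta m+O(\eps n)$, obtained from $K^{A,D,e}_m(x\mid z)\lessapprox O(\eps n)$ via Lemma~\ref{lem:oracles}(iv). Finally, no Fiedler--Jing block decomposition is needed: (i)--(iii) hold at every scale with the same $\eps n$ slack, so one oracle at the single precision $m$ suffices.
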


For completeness, we will give a proof of Lemma \ref{lem:algSlicing}, which largely follows Lemma 4.2 of \cite{LutStu24} and Theorem 24 of \cite{FieJin26}. We will need the following auxiliary lemma, which shows that we can use an oracle to lower the complexity of points at specified precisions using only the information already present in the point, i.e., without affecting the complexity of other objects. 
\begin{lemma}[\cite{LutStu24}, Lemma 3.4]\label{lem:oracles}
Let $A$ be an oracle, $x\in\R^d$, $\eta>0$, $\eta\in\Q$, and $n\in\N$. Suppose that $\eta n \leq K^A_n(x)$. Then there is an oracle $D = D(n, \eta)$ with the following properties.
\begin{itemize}
\item[\textup{(i)}] For every $m\leq n$,
\[K^{A,D}_m(x)=\min\{\eta n,K^A_m(x)\}+O(\log n)\,.\]
\item[\textup{(ii)}] For every $d^\prime,m\in\N$ and $y\in\R^{d^\prime}$,
\[K^{A,D}_{m,n}(y\mid x)=K^A_{m,n}(y\mid x)+ O(\log n)\,,\]
and
\[K_m^{A,x,D}(y)=K_m^{A,x}(y)+ O(\log n)\,.\]
\item[\textup{(iii)}] If an oracle $B$ satisfies $K^{A,B}_n(x) \geq K^A_n(x) - O(\log n)$, then \[K_n^{A,B,D}(x)\geq K_n^{A,D}(x) - O(\log n)\,.\]
\item[\textup{(iv)}] For every $m\in\N$, $u\in\R^d, w\in\R^{d^\prime}$
\[K^A_{n,m}(u\mid w) \leq K^{A,D}_{n,m}(u\mid w) + K^A_n(x) - \eta n + O(\log n)\,.\]
\end{itemize}

\end{lemma}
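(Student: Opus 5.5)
The plan is to take $D$ to be a \emph{finite} string that encodes exactly the information in $x$ between a suitable intermediate precision $r$ and precision $n$. Concretely, let $r\le n$ be the largest precision with $K^A_r(x)\le \eta n$. Since $K^A_m(x)$ is monotone in $m$ and Lipschitz up to logarithmic error, and $K^A_n(x)\ge \eta n$, this gives $K^A_r(x)=\eta n+O(\log n)$. Then let $D$ be a shortest program, relative to $A$, that outputs $x$ at precision $n$ when given $x$ at precision $r$. Thus
\[
|D|=K^A_{n,r}(x\mid x)\approx K^A_n(x)-K^A_r(x)\approx K^A_n(x)-\eta n
\]
by symmetry of information. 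Because $D$ is a shortest program, it is incompressible given $x\restriction r$: that is, $K^A(D\mid x\restriction r)\gtrapprox |D|$. Also, $D$ is computable from $x\restriction n$ together with $O(\log n)$ bits (for instance, the value $K^A_{n,r}(x\mid x)$), because we can search for the first shortest program.

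I would verify the four properties in the following order.

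\textbf{(iv)} is immediate: any program relative to $(A,D)$ can be simulated relative to $A$ by also supplying $D$ explicitly, so $K^A_{n,m}(u\mid w)\le K^{A,D}_{n,m}(u\mid w)+|D|+O(\log n)$.

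\textbf{(ii)} follows because, given $x$ at precision $n$ (or $x$ as an oracle), $D$ is recoverable with $O(\log n)$ extra bits, so adding $D$ to the oracle changes these conditional complexities only by $O(\log n)$.

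\textbf{(i)}. For $m\ge r$ the upper bound is $K^{A,D}_m(x)\le K^{A,D}_n(x)+O(\log n)\le K^A_r(x)+O(\log n)=\eta n+O(\log n)$: describe $x\restriction r$ and then run $D$. For $m\le r$ the upper bound is trivial, since adding an oracle never increases complexity. For the lower bound when $m\le r$, I would apply symmetry of information to the pair $(x\restriction m, D)$:
\[
K^{A,D}_m(x)\approx K^A(x\restriction m, D)-K^A(D)\ge K^A_m(x)+K^A(D\mid x\restriction m)-|D|-O(\log n).
\]
The bound $K^A(D\mid x\restriction m)\ge K^A(D\mid x\restriction r)-O(\log n)\gtrapprox |D|$ then gives $K^{A,D}_m(x)\gtrapprox K^A_m(x)$. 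For $m>r$ the lower bound follows by monotonicity, since $K^{A,D}_m(x)\gtrapprox K^{A,D}_r(x)\approx \eta n$.

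\textbf{(iii)}. If $K^{A,B}_n(x)\ge K^A_n(x)-O(\log n)$, then by the simulation argument used for (iv) but relative to $(A,B)$,
\[
K^{A,B,D}_n(x)\ge K^{A,B}_n(x)-|D|-O(\log n)\ge \eta n-O(\log n)=K^{A,D}_n(x)-O(\log n).
\]

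I expect the main obstacle to be the lower bound in (i) at precisions $m\le r$. This requires the incompressibility of $D$ relative to the \emph{coarser} information $x\restriction m$, and symmetry of information applied to a pair consisting of a point and a finite string. The boundary case $m$ near $r$ also needs care: one must check that the choice of $r$, where $K^A_r(x)\approx\eta n$, keeps all errors at $O(\log n)$ rather than letting a Lipschitz jump of size $d$ accumulate.
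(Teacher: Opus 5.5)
The paper does not reprove this lemma; it imports it from \cite{LutStu24}. Your construction is exactly the one used there: $D$ encodes a shortest witness for $x\restriction n$ given $x\restriction r$, with $r$ maximal such that $K^A_r(x)\le\eta n$, and (i)--(iv) are checked as you propose (incompressibility plus symmetry of information for (i), recoverability of $D$ from $x\restriction n$ for (ii), and $|D|\approx K^A_n(x)-\eta n$ for (iii) and (iv)). One point to tighten: $D$ must be \emph{defined} as the particular witness your dovetailed search returns, which is what Lutz and Stull do by taking the lexicographically first time-minimizing witness, because an arbitrary shortest program need not be computable from $x\restriction n$ with only $O(\log n)$ bits of advice.
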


With this technical lemma, we are able to prove Lemma \ref{lem:algSlicing}.
\begin{proof}[Proof of Lemma \ref{lem:algSlicing}]
    We first prove the moreover claim. Assume that (i), (ii) and (iii) are satisfied, and $s > 1$. Let $\eta\in\Q$ such that $$\eta n = n + (s-1)m - 6\eps n.$$ Let $D$ be the oracle given by Lemma \ref{lem:oracles}. Note that $\eta n \leq K^A_n(x)$ (for $n$ sufficiently large), since, by assumption (iii), $K^A_n(x) \gtrapprox sn - \eps n$, and since $s > 1$, therefore $\eta n\le sn-6\eps n$.
    
    Also note that, by (ii), $K_{n,m}^A(x\mid e)\gtrapprox K^A_n(x)-\eps n$, and using the symmetry of information and (i) this implies that
    \begin{equation}\label{eq:xGivesNoInfoAboute}
        K^A_{r,n}(e\mid x)\gtrapprox K^A_{r}(e) -\eps n\gtrapprox (d-1)r -2\eps n,
    \end{equation}
    for every $r\leq n$.
    
    Let $Q_m$ be the dyadic cube of side length $2^{-m}$ containing $x$. We now show that $x$ is essentially the unique point in $Q_m$ whose projection onto $e$ is $p_e x$. More precisely, we will show that if $y\in Q_m$, $p_e y = p_e x$, and $|x-y|>2^{-n+3\eps n}$, then $K^{A,D}_n(y)> K^{A,D}_n(x)$. 
    
    Suppose that $K^{A,D}_n(y)\le K^{A,D}_n(x)$. It is not difficult to verify that, given $x$ and $y$ with precision $n$, we can compute the direction between $x$ and $y$ with an error at most $2^{-(n-t)}$, i.e. with precision $n-t$, where $t = -\lfloor\log |x-y|\rfloor$ (note that, by our assumption, $n-t\ge 0$.)
    
    Since this direction belongs to $e^{\perp}$, and the space of all hyperplanes that contain a given direction is a $d-2$-dimensional space, therefore, to find $e$ with 
    precision $n-t$, it is sufficient to add $(d-2)(n-t)$ extra information.
    
    In the language of Kolmogorov complexity, this shows that
    \begin{equation}\label{eq:computeEgivenXY}
        K^{A,D}_{n-t, n}(e\mid x) \lessapprox K^{A,D}_n(y\mid x) + (d-2)(n-t).
    \end{equation}
    We note that $K^{A,D}_t(x\mid y) \approx K^{A,D}_t(y\mid x) \approx 0$, since, at scale $2^{-t}$, $x$ and $y$ are contained in $2^{-t}$-dyadic cubes which are close together. Using this fact, and the symmetry of information, we conclude that
    \begin{align*}
        K^{A,D}_{n-t, n}(e\mid x) &\lessapprox K^{A,D}_{n,t}(y\mid x) + (d-2)(n-t)\\
        &\approx K^{A,D}_n(y) - K^{A,D}_t(x)+(d-2)(n-t),
    \end{align*}
    where $$K^{A,D}_n(y) \le K^{A,D}_n(x)\le\eta n=n + (s-1)m - 6\eps n.$$
    Applying Lemma \ref{lem:oracles}(ii) and \eqref{eq:xGivesNoInfoAboute}, we see that
    $$K^{A,D}_{n-t, n}(e\mid x)\gtrapprox K^{A}_{n-t, n}(e\mid x)\gtrapprox(d-1)(n-t)-2\eps n.$$
Putting together these inequalities we get
\begin{align*}        K^{A,D}_t(x) &\le n + (s-1)m - 6\eps n+(d-2)(n-t)-(d-1)(n-t)+2\eps n+O(\log n)\\
&=(s-1)m+t-4\eps n+O(\log n).\label{m}\\
\end{align*}

Recall that $x$ and $y$ belong to the same dyadic cube of size $2^{-m}$, therefore $t \geq m -c$, for some absolute constant $c$. Also recall that by Lemma \ref{lem:oracles}(i), $$K^{A,D}_t(x) = \min\{\eta n, K^A_t(x)\} + O(\log n).$$ If the minimum is $K^A_t(x)$, then we have an immediate contradiction, since $K^A_t(x) \gtrapprox st - \eps n$ by condition (iii), and therefore we would have 
$$st-\eps n\le (s-1)m + t  - 4\eps n +O(\log n)$$ contradicting $t\ge m-c$ (for every large enough $n$).

 If $K^{A,D}_t(x) = \eta n+O(\log n)$, then from our upper estimate of $K^{A,D}_t(x)$ above we get $ t \ge n-2\eps n-O(\log n)$. Therefore $y$ is within $2^{-n+2\eps n+O(\log n)}$ of $x$, contradicting our initial assumption that $|x-y|\ge 2^{-n+3\eps n}$.
 
 Using the fact that $x$ with precision $n$ is, essentially, uniquely described by $Q\in\D_{2^{-m}}$, its projection to $e$, and by $K_n^{A,D}(x)$ (more precisely, by our argument above, there are at most $2^{O(n\eps)}$ many other $y$ with the same properties), a standard enumeration argument (see, e.g., Lemma 3.1 of \cite{LutStu24}) shows that we can find $x$ with precision $n$ with at most an additional $O(\eps n)$ information. That is,

    \begin{equation}
        K^{A,D,e}_{n,n,m}(x \mid p_e x,  x) \lessapprox O(\eps n).
    \end{equation}
    The conclusion follows by Lemma \ref{lem:oracles}(iv), since
    \begin{align*}
        K^{A,e}_{n,n,m}(x \mid p_e x, x)  &\lessapprox K^{A,D,e}_{n,n,m}(x \mid p_e x, x) + K^A_n(x) - \eta n\\
        &\lessapprox K^A_n(x) - n - (s-1)m + O(\eps n), 
    \end{align*}
    as required.

    We now turn to the first claim of Lemma \ref{lem:algSlicing}. The proof is largely identical to the argument above. Let $m \leq n$. We note that we may assume that $sm \geq O(\eps n)$, otherwise the conclusion is trivially satisfied. Let $\eta\in\Q$ such that $$\eta m = \min\{sm, (d-1) m\} - 6\eps n >0.$$ Let $D$ be the oracle given by Lemma \ref{lem:oracles} with, importantly, $n$ replaced by $m$ in the assumptions of the lemma (and $m$ replaced by a smaller integer in its conclusions). Note that we can indeed apply Lemma \ref{lem:oracles}, since by (iii) of Lemma \ref{lem:algSlicing}, $$K_m^A(x)\geq sm - \eps n - O(\log n) \geq \eta m.$$

    We again note that, by (i), (ii) and the symmetry of information (by the same argument as we obtained \eqref{eq:xGivesNoInfoAboute} before), we have
    \begin{equation}\label{eq:modified5}
        K^A_{r,m}(e \mid x) \gtrapprox (d-1)r -2\eps n,
    \end{equation}
    for all $r \leq m \leq n$. Our goal is to show that $x$ is essentially the unique low complexity point, at precision $m$, whose projection onto $e^\perp$ is $\pi_{e^\perp}\, x$.  Let $y\in \R^d$ such that $\pi_{e^\perp} \,y = \pi_{e^\perp}\, x$ and $K^{A,D}_m(y) \leq K^{A,D}_m(x)$. As before, since given $x$ and $y$ with precision $m$, we can compute the direction between $x$ and $y$ with an error a $2^{-(m-t)}$ (provided that $t: = \lfloor-\log |x-y|\rfloor\le m$), therefore
    \begin{equation}\label{eq:computeEgivenXY2}
        K^{A,D}_{m-t, m}(e\mid x) \lessapprox K^{A,D}_m(y\mid x).
    \end{equation}
    Via the symmetry of information, and using the fact that $K_{t}(x\mid y) \approx K_{t}(y\mid x) \approx 0$, we conclude that
    \begin{align*}
        K^{A,D}_{m-t, m}(e\mid x) &\lessapprox K^{A,D}_{m,t}(y\mid x)
        \approx K^{A,D}_m(y) - K^{A,D}_{t}(x)
        \lessapprox \eta m - K^{A,D}_{t}(x).
    \end{align*}
    By \eqref{eq:modified5}, Lemma \ref{lem:oracles}(ii), and rearranging the above inequality, we see that
    \begin{align*}
        K^{A,D}_{t}(x) &\lessapprox \eta m - K^{A,D}_{m-t, m}(e\mid x)\lessapprox \eta m - K^{A}_{m-t, m}(e\mid x)\\
        &\le \eta m- (d-1)(m-t)  + 2\eps  n.
    \end{align*}
    
    By Lemma \ref{lem:oracles}(i), $$K^{A,D}_{t}(x) = \min\{\eta m, K^A_{t}(x)\} + O(\log m).$$If the minimum is $K^A_{t}(x)$, then
    \begin{equation}
        st - \eps n \le\eta m - (d-1)(m-t)  + 2\eps n +O(\log m),
    \end{equation}
    or, equivalently, 
    \begin{equation}
        (d-1 - \eta) m \le (d-1 - s)t + 3\eps n +O(\log m).
    \end{equation}
    If $s \leq d-1$,  i.e. $\eta m= sm-6\eps n$, then this implies that 
    \begin{equation}
        (d-1 - s)m +3\eps n \le (d-1 - s)t + 2\eps n +O(\log m), 
    \end{equation}
    a contradiction. If $s > d-1$, i.e. $\eta m= (d-1)m-6\eps n$, then we would have 
    \begin{equation}
        3\eps n \le (d-1-s)t +2\eps n+O(\log m)\le 2\eps n+O(\log m),
    \end{equation}
    again a contradiction.
    
    If $K^{A,D}_{t}(x) = \eta m +O(\log m),$ then  we see that
    \begin{equation}
        \eta m\le \eta m-(d-1)(m-t)+2\eps n +O(\log m),
    \end{equation}
     and since $d-1\ge 1$, this implies that $t \ge m- O(\eps n)$. Therefore $y$ is within $2^{-m + O(\eps n)}$ of $x$. Again, a simple enumeration argument shows that this implies that
    \begin{equation}\label{mm}
        K^{A,D,e}_{m}(x \mid \pi_{e^\perp}\,x) \lessapprox O(\eps n).
    \end{equation}
   The conclusion follows by symmetry of information, Lemma \ref{lem:oracles}(iv), \eqref{mm}, and Lemma \ref{lem:algSlicing}(ii) as

\begin{align*}
        K^{A,e}_m(\pi_{e^\perp}\, x)&\approx K^{A,e}_m(x)-K^{A,e}_{m}(x \mid \pi_{e^\perp}\, x)\\
        &\gtrapprox K^{A,e}_m(x)-K^{A,D,e}_{m}(x \mid \pi_{e^\perp}\, x) - K^A_m(x) + \eta m\\
        &\ge  K^{A,e}_m(x)-O(\eps n)-K^A_m(x)+ \eta m\\
        &\gtrapprox -O(\eps n)+ \eta m,
    \end{align*}

    as required. 
\end{proof}

We end this section with a slight generalization of Lemma \ref{lem:algSlicing}. Its proof is nearly identical to that of Lemma \ref{lem:algSlicing}.
\begin{lemma}\label{lem:algProjection}
    Let $s > 0$, $A$ be an oracle, $x \in \R^d$ and $e\in \mathcal{S}^{d-1}$ satisfying
    \begin{enumerate}
        \item[(i)] $\dim_H^{A,x}(e) > s$ and
        \item[(ii)] $\dim_H^A(x) > s$.
    \end{enumerate}
    Then, for all $\eps >0$ and all sufficiently large $n$, 
    \begin{equation*}
        K^{A}_{n}(x\mid \pi_{e^{\perp}}\, x, e) \lessapprox K^A_n(x) - sn + \eps n.
    \end{equation*}
\end{lemma}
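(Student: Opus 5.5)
We follow the proof of the moreover part of Lemma \ref{lem:algSlicing}, with the role of $e$ now played by the full information $(\pi_{e^\perp}x, e)$. The aim is to show that, after lowering the complexity of $x$ with an oracle, $x$ is essentially the unique low-complexity point on the line $x+\R e$, which is exactly the fibre of $\pi_{e^\perp}$ through $x$.

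First fix $\eps>0$ and choose rationals $s<s'$ with $\dim_H^{A,x}(e)>s'$ and $\dim^A_H(x)>s'$. Then for all large $n$ and all $r\le n$ we have $K^{A,x}_r(e)\ge s'r-\eps n$ and $K^A_r(x)\ge s'r-\eps n$. Let $\eta\in\Q$ with $\eta n = sn-4\eps n$; this is at most $K^A_n(x)$ for large $n$. Let $D=D(n,\eta)$ be the oracle of Lemma \ref{lem:oracles}.

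Now take $y$ with $\pi_{e^\perp}y=\pi_{e^\perp}x$, $K^{A,D}_n(y)\le K^{A,D}_n(x)$ and $|x-y|\ge 2^{-n+c\eps n}$. Put $t=\lfloor-\log|x-y|\rfloor$. Since $y-x$ is parallel to $e$, the points $x$ and $y$ at precision $n$ determine $\pm e$ to precision $n-t$. Here no extra $(d-2)(n-t)$ term is needed, unlike in Lemma \ref{lem:algSlicing}, because the direction of $x-y$ is $e$ itself. Hence
\[
K^{A,D}_{n-t,n}(e\mid x)\lessapprox K^{A,D}_{n,t}(y\mid x)\approx K^{A,D}_n(y)-K^{A,D}_t(x)\lessapprox \eta n-K^{A,D}_t(x).
\]
For the left side we use Lemma \ref{lem:oracles}(ii) and the bound $K^A_{r,n}(e\mid x)\ge K^{A,x}_r(e)-O(\log n)\ge s'r-\eps n$. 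This gives $K^{A,D}_t(x)\lessapprox \eta n - s'(n-t)+\eps n$.

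We then split into the two cases of Lemma \ref{lem:oracles}(i).
\begin{itemize}
\item If $K^{A,D}_t(x)\approx K^A_t(x)\ge s't-\eps n$, then $s'n\le \eta n+2\eps n=sn-2\eps n$, which contradicts $s'>s$.
\item If $K^{A,D}_t(x)\approx\eta n$, then $s'(n-t)\lesssim \eps n$, so $t\ge n-O(\eps n)$. This contradicts the separation assumption once $c$ is large.
\end{itemize}
So all competitors lie within $2^{-n+O(\eps n)}$ of $x$. The standard enumeration argument (enumerate the points of the line at precision $n$ with complexity at most $K^{A,D}_n(x)$) then yields
\[
K^{A,D}_n(x\mid \pi_{e^\perp}x,e)\lessapprox O(\eps n).
\]
Finally, Lemma \ref{lem:oracles}(iv) gives $K^A_n(x\mid\pi_{e^\perp}x,e)\lessapprox O(\eps n)+K^A_n(x)-\eta n = K^A_n(x)-sn+O(\eps n)$. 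Rescaling $\eps$ finishes the proof.

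The main obstacle is the lower bound $K^{A,D}_{n-t,n}(e\mid x)\gtrsim s'(n-t)$. It requires converting the pointwise hypothesis $\dim_H^{A,x}(e)>s$ into a conditional bound at the finite precisions $(n-t,n)$. This conversion holds because conditioning on $x$ at precision $n$ is weaker than having $x$ as an oracle, up to $O(\log n)$. A further point to check is that the oracle $D$ does not destroy this bound, which is what Lemma \ref{lem:oracles}(ii) guarantees. The $\pm e$ ambiguity and the dependence of the precision of the recovered direction on $t$ are routine.
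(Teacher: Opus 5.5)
Your proposal is correct and follows the paper's proof essentially step by step. The shared steps are: the oracle $D$ with $\eta n = sn - O(\eps n)$; the bound $K^{A,D}_{n-t,n}(e\mid x)\lessapprox K^{A,D}_n(y)-K^{A,D}_t(x)$, combined with Lemma~\ref{lem:oracles}(ii) and hypothesis (i); the two-case analysis via Lemma~\ref{lem:oracles}(i); enumeration along the fibre; and Lemma~\ref{lem:oracles}(iv) to finish. The only cosmetic difference is that you introduce an intermediate rational $s'>s$, whereas the paper absorbs that slack into $\eps$.
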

\begin{proof}
    Let $\eta\in\Q$ such that $$\eta n = sn - 3\eps n.$$ Let $D$ be the oracle given by Lemma \ref{lem:oracles}. 
    
    As in the proof of Lemma \ref{lem:algSlicing}, we aim to show that $x$ is essentially the unique point in the ball of radius $1$ around $x$ whose projection onto $e^\perp$ is $\pi_{e^\perp} x$. 
    
    Suppose that $K^{A,D}_n(y)\le K^{A,D}_n(x)$ and that $\pi_{e^\perp} \,x$ and $\pi_{e^\perp} y$ agree at precision $n$. We again use the simple fact that, given $x$ and $y$ with precision $n$, we can compute the direction between $x$ and $y$ with an error at most $2^{-(n-t)}$, where $t = -\lfloor\log |x-y|\rfloor$. Note that this direction, up to precision $n-t$, is $e$.
    
    In the language of Kolmogorov complexity, this shows that
    \begin{equation}\label{eq:computeEgivenXY3}
        K^{A,D}_{n-t, n}(e\mid x) \lessapprox K^{A,D}_n(y\mid x).
    \end{equation}
    As before, $K^{A,D}_t(x\mid y) \approx K^{A,D}_t(y\mid x) \approx 0$. Using this fact, and the symmetry of information, we conclude that
    \begin{align*}
        K^{A,D}_{n-t, n}(e\mid x) &\lessapprox K^{A,D}_{n,t}(y\mid x)
        \approx K^{A,D}_n(y) - K^{A,D}_t(x),
    \end{align*}
    where $$K^{A,D}_n(y) \le K^{A,D}_n(x)\approx\eta n= sn - 3\eps n.$$
    Applying assumption (i) and Lemma \ref{lem:oracles}(ii), we see that
    $$K^{A,D}_{n-t, n}(e\mid x)\gtrapprox K^{A}_{n-t, n}(e\mid x)\gtrapprox s(n-t)-\eps n.$$
Putting together these inequalities we get
\begin{align*}        
K^{A,D}_t(x) &\le \eta n - s(n-t) + \eps n + O(\log n).\\
&\le st - 2\eps n + O(\log n).
\end{align*}

Recall that by Lemma \ref{lem:oracles}(i), $$K^{A,D}_t(x) = \min\{\eta n, K^A_t(x)\} + O(\log n).$$ If the minimum is $K^A_t(x)$, then we have an immediate contradiction by condition (ii). 

 If $K^{A,D}_t(x) = \eta n$, then from our upper bound of $K^{A,D}_t(x)$ above, we see that
 \begin{equation*}
     s(n-t) \leq \eps n + O(\log n)
 \end{equation*}
Therefore $y$ is within $2^{-n+(\eps/s) n+O(\log n)}$ of $x$.

 We again use the standard enumeration argument to conclude that we can find $x$ with precision $n$ with at most an additional $O(\eps n)$ information. That is,

    \begin{equation}
        K^{A,D}_{n}(x \mid \pi_{e^\perp}x,  e) \lessapprox O(\eps n).
    \end{equation}
    The conclusion follows by Lemma \ref{lem:oracles}(iv), since
    \begin{align*}
        K^{A}_{n}(x \mid \pi_{e^\perp}x,  e)  &\lessapprox K^{A,D}_{n}(x \mid \pi_{e^\perp}x,  e) + K^A_n(x) - \eta n\\
        &\lessapprox K^A_n(x) - sn + O(\eps n), 
    \end{align*}
    as required.
\end{proof}

\subsection{Deducing the distance set result}
In this section, we show that, in order to prove Theorem \ref{thm:mainthm1}, it suffices to prove its algorithmic analogue Theorem \ref{thm:maintheoremEff} below. 
This is a standard reduction, e.g. we used a very similar argument in \cite{CsoStu25}. For the sake of completeness, in this section we briefly present the variant that we use here.

\begin{theorem}\label{thm:maintheoremEff}
    Let $d = 3,4$,  $A$ be an oracle, $x, y\in\R^d$ and $e = \frac{y-x}{|x-y|}$. Suppose that 
    \begin{itemize}
\item[(i)] $\dim_H^A(x), \dim^A_H(y), \dim^{A,x}_H(e) > d/2$; 
\item[(ii)] $K^{A, x}_n(y) \gtrapprox K^{A}_n(y)$ for every $n\in\N$.
\end{itemize}
Then,
\begin{equation}
    \dim^{A,x}_H(|x-y|) \geq \frac{2}{3}.
\end{equation}
\end{theorem}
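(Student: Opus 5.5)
The plan is the standard algorithmic pinned-distance scheme. It combines the hyperplane incidence bound (Theorem~\ref{thm:hyperplanesIncidence}), converted into a pointwise complexity statement via Propositions~\ref{prop:enumerableSset} and~\ref{prop:sSetToComplexities}, with a partition of precisions $[1,n]$ according to the shape of the complexity function of $y$.

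Fix a large $n$ and write $r=n$ for the target precision. Since the conclusion concerns $\dim_H$, it suffices to show $K^{A,x}_r(|x-y|)\gtrapprox \tfrac23 r - \epsilon r$ for all large $r$. First, I will replace $K^A_m(y)$ by a monotone $(d+1)$-Lipschitz piecewise linear $f$ with $|f(m)-K_m^A(y)|\lesssim\log m$. Next, I will partition $[1,r]$ into intervals $[a_i,a_{i+1}]$ of two types. On \emph{teal} intervals $f$ grows at slope $\ge$ some threshold with Frostman-type non-concentration from the left. On \emph{yellow} intervals $f$ satisfies a Katz--Tao-type (upper) bound. The partition is chosen so that on each interval the average slope $s_i$ of $f$ satisfies a lower bound compatible with $\dim>d/2$ (this is where Section~\ref{sec:LipschitzFunction} and Jones' traveling salesman argument are invoked). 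Since $|x-y|$ and $x$ determine $y$ up to a sphere, I will bound, for each interval, the conditional complexity
\[
K^{A,x}_{a_{i+1},a_i}(|x-y|\mid |x-y|) \gtrapprox \min\{f(a_{i+1})-f(a_i),\ \tfrac23(a_{i+1}-a_i)\}-\epsilon(a_{i+1}-a_i)
\]
and sum over $i$ using symmetry of information, so that the total is at least $\tfrac23 r$ provided $f(r)\ge \tfrac23 r$, which holds since $\dim_H^A(y)>d/2\ge 3/2>2/3\cdot d/2$ appropriately.

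For a single interval, I will rescale by $2^{a_i}$ and argue by contradiction. Suppose the distance at precision $a_{i+1}$ given precision $a_i$ has low complexity. Then I will form, relative to the oracle $(A,x)$ with the coarse data fixed, the enumerable set of points $z$ near $y$ (at the rescaled scale) with $|x-z|$ agreeing with $|x-y|$ and of complexity at most that of $y$. At the relevant scale, spheres centered at $x$ look like hyperplanes with normal $e$. Varying the pin over the set of $x'$ of complexity $\le K(x)$ gives a family of $\delta$-hyperplanes through each $z$. The dimension $s$ of this family is controlled by $\dim^{A,x}_H(e)>d/2\ge d-2$ (for $d\le 4$), using Lemma~\ref{lem:algProjection} and Lemma~\ref{lem:algSlicing} to pass directions and projections between $x$ and $y$. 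Proposition~\ref{prop:enumerableSset} then turns these enumerable sets into a $(\delta,t,\delta^{-\eta})$-set of cubes with $t=s_i$, each equipped with a $(\delta,s,\delta^{-\eta})$-set of hyperplanes. Theorem~\ref{thm:hyperplanesIncidence} will show that there are at least $\delta^{-\min\{t,\frac{s+t-(d-2)}{2},1\}}M$ hyperplanes. Since the hyperplanes are described by $x'$ and the distance, this forces the distance to carry $\gtrapprox \min\{t,\frac{s+t-(d-2)}2,1\}$ times the interval length in complexity. With $s>d/2$ and the interval slope $t$ chosen appropriately, each term of the minimum is at least $2/3$ or equals the growth of $f$, as required. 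Assumption (ii), together with Proposition~\ref{prop:sSetToComplexities}(iii), guarantees that $y$ behaves as a typical point when relativized to $x$.

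The main obstacle is the interval analysis. One must verify that the partition can be chosen so that, on each interval, $\min\{t,\tfrac{s+t-(d-2)}{2},1\}\ge \min\{\text{slope},2/3\}$ with $s$ just above $d/2$, and that the intervals where $f$ grows slowly are compensated elsewhere. I would first try a greedy partition in which the yellow intervals have slope exactly $\le 2/3$-compatible growth and the teal intervals have slope $\ge 2(d-2)/3\cdot$ const. I would then check that the term $\frac{s+t-(d-2)}{2}\ge 2/3$ needs $t\ge \frac43+d-2-\frac d2$, which is $5/6$ for $d=3$ and $4/3$ for $d=4$; these thresholds are what the decomposition must supply.
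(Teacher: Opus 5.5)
Your bookkeeping has a genuine gap, and a better choice of decomposition will not close it. You aim for the per-interval bound $K^{A,x}_{a_{i+1},a_i}(|x-y|\mid|x-y|)\gtrapprox\min\{f(a_{i+1})-f(a_i),\tfrac23(a_{i+1}-a_i)\}$. The incidence estimate does not deliver this. Even where it holds, summing it cannot give $\tfrac23 r$, because $\sum_i\min\{u_i,v_i\}$ can be much smaller than $\min\{\sum_i u_i,\sum_i v_i\}$.

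For a concrete example, take $d=4$, $s$ just above $2$, and a profile where $K_k(y)$ has slope $4$ on $[0,n/3]$ and slope just above $1$ on $[n/3,n]$; then $K_k(y)>2k$ throughout. The sphere-to-hyperplane linearization only works on doubling intervals $[a,b]$ with $b\le 2a$. Any doubling interval meeting $[2n/3,n]$ therefore lies inside $[n/3,n]$, where the Frostman exponent is $t\approx 1$. There Corollary~\ref{cor:algorithmicIncidenceProjection} gives only $\min\{t,\frac{s+t-2}{2},1\}\approx\tfrac12$ per unit length, not $\tfrac23$. With every interval capped at $\tfrac23$ of its length, the lower bound your scheme can certify is at most $\tfrac23\cdot\tfrac{2n}{3}+\tfrac12\cdot\tfrac n3=\tfrac{11}{18}n$. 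Your own threshold computation ($t\ge 4/3$ for $d=4$) already shows that no decomposition supplies $\tfrac23$ on every interval. Separately, your yellow intervals carry only a Katz--Tao upper bound, so the incidence theorem gives essentially nothing there: its input, hypothesis (i) of Corollary~\ref{cor:algorithmicIncidenceProjection}, is a Frostman lower bound.

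The missing idea is an amortized count in which $\tfrac23$ is only an average, and which relies on some intervals giving the \emph{full} rate $1$. The paper proceeds as follows.
\begin{itemize}
\item It caps the complexity of $y$ at $dn/2$ using the oracle $D$ of Lemma~\ref{lem:oracles}. Then $f(n)=dn/2$ and $f(t)\ge dt/2$, i.e.\ $f$ is $d/2$-Katz--Tao on $[0,n]$, and it applies Proposition~\ref{prop:existenceNicePartition} with $\sigma=d/2$.
\item On the doubling $d/2$-Frostman intervals ($G\cup R$), Proposition~\ref{prop:conditionalBoundsFrostmanIntervals} with $t=d/2$, and $s>d/2$ from Lemma~\ref{lem:C1C2C3}, gives $\alpha=\min\{\tfrac d2,\tfrac{s+d/2-(d-2)}{2},1\}=1$.
\item On the nearly linear intervals in $B$ (slope $\sigma'\le d/2$, hence both Frostman and Katz--Tao), it gives $\alpha\ge\sigma'/2$, i.e.\ half the growth of $f$.
\item From $dn/2=\sum\Delta f\le\tfrac d2|G|+d|R|+\sum_B\Delta f+O(\eps n)$ and the combinatorial fact $|G|\ge|R|$, one gets $K^{A,D,x}_n(|x-y|)\ge\tfrac{dn}{4}-(\tfrac{3d}{4}-2)|R|-O(\eps n)$. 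Trivially one also has $K^{A,D,x}_n(|x-y|)\ge|G|+|R|\ge 2|R|$, and splitting at $|R|=n/3$ gives $\tfrac23 n$ for $d=3,4$.
\end{itemize}

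Finally, the paper never varies the pin. On a doubling interval $[a,b]$, $x$ and $y$ at precision $a$ determine the direction $e'$ at precision $b-a$, and $|x-y|$ at precision $b$ then determines $p_{e'}y$. The hyperplane is thus encoded by $(e',p_{e'}y)$ relative to $A$ alone. Its non-concentration comes from $K^A_{k,n}(e\mid y)\gtrapprox sk$ (Lemma~\ref{lem:C1C2C3}), not from a family of pins $x'$.
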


To prove the existence of such points, we will use Ren's radial projection result \cite{Ren23} combined with standard algorithmic tools. Recall that the Riesz $s$-energy of a finite, compactly supported Borel measure $\mu$ on $\R^d$ is
\begin{equation*}
    \mathcal{E}_s(\mu) = \iint \frac{d\mu(x) d\mu(y)}{|x-y|^s}.
\end{equation*}

\begin{proposition}[Proposition 9.1 in \cite{Ren23}]\label{prop:Ren}
Let $1\leq k \leq d-1$ be an integer, and fix $s\in (k-1, k]$ and $\eta > 0$. Let $\mu, \nu$ be Borel probability measures on $\R^d$ such that $\mathcal{E}_s(\mu),\mathcal{E}_s(\nu) < \infty$ and $\mu$ and $\nu$ have $\sim 1$ separated supports. Suppose that $\mu(H) = \nu(H) = 0$ for every $k$-plane $H$. Then, for $\mu$-almost every $x$, and for all Borel sets $Y$ of positive $\nu$ measure, 
\begin{equation*}
    \dim_H(\pi_x Y) \geq s - \eta,
\end{equation*}
where $\pi_x(y) = \frac{y-x}{|y-x|}$ is the radial projection map.
\end{proposition}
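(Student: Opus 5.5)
Since Proposition \ref{prop:Ren} is quoted from \cite{Ren23}, we would not reprove it here; if we had to, we would follow the Orponen--Shmerkin--Wang ``thin tubes'' bootstrapping strategy as Ren does. First reduce to a discretized statement. Suppose the conclusion fails. Then for a set of $x$ of positive $\mu$-measure there is a Borel $Y$ with $\nu(Y)>0$ and $\dim_H(\pi_x Y)<s-\eta$. Pigeonhole a scale $\delta$ and sets $X$, $Y$ with the following property: for every $x\in X$, a $\nu$-large part of $Y$ is covered by $\lessapprox \delta^{-(s-\eta)}$ many $\delta$-tubes through $x$. The finite energy hypothesis $\mathcal{E}_s(\mu),\mathcal{E}_s(\nu)<\infty$ lets us pass to $(\delta,s)$-set regularizations of $\mu$ and $\nu$. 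We need $\mu(H)=\nu(H)=0$ for every $k$-plane in a quantitative, scale-by-scale form: no $\delta^{\epsilon}$-neighbourhood of a $k$-plane carries more than $\delta^{\epsilon'}$ of the mass. This follows from a compactness argument over the Grassmannian of affine $k$-planes.

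Next, define the ``$(s',K)$-thin tubes'' property: for most pairs $(x,y)$, the $\nu$-mass (respectively $\mu$-mass) of any $r$-tube through $x$ and $y$ is $\le K r^{s'}$ at all scales $r\ge\delta$. Start from a trivial exponent $s'_0$, which comes from the non-concentration on $(k-1)$-planes and the $s>k-1$ hypothesis. Then bootstrap $s'\mapsto s'+\epsilon(s')$ until $s'$ reaches $s-\eta$. Thin tubes at exponent $s'$ plus an incidence/Furstenberg-type estimate give the radial projection bound $\dim\pi_x Y\ge s'$. Conversely, a radial projection bound at exponent $s'$ improves the thin-tubes exponent, via a two-ends/energy argument that reinserts $\mu$'s $s$-dimensionality.

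The Furstenberg input in $\R^d$ is obtained by projecting. A generic orthogonal projection onto a $(k+1)$-plane, and then onto a $2$-plane, preserves $(\delta,s)$-set non-concentration, by a discretized Marstrand theorem. It transfers the tube configuration into the plane, where Theorem \ref{thm:RenWang} applies. The exponent $\frac{s+t}{2}>s'$ gives the $\epsilon$-gain. In the planar case ($d=2,k=1$) this is exactly Ren--Wang's sharp Furstenberg bound feeding Orponen--Shmerkin--Wang.

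The main obstacle is the bootstrapping step in higher dimensions when the sets nearly concentrate near $k$-planes at some intermediate scale. There the projection to a plane loses the non-concentration. We would handle it by induction on $k$: if mass concentrates near a $k$-plane at scale $r$, rescale and apply the $(k-1)$ case inside that plane, where $s>k-1$ leaves room. Otherwise, use the quantitative non-concentration from the first step to run the projected planar incidence argument. Controlling the $\delta^{-\epsilon}$ losses uniformly across this induction, so that the gain $\epsilon(s')$ stays bounded below on compact ranges of $s'$, is where we expect most of the work to lie.
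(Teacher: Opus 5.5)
Deferring to the citation is exactly what the paper does. Proposition \ref{prop:Ren} is imported verbatim from \cite{Ren23} without proof. It is used only as a black box in Corollary \ref{cor:existenceOfPoints}, in the case $k=d-1$, $s>d/2$.

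Your fallback sketch, however, is not a viable outline: two of its steps fail as stated.

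\textbf{Compactness gives no rate.} Compactness of the family of affine $k$-planes meeting a fixed ball only yields $\sup_H\mu(H^{(r)})\to 0$ as $r\to0$. A power law such as $\mu(H^{(\delta^{\epsilon})})\le\delta^{\epsilon'}$ is false in general. For example, take $d=2$, $k=1$, $s<1$, and let $\mu=\sum_{j\ge1}\frac{1}{j(j+1)}\mu_j$, where $\mu_j$ is normalized arc length on a unit-length arc of a large circle lying in the $2^{-2^j}$-neighbourhood of the $x$-axis $\ell_0$. Then $\mu(B(z,r))\lesssim r$, so $\mathcal{E}_s(\mu)<\infty$. Also $\mu$ charges no line, since a line meets each arc in at most two points. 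Yet $\mu(\ell_0^{(2^{-2^j})})\ge 1/j$. What compactness actually gives is a bound $\sup_H\mu(H^{(r_0)})\le\kappa$ at a single scale $r_0=r_0(\kappa)$.

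\textbf{The planar Furstenberg input cannot reach exponents above $1$.} This is the more serious problem. After projecting to a $2$-plane, the tubes through $x$ become planar tubes through the image point, and their directions lie in $S^1$. So Theorem \ref{thm:RenWang}, which requires $s\in(0,1]$ and whose exponent is capped at $1$, can never certify $\dim_H\pi_xY\ge s'$ for $s'>1$. For $k\ge2$ the target $s-\eta$ generally exceeds $1$. In the only case the paper uses, $s>d/2\ge 3/2$. Accordingly, \cite{Ren23} rests on a new incidence estimate for $\delta$-tubes and quasi-product sets of $\delta$-balls in $\R^d$ itself, not on a planar Furstenberg bound.

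\textbf{The induction on $k$ does not close the gap.} Inside a $k$-plane, radial projections live in a $(k-1)$-sphere. So the $(k-1)$ case can return at most the exponent $k-1$ at the scales where mass concentrates near a $k$-plane, and $k-1<s-\eta$ once $\eta<s-(k-1)$.
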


As we have already discussed it in Section 2.2 in \cite{CsoStu25}, if $Y\subseteq\R^d$ is computably compact relative to $A$ and $x\in\R^d$, then 
 \begin{equation}\label{don}
     \dim_H(\pi_x Y) = \sup\limits_{y\in Y} \dim^{A,x}_H(\pi_x y).
 \end{equation}

We also discussed in Section 2.2 of \cite{CsoStu25}, that, if $\mu$ is a probability measure whose support is contained in a compact set $X\subset\R^d$, then $X$ has a Hausdorff oracle $A$ relative to which $\mu$ is computable and such that, for every oracle $B$, 
\begin{equation}\label{dd}
    K^{A,B}_n(x) \gtrapprox K^A_n(x)
    \end{equation}
for $\mu$-almost every $x\in X$. In the following corollary, we choose $A$ such that this holds for $X$ with $\mu$, and it also holds for $Y$ with $\nu$.

\begin{corollary}\label{cor:existenceOfPoints}
    Let $X,Y\subseteq\R^d$ be compact sets and let $\mu, \nu$ be Borel probability measures whose support is contained in $X$ and $Y$, respectively such that $\mathcal{E}_{s}(\mu),\mathcal{E}_{s}(\nu) < \infty$ for some $s > d/2$ and $\mu$ and $\nu$ have $\sim 1$ separated supports. 
    Suppose that $\mu(H) = \nu(H) = 0$ for every affine hyperplane $H$. Then, there exist $x,y\in \R^d$ satisfying assumptions (i) and (ii) of Theorem \ref{thm:maintheoremEff}.
\end{corollary}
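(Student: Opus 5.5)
We need to prove Corollary \ref{cor:existenceOfPoints}. The plan is to combine Ren's radial projection theorem (Proposition \ref{prop:Ren}), applied with $k=d-1$ and exponent $s$, with the point-to-set principle \eqref{don} and the Hausdorff oracle property \eqref{dd}.

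First we set up the oracle. We choose an oracle $A$ as described before the corollary. Relative to $A$, the measures $\mu,\nu$ are computable, and $X,Y$ are computably compact. Moreover, \eqref{dd} holds for $\mu$-a.e.\ $x\in X$ and for $\nu$-a.e.\ $y\in Y$, for every oracle $B$.

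Next we get the dimension bounds on $x$ and $y$. Since $\mathcal{E}_s(\mu)<\infty$ with $s>d/2$, we fix $d/2<s'<s$. For every $r$, the set of $x$ with $K^A_r(x)<s'r$ is covered by at most $2^{s'r}$ dyadic cubes of side $2^{-r}$. The finite energy implies $\mu(B(x,\rho))\le C(x)\rho^{s}$ for $\mu$-a.e.\ $x$. A Borel--Cantelli argument then gives $\dim^A_H(x)\ge s'>d/2$ for $\mu$-a.e.\ $x$. Alternatively, we can cite the standard fact that $\mu$-a.e.\ point of a measure that is computable relative to $A$ and has finite $s$-energy has effective dimension at least $s$. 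The same argument shows $\dim^A_H(y)>d/2$ for $\nu$-a.e.\ $y$.

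Now we apply Ren's theorem. Since $s\in(d-2,d-1]$ may fail, we need to arrange it. If $s>d-1$, we replace $s$ by any value in $(d/2,d-1]$; the energy stays finite because lower exponents have smaller energy on bounded supports. This requires $d/2<d-1$, which holds for $d\geq3$. The hypotheses $\mu(H)=\nu(H)=0$ for hyperplanes $H$, together with separated supports, then allow Proposition \ref{prop:Ren} with $k=d-1$ and $\eta$ small so that $s-\eta>d/2$. It yields a $\mu$-full set of $x$ such that $\dim_H(\pi_x Y')>d/2$ for every Borel $Y'$ with $\nu(Y')>0$. We fix such an $x$, also lying in the $\mu$-full sets from the previous steps.

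Finally we find $y$ by a contradiction argument, which is the main step. Let $G$ be the $\nu$-full set of $y$ satisfying $\dim^A_H(y)>d/2$ and \eqref{dd} with $B=x$; condition \eqref{dd} gives (ii). Suppose, for contradiction, that the set $N=\{y\in G:\dim^{A,x}_H(\pi_x y)\le d/2\}$ has positive $\nu$-measure. Choose a compact $Y'\subseteq N$ with $\nu(Y')>0$. The obstacle is that \eqref{don} requires $Y'$ to be computably compact relative to the oracle. We handle this by enlarging the oracle to $A'=(A,\text{code for }Y')$, chosen so that \eqref{dd} and the dimension bounds on $x$ are preserved. Alternatively, we use the inequality direction that is always available: the point-to-set principle gives $\dim_H(\pi_xY')=\min_B\sup_{y}\dim^{B}_H(\pi_xy)$. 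With $B=(A,x,Y')$ this gives $\dim_H(\pi_x Y')\le\sup_{y\in Y'}\dim^{A,x,Y'}_H(\pi_x y)\le \sup_{y\in Y'}\dim^{A,x}_H(\pi_xy)\le d/2$. This contradicts the choice of $x$. Hence $\nu$-a.e.\ $y$ satisfies $\dim^{A,x}_H(e)>d/2$, where $e=\pi_x y$, and we pick any such $y\in G$. The care needed is only in choosing $Y'$ so that strict inequality passes through the supremum. To handle this, we take $N_q=\{\dim^{A,x}_H(\pi_x y)\le q\}$ for rational $q<s-\eta$, and use countable additivity to get $\nu(N_q)>0$ for some such $q$.
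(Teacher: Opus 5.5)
Your proposal is correct and follows the same route as the paper. You fix $x$ via Ren's radial projection theorem (with $k=d-1$) inside the $\mu$-full set where $\dim_H^A(x)>d/2$. You then use the inequality $\dim_H(E)\le\sup_{z\in E}\dim_H^{B}(z)$, valid for every oracle $B$, to find a $y$ with $\dim_H^{A,x}(\pi_x y)>d/2$ in the $\nu$-full set where \eqref{dd} holds with $B=x$.

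Your version usefully makes explicit several points the paper leaves implicit:
\begin{itemize}
\item you also intersect with the set where $\dim_H^A(y)>d/2$;
\item you place $s$ in $(d-2,d-1]$, which for $d=3,4$ only requires lowering it;
\item computable compactness of $Y'$ is not needed.
\end{itemize}
The oracle-enlargement alternative and the final $N_q$ refinement are unnecessary.
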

\begin{proof}
   Let $x\in X$ be a point such that $\dim_H^A(x) > d/2$ and such that $\dim_H(\pi_x Y^\prime) > d/2$ for every $Y^\prime$ of positive $\nu$ measure. Note that, by Proposition \ref{prop:Ren}, such a point exists. 

   Let $Y^\prime$ be the set of $y\in Y$ such that (ii) holds. By \eqref{dd}, $Y^\prime$ has full $\nu$-measure. Hence there must be a point $y\in Y^\prime$ such that $$\dim^{A,x}_H(\pi_x y) = \dim^{A,x}_H(e) > d/2,$$ and the proof is complete.
\end{proof}

We are now able to prove our main theorem on pinned distance sets in $\R^3$ and $\R^4$.
\begin{proof}[Proof of Theorem \ref{thm:mainthm1}]
    We proceed by induction on $d$, up to $d = 4$. When $d = 2$, the conclusion is well-known (see, e.g., \cite{KelShm19}).
    
    For the inductive step, suppose that $d > 2$. Since $E$ is Borel and $\dim_H(E) > d/2$, there is a $s > d/2$, compact subsets $X, Y\subseteq E$ and Borel probability measures $\mu$ and $\nu$ such that $\mathcal{E}_{s}(\mu), \mathcal{E}_{s}(\nu) < \infty$ and $\mu$ and $\nu$ have separated supports. 

    If either $\mu(H)$ or $\nu(H)$ is non-zero for some hyperplane $H$, then by our inductive hypothesis, the proof is finished. We can therefore assume that $\mu(H) = \nu(H) = 0$ for every hyperplane $H$, and so we can apply Corollary \ref{cor:existenceOfPoints}. Hence, there are points $x\in X$ and $y\in Y$ satisfying (i) and (ii). Applying Theorem \ref{thm:maintheoremEff}, we see that $\dim_H^{A,x}(|x-y|) \geq 2/3$. Since $Y$ is computably compact relative to $A$, and $y \mapsto |x-y|$ is computable relative to $x$, $(A,x)$ is a Hausdorff oracle for $\Delta_x Y$, and the conclusion follows.
\end{proof}

\section{Discretized slicings and projections}\label{sec:discretizedSlicingProjection}

In this section we prove a few facts about discretized slicings and projections. 

\medskip

Let $\delta=2^{-n}$ be a dyadic scale. For any hyperplane $H$ we denote its normal vector by $H^\perp$. 

If $H$ is a $\delta$-hyperplane, the normal vectors of the hyperplanes belonging to $H$ agree up to an error at most $\lesssim \delta$. We denote by $H^\perp$ the normal vector of $H$, which represents a direction defined up to an error at most $\lesssim \delta$. For any linear subspace $V$, we write $H^\perp\subset  V$ if $H^\perp$ agrees with a direction in $V$ up to an error at most $\lesssim\delta$. More generally, for another $\delta$-hyperplane $H'$, the notation $H^\perp\subset H'$ indicates that $H'$ contains both the origin and a direction agreeing with $H^\perp$, both up to an error at most $\lesssim\delta$. 

\medskip
Let $\H$ be an arbitrary set of $\delta$-hyperplanes in $\R^d$. Let $A$ be an oracle relative to which $\H$ is computable. We fix an $\eps>0$, and a direction
$e\in\S^{d-1}$. 

Let $\H'$ denote the set of those $\delta$-hyperplanes $H\in\H$ for which $H^\perp\subset e^\perp$. Then either $\H'$ is empty, or we can find an $H\in \H'$ with $K_n^{A,e}(H)\gtrapprox \log |\H'|$. Trivially, we also have $K_n^A(H)\lessapprox\log|\H|$. Using symmetry of information, we get
\begin{align*}
\log|\H'|&\lessapprox K_n^{A,e}(H)\lessapprox K_n^A(H\mid e)\approx K_n^A(H,e)-K_n^A(e)\\
&\approx K_n^A(H)+K_n^A(e\mid H)-K_n^A(e)\lessapprox \log|\H|+K_n^A(e\mid H)-K_n^A(e).
\end{align*}
Since $H$ gives a direction, $H^{\perp}$, in $e^{\perp}$ with precision $n$, we can find $e^\perp$ (and hence also $e$) with precision $n$ by giving at most $(d-2)n$ extra information. This shows that $K_n^A(e\mid H)\lessapprox (d-2)n$. Also, for most directions $e$ we have $K_n^A(e)\ge (d-1)n-\eps n$, hence
$$|\H'|\le |\H|2^{-n+\eps n +O(\log n)}=\delta^{1-O(\eps)}|\H|.$$
The set of those directions $e$ for which this fails is covered by $2^{(d-1)n-\eps n}=\delta^{-(d-1)+\eps}$  many $\delta$-cubes. Therefore we proved the following lemma.

\begin{lemma}\label{lem:discretizedIntersection}
    Let $d \geq 2$, $\delta=2^{-n}$ be a dyadic scale, and let $\eps > 0$. Let $\H$ be a set of $\delta$-hyperplanes in $\R^d$, and let $A$ be an oracle relative to which $\H$ is computable.  Let $e \in \S^{d-1}$ with
    \begin{equation*}
        K^A_n(e) \geq (d-1)n - \eps n.
    \end{equation*}
    Then
    \begin{equation*}
        \H':= \{H\in \H\mid H^\perp \subset e^\perp\}
    \end{equation*} 
    satisfies 
    \begin{equation*}
        \vert \H'\vert \lesssim \delta^{1-O(\eps)} \vert \H \vert.
    \end{equation*}
\end{lemma}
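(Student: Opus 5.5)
The argument preceding the statement already contains the proof; what remains is to organise it as a Kolmogorov-complexity counting argument. If $\H'$ is empty there is nothing to prove. Otherwise, since $\H'$ is computable relative to $(A,e)$ (given $e$ we can list the $H\in\H$ with $H^\perp\subset e^\perp$), some $H\in\H'$ has relative complexity essentially as large as the set. Concretely, there are fewer than $2^{k}$ programs of length $<k$, so some $H\in\H'$ satisfies $K^{A,e}_n(H)\gtrapprox\log|\H'|$. On the other hand, $\H$ is computable relative to $A$, so $K^A_n(H)\lessapprox\log|\H|$.

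The key steps, in order, are as follows.

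\begin{enumerate}
\item Replace oracle access to $e$ by conditioning on $e$ at precision $n$, so that $K^{A,e}_n(H)\lessapprox K^A_n(H\mid e)$. This holds up to $O(\log n)$ because $H$ only depends on finitely much information at scale $\delta$.
\item Apply symmetry of information twice:
\[ K^A_n(H\mid e)\approx K^A_n(H,e)-K^A_n(e)\approx K^A_n(H)+K^A_n(e\mid H)-K^A_n(e). \]
\item Bound the middle term geometrically. Knowing $H$ at precision $n$ gives the normal direction $H^\perp$ to within $\lesssim\delta$, and $H^\perp$ lies in $e^\perp$. The set of directions $e$ orthogonal to a fixed unit vector is a great sphere $\S^{d-2}$, which can be covered by $\lesssim\delta^{-(d-2)}$ caps of size $\delta$. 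Naming one cap costs $(d-2)n+O(\log n)$ bits, so $K^A_n(e\mid H)\lessapprox(d-2)n$.
\item Combine these bounds with the hypothesis $K^A_n(e)\ge(d-1)n-\eps n$ to get
\[ \log|\H'|\lessapprox\log|\H|+(d-2)n-(d-1)n+\eps n=\log|\H|-n+\eps n. \]
The $O(\log n)$ error terms are absorbed into $\delta^{-O(\eps)}$ once $\delta$ is small, which gives $|\H'|\lesssim\delta^{1-O(\eps)}|\H|$.
\end{enumerate}

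The main point needing care is step 3: one must check that the $\lesssim\delta$ errors in $H^\perp$, and in the meaning of $H^\perp\subset e^\perp$, only enlarge the candidate set of caps for $e$ by a constant factor. A $\lesssim\delta$-neighbourhood of a great $(d-2)$-sphere is covered by $O(\delta^{-(d-2)})$ caps, so the cost is still $(d-2)n+O(1)$. A second point is that the bookkeeping of precisions in symmetry of information costs only logarithmic terms. Both are routine.
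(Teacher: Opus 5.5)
Your proposal is correct and follows the paper's own argument essentially step for step: pick $H\in\mathcal{H}'$ with $K^{A,e}_n(H)\gtrapprox\log|\mathcal{H}'|$, use symmetry of information to get $\log|\mathcal{H}'|\lessapprox\log|\mathcal{H}|+K^A_n(e\mid H)-K^A_n(e)$, and bound $K^A_n(e\mid H)\lessapprox(d-2)n$ because $e$ lies within $\lesssim\delta$ of the great $(d-2)$-sphere orthogonal to $H^\perp$. One minor remark: the existence of such a high-complexity $H$ follows from counting alone, so it does not actually require $\mathcal{H}'$ to be computable from $(A,e)$.
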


We now turn to a discrete analog of Mattila's slicing theorem.
\begin{theorem}[Mattila \cite{Mattila75}]\label{thm:mattilaslicing}
    Let $d - k < s \leq d$, and let $A\subseteq\R^d$ be an $\mathcal{H}^s$-measurable set with $0 < \mathcal{H}^s(A)< \infty$. Then, for almost all $V\in G(d,k)$,
    \begin{equation*}
        \mathcal{H}^{d-k}(\{u\in V^\perp\mid \dim_H(A\cap (V+u)) = s+ k -d\}) > 0.
    \end{equation*}
\end{theorem}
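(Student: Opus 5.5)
The plan is the classical potential-theoretic argument via sliced measures, with an Eilenberg-type inequality for the upper bound.

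\textbf{Reduction and setup.} Since $0<\mathcal{H}^s(A)<\infty$, density theorems give a compact $A'\subseteq A$ with $\mathcal{H}^s(A')>0$ on which $\mu:=\mathcal{H}^s|_{A'}$ satisfies $\mu(B(x,r))\lesssim r^s$. Hence the Riesz energy $\mathcal{E}_t(\mu)$ is finite for every $t<s$. Fix such a $t$ with $d-k<t<s$.

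\textbf{Lower bound on slice dimension.} For $V\in G(d,k)$ and $u\in V^\perp$, define the sliced measures $\mu_{V,u}$ as weak limits of $(2\epsilon)^{-(d-k)}\mu|_{\{x:\ |\pi_{V^\perp}x-u|<\epsilon\}}$ as $\epsilon\to 0$. These are supported on $A'\cap(V+u)$. The key step is the averaged energy estimate
\begin{equation*}
\int_{G(d,k)}\int_{V^\perp}\mathcal{E}_{t-(d-k)}(\mu_{V,u})\,du\,dV\lesssim \mathcal{E}_t(\mu)<\infty .
\end{equation*}
Before the limit, apply Fubini and the elementary Grassmannian estimate
\begin{equation*}
\gamma_{d,k}\bigl(\{V: |\pi_{V^\perp}(x-y)|<\epsilon\}\bigr)\lesssim \epsilon^{d-k}|x-y|^{-(d-k)} .
\end{equation*}
Integrating the kernel $|x-y|^{-(t-(d-k))}$ against this bound yields the kernel $|x-y|^{-t}$. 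Then pass to the limit using Fatou's lemma and lower semicontinuity of energy under weak convergence.

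\textbf{Positivity of slice mass.} Because $d-k<t$, for almost every $V$ the projection $\pi_{V^\perp}\mu$ is absolutely continuous (indeed in $L^2$, by Kaufman's argument applied to $\mathcal{E}_{d-k}(\mu)<\infty$). For such $V$ one has $\int\mu_{V,u}(\R^d)\,du=\mu(\R^d)>0$, so the $u$ with $\mu_{V,u}\ne0$ form a set of positive $\mathcal{H}^{d-k}$ measure. On these, for a.e. $u$, finite $(t-(d-k))$-energy gives $\dim_H(A\cap(V+u))\ge t+k-d$. Letting $t\uparrow s$ along a countable sequence and intersecting the full-measure sets of $V$ gives the lower bound $s+k-d$.

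\textbf{Upper bound.} Use the Eilenberg (coarea) inequality
\begin{equation*}
\int^*_{V^\perp}\mathcal{H}^{s-(d-k)}\bigl(A\cap(V+u)\bigr)\,du\lesssim \mathcal{H}^s(A)<\infty ,
\end{equation*}
valid for every $V$, since $\pi_{V^\perp}$ is $1$-Lipschitz. It gives $\mathcal{H}^{s+k-d}(A\cap(V+u))<\infty$, hence $\dim_H(A\cap(V+u))\le s+k-d$, for a.e. $u$. Combining the two bounds yields the claimed positive-measure set of $u$ with slice dimension exactly $s+k-d$.

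\textbf{Main obstacle.} I expect the hardest part to be making the sliced-measure construction rigorous. One must show the weak limits exist for a.e. $(V,u)$, identify their total mass with the density of $\pi_{V^\perp}\mu$, and justify interchanging the limit $\epsilon\to0$ with the energy integral. I would first prove existence and mass identification by differentiation of $\pi_{V^\perp}\mu$ with respect to Lebesgue measure on $V^\perp$. I would then obtain the energy bound only as a $\liminf$ inequality, which is all that is needed.
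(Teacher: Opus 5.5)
Your argument is correct. It is essentially Mattila's own potential-theoretic proof: restrict to $\mu=\mathcal{H}^s|_{A'}$ with $\mu(B(x,r))\lesssim r^s$, prove the averaged sliced-energy inequality, use Kaufman's $L^2$ projection theorem to show that the slices carry mass on a set of $u$ of positive measure, and use Eilenberg's inequality for the upper bound. Two small points are worth stating explicitly.

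\begin{itemize}
\item With a Euclidean ball in $V^\perp$ and the normalization $(2\epsilon)^{-(d-k)}$, you get $\int\mu_{V,u}(\R^d)\,du=c_{d-k}\,\mu(\R^d)$ for some dimensional constant $c_{d-k}>0$, not exactly $\mu(\R^d)$. This is harmless.
\item The countable intersection over $t_j\uparrow s$ is legitimate precisely because your $\mu$ has finite $t$-energy for every $t<s$ simultaneously. So the sliced measures, and hence the positive-measure set $\{u:\mu_{V,u}\neq 0\}$, do not depend on $t$, and each $t_j$ only removes an $\mathcal{L}^{d-k}$-null set of $u$.
\end{itemize}

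The paper gives no proof of this statement; it quotes it from \cite{Mattila75}. What it proves instead is the single-scale analogue, Theorem \ref{thm:discretizedSlicing}, only for hyperplane slices ($k=d-1$), by a completely different algorithmic route:
\begin{itemize}
\item ``Almost every $V$'' becomes ``every $e$ with $K^A_n(e)\ge(d-1)n-2\eps n$''.
\item Instead of sliced measures, one fixes a complexity-typical cube $Q$ (Proposition \ref{prop:sSetToComplexities}).
\item The role of your averaged energy inequality is played by the second part of Lemma \ref{lem:algSlicing}. This is a uniqueness-in-the-fibre argument using the truncating oracle of Lemma \ref{lem:oracles}, and it yields $K^{A,e,I}_m(Q)\gtrapprox(s-1)m-O((\eps+\eta)n)$ for all $m\le n$.
\item Proposition \ref{prop:enumerableSset} converts this into a $(\delta,s-1,\delta^{-O(\eps+\eta)})$-subset of the slab $p_e^{-1}I$ of size $\gtrsim\delta^{1+o(1)}|\P|$.
\item The bound $K^{A,e}_n(I)\gtrapprox n-O((\eps+\eta)n)$ replaces your positive-measure set of $u$ by $\gtrsim\delta^{-1+O(\eps+\eta)}$ good intervals $I$.
\end{itemize}

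Your route gives the exact continuum statement for every $k$, including the upper bound $\dim_H(A\cap(V+u))\le s+k-d$; that bound has no counterpart in the paper and is not needed there. The paper's route gives Frostman-type non-concentration at every scale between $\delta$ and $1$, with explicit $\delta^{-O(\eps+\eta)}$ losses and a cardinality bound. It also describes the exceptional directions explicitly: they are covered by $\lesssim\delta^{-(d-1)+2\eps}$ caps. This is exactly the form consumed by the dimension-reduction step in Lemma \ref{lem:inductionHelpPlanes2}. Extracting it from an averaged energy bound would require additional pigeonholing.
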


In the special case when $k=d-1$, this shows that for every $s>1$, for almost every direction $e\in S^{d-1}$, an $s$-set in $\R^d$ intersects positively many shifts of the hyperplane $e^\perp$ in a set of dimension $s-1$. 

Our proof of a discretized analog of this result uses algorithmic methods, in particular, the results we presented in Section 2.

\begin{theorem}\label{thm:discretizedSlicing}
    Let $d \geq 2$, $1 <s\leq d$, and $\eps, \eta \in (0,s-1)$. For all sufficiently small dyadic scales $\delta$, the following holds. Let $\P$ be a $(\delta, s, \delta^{-\eta})$-set of $\delta$-cubes in $[0,1]^d$. Let $A$ be an oracle relative to which $\P$, $\delta = 2^{-n}$, $s$, $\eps$ and $\eta$ are computable. Then, for every $e \in \S^{d-1}$, with 
    \begin{equation*}
        K^A_n(e) \geq (d-1)n - 2\eps n,
    \end{equation*}
    there are $\gtrsim\delta^{-1 + O(\eps + \eta)}$ many dyadic $\delta$-intervals $I$ in $\R$ for which
    \begin{equation*}
        \P_1:= \{Q \in \P \mid Q \cap p_e^{-1}I \neq \emptyset\}
    \end{equation*}
    contains a $(\delta, s-1, \delta^{-O(\eps+\eta)})$-subset $\P_2$ with $\vert \P_2 \vert \gtrsim \delta^{1+ o(1)}\vert \P\vert $. 
\end{theorem}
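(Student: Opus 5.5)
We will argue algorithmically: pick typical cubes of $\P$, use the ``moreover'' part of Lemma \ref{lem:algSlicing} to control the fibers of $p_e$, and convert complexity statements back into $(\delta,s-1,\cdot)$-sets via Proposition \ref{prop:enumerableSset}.

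\textbf{Step 1: typical points.} Apply Proposition \ref{prop:sSetToComplexities} to $\P$ with oracle $A$ and $B=e$. This gives $\P^\prime\subseteq\P$ with $|\P^\prime|\gtrsim|\P|$ such that every $Q\in\P^\prime$ (identified with its center $x$) satisfies:
\begin{itemize}
\item $K^A_n(x)\approx\log|\P|$;
\item $K^A_m(x)\gtrapprox sm-\eta n$ for all $m\le n$;
\item $K^{A,e}_m(x)\approx K^A_m(x)$.
\end{itemize}
The hypothesis $K^A_n(e)\ge(d-1)n-2\eps n$ should also hold at all precisions $m\le n$ up to $O(\eps n)$. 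Strictly it only gives $K^A_m(e)\ge (d-1)m-2\eps n$, which is what (i) of Lemma \ref{lem:algSlicing} asks for, with $\eps$ replaced by $O(\eps)$. So the hypotheses of Lemma \ref{lem:algSlicing} hold with error $O(\eps+\eta)n$, and $s>1$ because $\eps,\eta<s-1$.

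\textbf{Step 2: fibers.} For $x\in\P^\prime$ and each $m\le n$, the moreover clause gives
\[
K^{A,e}_{n,n,m}(x\mid p_e x, x)\lessapprox K^A_n(x)-n-(s-1)m+O(\eps+\eta)n .
\]
Let $I$ be the $\delta$-interval containing $p_e x$, and let $\P_1(I)$ be the set of cubes of $\P$ meeting $p_e^{-1}I$. This set is enumerable given $I$ relative to $(A,e)$. Read the display at $m=0$ as $K^{A,e}_n(x\mid I)\lessapprox\log|\P|-n+O(\eps+\eta)n$. Read it at general $m$ as saying that inside $\P_1(I)\cap Q_m$, where $Q_m$ is the $2^{-m}$-cube containing $x$, the point $x$ has conditional complexity at most $\log|\P|-n-(s-1)m$. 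This is the algorithmic form of the Frostman condition with exponent $s-1$ for the slice.

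\textbf{Step 3: extracting $\P_2$.} Mimic the proof of Proposition \ref{prop:enumerableSset} rather than quoting it. Define $\P_2(I)$ as the cubes $Q\in\P_1(I)$ with
\[
K^{A,e}_{n,n,m}(Q\mid I,Q)\le K^{A,e}_{n,n,m}(x\mid p_ex,x)
\]
for each $m$ in the grid of multiples of $\lfloor\eps n\rfloor$, together with a complexity lower bound $K^{A,e}_n(Q\mid I)\ge\log|\P|-n-O(\eps+\eta)n$. Counting enumerable sets then gives two bounds:
\begin{itemize}
\item an upper bound $2^{\log|\P|-n-(s-1)m+O(\eps+\eta)n}$ on the number of cubes of $\P_2(I)$ in any $2^{-m}$-cube;
\item a lower bound $|\P_2(I)|\gtrsim 2^{\log|\P|-n-O(\eps+\eta)n}$, since $x\in\P_2(I)$ forces $|\P_2(I)|$ to be at least $2^{K^{A,e}_n(x\mid I)}$ up to logarithmic advice.
\end{itemize}
Combining these shows that $\P_2(I)$ is a $(\delta,s-1,\delta^{-O(\eps+\eta)})$-set of size $\gtrsim\delta^{1+O(\eps+\eta)}|\P|$. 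I would absorb $O(\eps+\eta)$ into the stated $o(1)$, or else choose $\eps,\eta$ tending to zero.

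\textbf{Step 4: many intervals.} Call an interval $I$ good if it contains $p_ex$ for some $x\in\P^\prime$. Each good $I$ carries at most $\lessapprox 2^{\log|\P|-n+O(\eps+\eta)n}$ points of $\P^\prime$, by Step 2 at $m=0$; it is cleaner to bound $|\P_1(I)|$ directly by the same counting. Since $|\P^\prime|\gtrsim|\P|$, there are at least $\delta^{-1+O(\eps+\eta)}$ good intervals.

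\textbf{Main obstacle.} The hard step is Step 3. The grid over $m$ and the conditioning on both $I$ and the coarse cube $Q_m$ must be handled so that the threshold-defined set is uniformly Frostman at every scale, not just the scales where the complexity inequality was checked. The resulting constants must also stay at $\delta^{-O(\eps+\eta)}$ rather than picking up $\log$ losses. I would first check the argument at a single scale $m$, then interpolate between grid scales as in the last paragraph of the proof of Proposition \ref{prop:enumerableSset}.
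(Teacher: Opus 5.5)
Your proposal is correct in outline and uses the same ingredients as the paper: typical cubes from Proposition \ref{prop:sSetToComplexities} with $B=e$, the moreover clause of Lemma \ref{lem:algSlicing}, and an enumeration-to-Frostman conversion. Steps 3 and 4 are organized differently.

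\textbf{Step 3.} The paper never re-runs the proof of Proposition \ref{prop:enumerableSset}. It uses symmetry of information to write
\[K^{A,e,I}_m(Q)\approx K^{A,e}_n(Q\mid I)-K^{A,e}_{n,n,m}(Q\mid I,Q).\]
It bounds the first term below by $K^{A,e}_n(Q)-n\approx\log|\P|-n$ and the second above by the fiber estimate. This gives $K^{A,e,I}_m(Q)\gtrapprox(s-1)m-O((\eps+\eta)n)$ for all $m\le n$. It then quotes Proposition \ref{prop:enumerableSset} relative to the oracle $(A,e,I)$ for the enumerable set $\P_1$. So the step you flag as the main obstacle (grid scales, interpolation, keeping constants at $\delta^{-O(\eps+\eta)}$) is absorbed into an already-proved result. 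Your threshold-defined $\P_2(I)$ is a hand-made version of the same construction; it works but costs more bookkeeping.

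\textbf{Step 4.} The paper takes one typical $Q$ and notes that the family of good intervals is enumerable from $(A,e)$. It then lower-bounds $K^{A,e}_n(I)\gtrapprox n-O((\eps+\eta)n)$ using the $m=0$ fiber bound and symmetry of information. Your pigeonhole argument is the counting form of the same incompressibility fact: every $x\in\P'$ yields a good interval, and each interval receives at most $2^{\log|\P|-n+O((\eps+\eta)n)}$ points of $\P'$. It is equally valid. It avoids checking that the family of good intervals is enumerable, at the price of running Steps 2--3 uniformly over all of $\P'$, which is fine since the constants are uniform.

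Three points need fixing.

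\begin{enumerate}
\item Drop the condition $K^{A,e}_n(Q\mid I)\ge\log|\P|-n-O(\eps+\eta)n$ from the definition of $\P_2(I)$. A lower bound on complexity is not an enumerable condition. With it, your justification that ``$x\in\P_2(I)$ forces $|\P_2(I)|\ge 2^{K^{A,e}_n(x\mid I)}$ up to logarithmic advice'' no longer applies. The condition is also unnecessary, since only the upper-bound thresholds enter the Frostman estimate.
\item You should not absorb $O(\eps+\eta)$ into the $o(1)$; that is not legitimate because $\eps,\eta$ are fixed. It is also not needed. The estimate $K^{A,e}_n(x\mid I)\gtrapprox K^{A,e}_n(x)-n\approx\log|\P|-n$ (the paper's \eqref{eq:QgivenQe}) loses only $O(\log n)$. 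The enumeration loses only $O(\eps^{-1}\log n)$ for the grid advice. Together these give $|\P_2(I)|\ge\delta^{1+o(1)}|\P|$ exactly as stated.
\item Your aside about bounding $|\P_1(I)|$ directly ``by the same counting'' does not work. Atypical cubes in the slab $p_e^{-1}I$ are not controlled by the fiber estimate, so keep the count restricted to $\P'$.
\end{enumerate}
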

\begin{proof}
     Let $e\in S^{d-1}$ such that
    \begin{equation*}
        K^A_n(e) \geq (d-1)n - 2\eps n.
    \end{equation*}
    Note that this implies that, for every $m \leq n$,
    $$K^A_m(e) \geq (d-1)m - 2\eps n - O(\log n).$$
    
    Let $Q\in\P$ such that $K^A_n(Q) \approx \log \vert \P\vert$, $K^A_m(Q) \geq sm - \eta n$ for every $m \leq n$ and such that $K^{A,e}_m(Q) \approx K^A_m(Q)$ for every $m\leq n$. By Proposition \ref{prop:sSetToComplexities}, such a $Q$ exists. Let $I$ be a dyadic $\delta$-interval meeting $p_e Q$. 

    Using the fact that  $I$ can be encoded into a finite string, and therefore putting it into an oracle is the same as using conditional complexities, and using symmetry of information, we can see that for every $m\leq n$, 
    \begin{align*}
        K^{A,e,I}_{m}(Q) &\approx K^{A,e}_{m,n}(Q\mid I)
        \approx K^{A,e}_n(Q\mid I) - K^{A,e}_{n,n,m}(Q\mid I, Q).\\
        \end{align*}
        For $K^{A,e}_n(Q\mid I)$ we use the trivial estimate $$K^{A,e}_n(Q\mid I)\approx K^{A,e}_n(Q) -K_n^{A,e}(I)\gtrapprox  K^{A,e}_n(Q) - n.$$Note that this immediately implies that
        \begin{equation}\label{eq:QgivenQe}
            K^{A,e}_n(Q\mid I) \gtrapprox \log |\P| - n.
        \end{equation}
        
        For estimating $K^{A,e}_{n,n,m}(Q\mid I, Q)$, note that by our assumptions on $e$ and $Q$, we can apply Lemma \ref{lem:algSlicing}. Applying the second part of the lemma we obtain
        $$K^{A,e}_{n,n,m}(Q\mid I, Q)\lessapprox K^A_n(Q) - n -(s-1)m +O((\eps +\eta) n).$$ Therefore
\begin{align*}
 K^{A,e,I}_{m}(Q)       &\gtrapprox K^{A,e}_n(Q) - n - (K^A_n(Q) - n -(s-1)m+O((\eps +\eta) n))\\
        &=(s-1)m - O((\eps +\eta) n).
    \end{align*}

Note that     \begin{equation*}
        \P_1 = \{R\in \P \mid R \cap p_e^{-1} I \neq \emptyset\}
    \end{equation*}
is enumerable given $A$, $e$ and $I$, and moreover, $\P_1$ contains $Q$. 

Therefore, by Proposition \ref{prop:enumerableSset}, the set $\P_1$ contains a $(\delta, s-1, \delta^{-O(\eps+\eta)})$-subset $\P_2$ with 
$$|\P_2|\ge 2^{K_n^{A,e,I}(Q)-O((\eps+\eta)^{-1}\log n)}.$$ Using \eqref{eq:QgivenQe}, this shows $|\P_2|\ge \delta^{1+o(1)}|\P|$. 

What remains to show is that the same estimates hold not only with our $I$, but also for many other dyadic intervals in $\R$. 

Given $A$ and $e$, the set of all dyadic intervals $I$ for which there is a set $\P_2$ that satisfies the requirements of our theorem, is enumerable. Therefore the number of dyadic intervals for which there is such a $\P_2$ is $\gtrsim 2^{K^{A,e}(I)}$.

From the second statement in Lemma \ref{lem:algSlicing}, with $m = 0$, we get $$K_n^{A,e}(Q\mid I)\lessapprox K_n^{A,e}(Q)-n+O((\eps+\eta) n).$$
Since given $e$ and $Q$ we also know $I$, therefore in this formula we have $K_n^{A,e}(Q)\approx K_n^{A,e}(Q,I)$, and then by the symmetry of information we get
    \begin{equation*}\label{eq:boundQe}
        K^{A, e}_n(I) \gtrapprox n - O((\eta + \eps)n).
    \end{equation*}
    Therefore indeed there are $\gtrsim 2^{ n - O((\eta + \eps)n)}=\delta^{-1+O(\eta + \eps)}$ many dyadic intervals $I$ for which the statement of our theorem holds.

\end{proof}

Our next lemma is a discretized analog of Marstrand's projection theorem. 
\begin{theorem}\label{thm:discretizedProjection}
    Let $t>0,\eps, \eta > 0$. For all sufficiently small dyadic scales $\delta$, the following holds. Let $\P$ be a $(\delta, t, \delta^{-\eta})$-set of $\delta$-cubes in $[0,1]^d$. Let $A$ be an oracle relative to which $\P$, $\delta = 2^{-n}$, $t$, $\eps$ and $\eta$ are computable. Then, for every $e \in \S^{d-1}$, with 
    \begin{equation*}
        K^A_n(e) \geq (d-1)n - \eps n,
    \end{equation*}
    the set 
    \begin{equation*}
        \P_1: = \D_\delta(\bigcup_{Q\in\P}\pi_{e\perp} Q)
    \end{equation*}
     contains a $(\delta, t^\prime, \delta^{-O(\eps+\eta)})$-subset $\P_2$ in $\R^{d-1}$ with $t^\prime = \min\{t, d-1\}$. Moreover,
     \begin{equation*}
         |\P_2| \gtrsim \delta^{-t^\prime + O(\eps + \eta)}
     \end{equation*}
\end{theorem}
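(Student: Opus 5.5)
We follow the template of the proof of Theorem \ref{thm:discretizedSlicing}, now using the first part of Lemma \ref{lem:algSlicing} in place of the second. Fix $e$ with $K^A_n(e)\ge (d-1)n-\eps n$; as there, $K^A_m(e)\ge (d-1)m-\eps n-O(\log n)$ for every $m\le n$, so hypothesis (i) of Lemma \ref{lem:algSlicing} holds. Apply Proposition \ref{prop:sSetToComplexities} to $\P$ with the oracle $B=e$. This gives a cube $Q\in\P$ with three properties: $K^A_n(Q)\approx\log|\P|$, $K^A_m(Q)\gtrapprox tm-\eta n$ for all $m\le n$, and $K^{A,e}_m(Q)\approx K^A_m(Q)$ for all $m\le n$. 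Let $x$ be a point of $Q$ that is computable from $Q$, for instance its centre. Then hypotheses (ii) and (iii) of Lemma \ref{lem:algSlicing} hold with $s=t$ and with $\eps$ replaced by $\eps+\eta$.

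The first conclusion of Lemma \ref{lem:algSlicing} then gives
\[
K^{A,e}_m(\pi_{e^\perp}x)\gtrapprox \min\{tm,(d-1)m\}-O((\eps+\eta)n)=t'm-O((\eps+\eta)n)\quad\text{for every } m\le n.
\]
We identify $e^\perp$ with $\R^{d-1}$. This requires fixing, computably from $e$ up to precision $n$, an orthonormal frame of $e^\perp$; choosing a computable representative $\delta$-cap direction for $e$ makes this cost only $O(\log n)$ bits. Under this identification $\pi_{e^\perp}x$ lies in some cube $R\in\P_1$.

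Next we note that $\P_1$ is enumerable, indeed computable, relative to $(A,e)$, because $\P$ is computable relative to $A$. We can therefore apply Proposition \ref{prop:enumerableSset} in dimension $d-1$ with oracle $(A,e)$, exponent $t'$, the point $\pi_{e^\perp}x$ and $\eta$ replaced by $O(\eps+\eta)$. This yields a $(\delta,t',\delta^{-O(\eps+\eta)})$-subset $\P_2\subseteq\P_1$ with
\[
|\P_2|\ge 2^{K^{A,e}_n(\pi_{e^\perp}x)-O((\eps+\eta)^{-1}\log n)}.
\]
Taking $m=n$ in the bound above gives $K^{A,e}_n(\pi_{e^\perp}x)\ge t'n-O((\eps+\eta)n)$. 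Hence $|\P_2|\gtrsim\delta^{-t'+O(\eps+\eta)}$ once $\delta$ is small enough that the $\log n$ losses are absorbed.

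The step we expect to need the most care is the first application of Lemma \ref{lem:algSlicing}. Its hypotheses are phrased with "$\gtrapprox\cdot-\eps n$" and the lemma was stated for fixed rational parameters, so we must track that all the error terms remain $O((\eps+\eta)n)$ uniformly; if necessary we shrink $\eps,\eta$ to rationals. We must also make sure that replacing the cube $Q$ by a point $x$, and the projected set by dyadic cubes in $\R^{d-1}$, changes complexities by only $O(\log n)$. A projection of a $\delta$-cube meets $O(1)$ cubes of $\D_\delta(\R^{d-1})$, so this costs nothing essential. With these checks in place, the rest is a direct combination of the earlier results.
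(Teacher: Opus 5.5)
Your proposal is correct and is essentially the paper's own proof: choose a typical $Q\in\P$ via Proposition~\ref{prop:sSetToComplexities} (with $B=e$), and apply the first part of Lemma~\ref{lem:algSlicing} to get $K^{A,e}_m(\pi_{e^\perp}Q)\gtrapprox t'm-O((\eps+\eta)n)$. Then conclude with Proposition~\ref{prop:enumerableSset} applied relative to $(A,e)$. The extra bookkeeping you add is detail the paper leaves implicit, and you handle it correctly: passing from $Q$ to its centre, deriving hypothesis (i) of the lemma from $K^A_n(e)$, using rational approximations of the parameters, and identifying $e^\perp$ with $\R^{d-1}$.
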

\begin{proof}
    As before, we encode $\P$, $\delta = 2^{-n}$, $t$, $\eps$ and $\eta$ into an oracle $A$, and first choose $e$ such that $K^A_n(e) \geq (d-1)n - \eps n$, and then choose $Q\in\P$ such that $K^A_n(Q) \approx \log \vert \P\vert$, $K^A_m(Q) \gtrapprox tm - \eta n$ for every $m \leq n$ and such that $K^{A,e}_m(Q) \approx K^{A}_m(Q)$ for every $m\leq n$. From the first part of Lemma \ref{lem:algSlicing}, we get 
    \begin{equation*}
        K^{A,e}_m(\pi_{e^\perp} Q) \gtrapprox\min\{tm, (d-1)m\} - O((\eps+\eta) n),
    \end{equation*}
    and the conclusion follows from Proposition \ref{prop:enumerableSset}.
\end{proof}

\section{Incidence bounds for hyperplanes and hyperplane Furstenberg sets}\label{sec:IncidenceHyperplanes}
In this section, we prove Theorem \ref{thm:hyperplanesIncidence}, our main result on hyperplane incidence bounds. In Section \ref{ssec:incidenceHyperplaneAlgorithmic}, we will reformulate this bound in the language of complexities. This will allow us to prove our two applications of Theorem \ref{thm:hyperplanesIncidence}, to hyperplane Furstenberg sets and to pinned distance sets. 

\bigskip

\subsection{Discretized version}\label{ssec:discretizedworld}

We will prove Theorem \ref{thm:hyperplanesIncidence} by induction on the dimension of the space $\R^d$, with Theorem \ref{thm:RenWang} as a base case. Our inductive step will rely on Lemma \ref{lem:inductionHelpPlanes2} below.
In order to simplify the statement of our lemma, we introduce the following definition.

\medskip
We say that a pair $(\P,\H) \subset \D_\delta([0,1]^d) \times \H^\delta$ is a $(\delta, s, \delta^{-\eta}, M)$-nice configuration, if for each $Q\in P$ there are $\sim M$ many $\delta$-hyperplanes in $\H$ through $Q$, with the property that the set of directions of these $\delta$-hyperplanes is a $(\delta, s, \delta^{-\eta})$-set. (More precisely, for each $H^\perp$ we fix a $\delta$-cube that meets $H^\perp$, and we require that through each $Q\in\P$ there are $M$ different $\delta$-hyperplanes in $\H$ such that the $\delta$-cubes of the normal vectors of these $\delta$-hyperplanes are distinct and their union is a $(\delta, s, \delta^{-\eta})$-set of $\delta$-cubes.)

\medskip
We often find it useful to identify the set of directions in $\R^d$, i.e. the sphere $S^{d-1}$, by $\R^{d-1}$, via radial projections. This can be done, say, by fixing an affine hyperplane $H$ at distance 1 from the origin, and then identifying each direction by the point at which the line through the origin of this direction hits the plane $H$. We will refer to this mapping as the 'radial projection onto $H$'.

A technical issue that we need to be careful about is that not every line meets $H$ (those whose direction is parallel to $H$ don't), and moreover, although this mapping from the sphere to $H$ is locally bi-Lipschitz, and therefore it preserves many nice properties, unfortunately it gets distorted as the directions of the lines are getting close to being parallel to $H$. Therefore, it is \emph{not} true that the radial image of a $(\delta, s, \delta^{-\eta})$-set of directions is a $(\delta, s, \delta^{-\eta})$-set on $H$.

However, by throwing away a constant portion of our $(\delta, s, \delta^{-\eta})$-set of directions, we can find a plane $H$ such that each remaining direction has at least a constant angle with this plane $H$ and therefore our mapping will be bi-Lipschitz with an absolute constant. Noting also that a constant portion of a   $(\delta, s, \delta^{-\eta})$-set is a $(\delta, s, c\delta^{-\eta})$-set, this argument shows that for each
 $(\delta, s, \delta^{-\eta})$-set of directions there is a plane $H$ such that the radial projection onto this plane $H$ contains a $(\delta, s, c\delta^{-\eta})$-set of comparable size. Moreover, we can find not only one $H$ with this property but this will be true for a constant portion of all affine hyperplanes $H$ (among those whose distance from the origin is 1). Every constant in this observation is an absolute constant that depends only on the dimension $d$. 
 
 We will make use of this observation in the proof of the following lemma.

\begin{lemma}\label{lem:inductionHelpPlanes2}
    For every $s \in (1, d-1]$, $t\in (0,d]$, and $\eta > 0$ there is a $\delta_0$ such that the following holds for every $\delta < \delta_0$. If $(\mathcal{P}, \H)$ is a $(\delta, s, \delta^{-\eta}, M)$-nice configuration and $\P$ is a $(\delta, t, \delta^{-\eta})$-set, then there is a set of $\delta$-cubes $\P^\prime$ in $[0,1]^{d-1}$ and a set of $\delta$-hyperplanes $\H'$ in $\R^{d-1}$ such that
    \begin{enumerate}
        \item[(i)] $(\P^\prime, \H^\prime)$ is a $(\delta, s-1, \delta^{-O(\eta)}, \delta^{1+O(\eta)} M)$-nice configuration;
        \item[(ii)]$\P^\prime$ is a $(\delta, t^\prime, \delta^{-O(\eta)})$-set with $t^\prime = \min\{t, d-1\}$ and $|\P'|\gtrsim \delta^{-t'+O(\eta)}$; and 
        \item[(iii)]$\vert \H^\prime \vert \lesssim \delta^{1-O(\eta)}\vert \H \vert$.
    \end{enumerate}
\end{lemma}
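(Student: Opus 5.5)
We reduce the dimension by one: choose a generic direction $e\in\S^{d-1}$, project the cubes along $e$ onto $e^\perp\cong\R^{d-1}$, and keep only those hyperplanes of $\H$ whose normals are (almost) orthogonal to $e$, i.e. hyperplanes containing the direction $e$. Such a hyperplane $H$ is $\pi_{e^\perp}^{-1}$ of a hyperplane of $e^\perp$. Thus a $\delta$-hyperplane $H$ with $H^\perp\subset e^\perp$ passing through $Q$ descends to a $\delta$-hyperplane $\pi_{e^\perp}H$ of $\R^{d-1}$ passing through $\pi_{e^\perp}Q$ (up to a constant thickening). We take $\H'$ to be the set of these descended hyperplanes and $\P'$ a suitable subset of $\D_\delta(\bigcup_{Q\in\P}\pi_{e^\perp}Q)$.

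We encode $\P,\H,\delta,s,t,\eta$ into an oracle $A$ and fix $e$ with $K^A_n(e)\ge(d-1)n-\eps n$, where $\eps=\eta$. Such $e$ form all but a $\delta^{\eps}$-proportion of directions, so we may also impose finitely many further genericity conditions of the same type.

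Item (iii) is then immediate from Lemma \ref{lem:discretizedIntersection}: the hyperplanes with $H^\perp\subset e^\perp$ number $\lesssim\delta^{1-O(\eta)}|\H|$, and the projection map $H\mapsto\pi_{e^\perp}H$ is at most $O(1)$-to-one on them.

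For (ii), Theorem \ref{thm:discretizedProjection} gives a $(\delta,t',\delta^{-O(\eta)})$-subset $\P_2$ of the projected cubes with $|\P_2|\gtrsim\delta^{-t'+O(\eta)}$. We will need to pass to a further subset on which (i) holds. This costs only a $\delta^{O(\eta)}$ fraction if we take $\P'$ to be the cubes of $\P_2$ that receive good hyperplanes, provided this is a positive proportion. The cleanest route is to argue algorithmically: pick a random $Q\in\P$ as in Proposition \ref{prop:sSetToComplexities}, verify the property for its projection, and then use enumerability and Proposition \ref{prop:enumerableSset} to produce $\P'$ with the same size and non-concentration.

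The main obstacle is (i). For a fixed $Q$, the normals of its $M$ hyperplanes form a $(\delta,s,\delta^{-\eta})$-set $\Theta_Q\subset\S^{d-1}$. We need that $\Theta_Q\cap e^\perp$, i.e. the $\delta$-slice of $\Theta_Q$ by the great sphere $e^\perp$, contains a $(\delta,s-1,\delta^{-O(\eta)})$-subset of size $\gtrsim\delta^{1+O(\eta)}M$. Here $e^\perp$ is a shift-free slice through the origin, so the appropriate tool is a variant of Theorem \ref{thm:discretizedSlicing} applied after radially projecting $\Theta_Q$ onto an affine hyperplane at distance $1$. Using the observation preceding the lemma, this projection is bi-Lipschitz on a constant portion of the normals, which turns $\Theta_Q$ into an $s$-set in $\R^{d-1}$. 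Under this identification, great spheres $e^\perp$ become affine hyperplanes in $\R^{d-1}$, and $s-1>0$ is exactly the condition that makes Mattila-type slicing work.

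I would redo the proof of Theorem \ref{thm:discretizedSlicing} via the moreover part of Lemma \ref{lem:algSlicing}. Take a typical normal $\theta\in\Theta_Q$ (typical relative to $A$ and $Q$), with $e$ independent of $(Q,\theta)$, conditioned so that $\theta\perp e$. That lemma yields $K^{A,e,Q}_m(\theta\mid\text{slice})\gtrapprox(s-1)m-O(\eta n)$, and Proposition \ref{prop:enumerableSset} then gives the slice subset of size $\delta^{1+O(\eta)}M$.

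The delicate point is choosing a single $e$ that works simultaneously for most $Q$ and for the hyperplane slice through the origin rather than for a generic translate. I would handle it by counting: the set of bad $(e,Q)$ pairs has small measure for each $Q$, so by Fubini some $e$ is good for all but a $\delta^{\eta}$ fraction of $Q$. Finally, refining $M$ by dyadic pigeonholing makes the counts $\sim\delta^{1+O(\eta)}M$ uniform over $\P'$.
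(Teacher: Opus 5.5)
Your setup is the paper's: a generic $e$, cubes projected to $e^\perp$, hyperplanes with $H^\perp\subset e^\perp$ kept, (iii) from Lemma \ref{lem:discretizedIntersection}, and the per-cube slice obtained by radially projecting the normals onto an affine hyperplane $H_0$. There $H_0\cap e^\perp$ is an ordinary affine slice, so Theorem \ref{thm:discretizedSlicing} applies as stated and need not be redone.

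The gap is exactly at the step you call delicate. Your Fubini premise, that for each $Q$ the bad directions have small measure, is false. The normals $\Theta_Q$ can lie in a cap of radius $\delta^{\eta/s}$ and still form a $(\delta,s,\delta^{-\eta})$-set. Then $e^\perp$ meets $\Theta_Q$ only for $e$ in a $\sim\delta^{\eta/s}$-neighbourhood of one great sphere. If the cap centres are spread out as $Q$ varies, every $e$ is good for at most a $\sim\delta^{\eta/s}$-fraction of $\P$, far from all but a $\delta^{\eta}$-fraction. (Theorem \ref{thm:discretizedSlicing} likewise only gives $\gtrsim\delta^{-1+O(\eta)}$ good offsets per direction.) So at best you get an $e$ whose good family $\P_1(e)$ has relative size $\delta^{O(\eta)}$ and depends on $e$, and your route to (ii) does not close:
\begin{enumerate}
\item Theorem \ref{thm:discretizedProjection} cannot be applied to $\P_1(e)$ with this $e$. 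Its genericity hypothesis is relative to an oracle computing the set, and $\P_1(e)$ is built from $e$.
\item Nothing forces the images of good cubes to fill a $\delta^{O(\eta)}$-fraction of the $\P_2$ produced from all of $\P$. You note this ``positive proportion'' is needed but never prove it.
\item In your algorithmic variant, $e$ is fixed first and $Q$ is chosen as in Proposition \ref{prop:sSetToComplexities} relative to $(A,e)$, so $K^{A,e}_n(Q)\approx\log|\P|$. Such a $Q$ cannot lie in the $(A,e)$-computable set $\P_1(e)$ once $|\P_1(e)|\le\delta^{c\eta}|\P|$, so there is nothing to verify.
\end{enumerate}

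The missing idea is the order of choices. The paper fixes a typical $Q$ first. The set of directions good for $Q$ is computable from $A$ and $Q$ (the witnessing families $H(Q)$ are in the oracle) and has $\gtrsim\delta^{-(d-1)+O(\eta)}$ elements. So one can pick a good $e$ with $K^A_n(e\mid Q)\ge(d-1)n-O(\eta n)$, which gives (iii). By symmetry of information $Q$ is random relative to $e$, so the first part of Lemma \ref{lem:algSlicing} yields $K^{A,e}_m(\pi_{e^\perp}Q)\gtrapprox t'm-O(\eta n)$. Since $\pi_{e^\perp}\P_1(e)$ is enumerable given $A$ and $e$ and contains $\pi_{e^\perp}Q$, Proposition \ref{prop:enumerableSset} produces $\P'\subset\pi_{e^\perp}\P_1(e)$ satisfying (i) and (ii) at once. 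The ``one $e$ for many $Q$'' problem is absorbed by enumerability, with no Fubini step and no use of Theorem \ref{thm:discretizedProjection}.

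You could also keep your order. The correct weak form of Fubini gives an $e$ with $K^A_n(e)\ge(d-1)n-O(\eta n)$ and $|\P_1(e)|\ge\delta^{O(\eta)}|\P|$. You would then choose $Q$ random inside $\P_1(e)$ relative to $(A,e)$ before applying Lemma \ref{lem:algSlicing} and Proposition \ref{prop:enumerableSset}.
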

\begin{proof}
We will show that there is a direction $e\in S^{d-1}$, such that with a (carefully chosen) subset $$\P'\subset\{\pi_{e^\perp} Q\mid Q\in\P\}$$ and with $$\H':=\{H\cap e^\perp\mid H\in \H, H^\perp\in e^\perp\}$$ the lemma holds (identifying $e^\perp$ with $\R^{d-1}$). 

{\bf Property (iii)}: By Lemma \ref{lem:discretizedIntersection}, (iii) holds with most $e\in S^{d-1}$. 
Our aim is to find $e$ and $\P'$ such that also (i) and (ii) hold.

{\bf Property (i)}: We encode $\P$, $\H$, $\delta = 2^{-n}$, $s$, $t$, and $\eta$ into an oracle $A$. In addition, we encode, for every $Q \in \P$, a witnessing family of $\delta$-hyperplanes $\H(Q)\subset\H$ through the cube $Q$ given by our nice definition. We fix a cube $Q \in \P$ such that $K^A_n(Q) \approx \log |\P|$, and for all $m\leq n$ we have $K^A_m(Q) \gtrapprox tm - \eta n$. By Proposition \ref{prop:sSetToComplexities}  such a $Q$ exists.    

By our discussion at the beginning of this section, we know that there is an affine hyperplane $H_0$ in distance 1 from the origin such that the radial projection onto $H_0$ of the set of normal vectors of the $\delta$-hyperplanes in $H(Q)$ contains a $(\delta, s, c\delta^{-\eta})$-set of $\sim M$ many $\delta$-cubes in $H_0$. Applying our slicing result, Theorem \ref{thm:discretizedSlicing} to this set of $\delta$-cubes in $H_0$, we can see that most 1-codimensional affine subspaces in $H_0$ (that is, for most $e\in S^{d-1}$, the affine hyperplane $H_0\cap e^\perp$) contains a $(\delta,s-1,\delta^{-O(\eta})$-subset of $\gtrsim \delta^{1+O(\eta)}M$ many $\delta$-cubes. In particular, for most directions $e\in S^{d-1}$, the set
$$\{H\in H(Q)\mid H^\perp\subset e^\perp\}$$ contains a $(\delta,s-1,\delta^{-O(\eta)})$-subset of $\gtrsim \delta^{1+O(\eta)}M$ many $\delta$-hyperplanes. This immediately implies that we have a $(\delta,s-1,\delta^{-O(\eta)})$-subset of $\gtrsim \delta^{1+O(\eta)}M$ many $\delta$-hyperplanes in $\H'$ through $\pi_{e^\perp}Q$. 

In order to satisfy (i), we need this to hold not only for $\pi_{e^\perp}Q$ with our fixed cube $Q$ but for every cube in $\P'$. Therefore, let $\P_1$ denote the set of all cubes $R\in P$ for which $$\{H\in H(R)\mid H^\perp\subset e^\perp\}$$ contains a $(\delta,s-1,\delta^{-O(\eta})$-subset of $\gtrsim \delta^{1+O(\eta)}M$ many $\delta$-hyperplanes. We will choose $\P'\subset\pi_{e^\perp}\P_1$, and therefore (i) will hold. What remains is to check that there is an $e$ for which $\pi_{e^\perp} \P_1$ contains a set $\P'$ for which (ii) holds. 

{\bf Property (ii)}: We have already seen that, for most $e$, we have (iii) and $Q\in\P_1$. We fix an $e$ such that indeed (iii) holds, $Q\in\P_1$, and moreover, we choose $e$ such that 
\begin{equation}\label{eq:choiceOfe}
K_n^A(e\mid Q)\ge (d-1)n-O(\eta n).
\end{equation}
We may choose $e$ so that it is computable given its first $n$ bits. Note that, since conditioning on $Q$ can only decrease the complexity of $e$, \eqref{eq:choiceOfe} immediately implies that $K^A_n(e) \geq (d-1) n - O(\eta n)$. By symmetry of information, for any $m \leq n$, 
\begin{align*}
    K^A_n(e) &\approx K^A_{m}(e) + K^A_{n,m}(e\mid e)\\
    &\lessapprox K^A_m(e) + (d-1)(n-m),
\end{align*}
and so $K^A_m(e) \gtrapprox (d-1)m - O(\eta n)$. In addition, \eqref{eq:choiceOfe} shows that $Q$ gives little information about $e$. So, by symmetry of information, $e$ gives little information about $Q$, in particular,
\begin{equation}
    K^A_n(Q \mid e) \geq K^A_n(Q) - O(\eta n).
\end{equation}
Thus, we can see that (i)-(iii) of Lemma \ref{lem:algSlicing} hold with $s=t$, $\eps=O(\eta)$ and $x=Q$. 
From Lemma \ref{lem:algSlicing} we get
\begin{equation}\label{eq:boundProjQ}
    K^{A,e}_m(\pi_{e^\perp} Q) \gtrapprox \min\{t, d-1\}m - O(\eta n)
\end{equation}
for every $m\leq n$. And since $\pi_{e^\perp} Q$ is in $\pi_{e^\perp} \P_1$ and  $\pi_{e^\perp} \P_1$ is enumerable, by \eqref{eq:boundProjQ} and Proposition \ref{prop:enumerableSset} it has a 
$(\delta, t^\prime, \delta^{-O(\eta)})$-subset $\P'$ with 
\begin{equation}
    |\P'|\geq 2^{K^{A,e}_n(\pi_{e^\perp}(Q)) - O(\eta^{-1}\log n)} \gtrsim \delta^{-t^\prime + O(\eta)}.
\end{equation}
\end{proof}

We are now able to prove our main theorem.
\begin{proof}[Proof of Theorem \ref{thm:hyperplanesIncidence}]
As we already mentioned it in the introduction, the planar case is a famous result of Ren and Wang. By induction, we now assume that it is true in $\R^{d-1}$, and we check that it holds in $\R^d$. 

Let $(\P, \H)$ be a $(\delta, s, \delta^{-\eta}, M)$-nice configuration in $\R^{d}$ and $\P$ be a $(\delta, t, \delta^{-\eta})$-set. Assume that $\delta$ is small enough so that we can apply 
Lemma \ref{lem:inductionHelpPlanes2} to $(\P, \H)$. Then the lemma gives us $(\P^\prime, \H^\prime)$ in $\R^{d-1}$. We also assume that $\eta$ and $\delta$ are small enough so that we can apply
 Theorem \ref{thm:hyperplanesIncidence} to  $(\P^\prime, \H^\prime)$, more precisely, we assume that we can apply Theorem \ref{thm:hyperplanesIncidence} in $\R^{d-1}$ with $\eps/2$, $s-1$, $t'$, $c\eta$ and $\delta$ (where $c$ is the constant in the $O(\eta)$ terms in the statement of Lemma \ref{lem:inductionHelpPlanes2}). For simplicity, we also assume that $\eta<\eps/2$. Then, 
since $|\H| \gtrsim \delta^{-1+c\eta}|\H^\prime|$ and $|\H'|\gtrsim\delta^{-\min\{t^\prime, \frac{s - 1 +t^\prime - ((d-1)-2)}{2}, 1\} + \eps/2}\delta M$, therefore

$$ |\H|\gtrsim \delta^{-\min\{t^\prime, \frac{s +t^\prime - (d-2)}{2}, 1\} + \eps} M.$$

    If $t'=t$, then the conclusion follows immediately. And with $t'=d-1$ we get $$\min \{t', \frac{s +t^\prime - (d-2)}{2}, 1\}=1=\min \{t, \frac{s +t - (d-2)}{2}, 1\}$$ 
and the conclusion again follows immediately.
\end{proof}

\bigskip

\subsection{Algorithmic incidences}\label{ssec:incidenceHyperplaneAlgorithmic}
In this section, we use Theorem \ref{thm:hyperplanesIncidence} to establish bounds in the language of Kolmogorov complexity. 

The statement and the proof of our Theorem \ref{thm:algorithmicIncidenceHyperplanes} below is very similar to Theorem 3.5 of \cite{CsoStu25}, where we used the planar version of Theorem \ref{thm:hyperplanesIncidence}. 

\begin{theorem}\label{thm:algorithmicIncidenceHyperplanes}
    For every $s, t,\eps > 0$, with $ d-2<s\leq d-1$, $0<t\leq d$ there is an $\eta > 0$ and $n_0 \in \N$ such that the following holds for all $n \geq n_0$ and every oracle $A$. For every $y\in [0,1]^d$ and affine hyperplane $H$ in $\R^d$ containing $y$, if 
    \begin{enumerate}
        \item[(i)] $K^A_{k}(y) > tk - \eta n$ for all $k \leq n$, and
        \item[(ii)] $K^A_{k,n}(H \mid y) > sk - \eta n$ for all $k \leq n$
    \end{enumerate}
    then
    \begin{equation*}
        K^A_{n}(H) \gtrapprox K^A_{n}(H\mid y) + \alpha n - \eps n
    \end{equation*}
    with $\alpha: = \min\{t, \frac{s+t -(d-2)}{2}, 1\}.$
\end{theorem}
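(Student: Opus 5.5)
We argue by contradiction and reduce to the discretized incidence bound, Theorem \ref{thm:hyperplanesIncidence}, in the same way as Theorem 3.5 of \cite{CsoStu25}. Fix $\eps$, choose $\eta_0$ from Theorem \ref{thm:hyperplanesIncidence} applied with $\eps/2$, $s$, $t$, and take $\eta\ll\eta_0$. Suppose that
$$K^A_n(H) < K^A_n(H\mid y) + \alpha n - \eps n.$$
Put $\delta=2^{-n}$. The goal is to build an enumerable configuration from the low-complexity data that violates \eqref{eq:hyperplanesIncidenceBound}.

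\textbf{Step 1: the hyperplane family.} Let $\H$ be the set of $\delta$-hyperplanes $H'$ with $K^A_n(H')\le K^A_n(H)$. This set is enumerable given $A$ and $O(\log n)$ bits, and $|\H|\lesssim 2^{K^A_n(H)}$.

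\textbf{Step 2: the point family.} Let $\P$ be the set of cubes $Q$ of side $\delta$ for which the following holds. Consider the set
$$H(Q)=\{H'\in\H : H'\cap Q\ne\emptyset,\ K^A_{n}(H'\mid Q)\le K^A_n(H\mid y)+O(\log n),\ K^A_{k,n}(H'\mid Q)\ge sk-\eta n \text{ for } k\le n\}.$$
We require that $H(Q)$ has size at least $2^{K^A_n(H\mid y)-O(\eta n)}$. We also impose the complexity profile $K^A_k(Q)\ge tk-\eta n$.

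The cube $Q_y$ containing $y$ belongs to $\P$. To see this, enumerate the hyperplanes through $Q_y$ with the conditional-complexity constraints. Since $H$ lies in this enumerable set and $K^A_n(H\mid y)$ is essentially maximal, the counting principle from Section \ref{ssec:prelimAlgMethods1} shows that the set has size $\gtrsim 2^{K^A_n(H\mid y)-O(\log n)}$.

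The set $\P$ is enumerable given $O(\log n)$ bits: the numbers $K^A_n(H)$ and $K^A_n(H\mid y)$, plus the relevant thresholds. Apply Proposition \ref{prop:enumerableSset} to $y$, using hypothesis (i). This gives a $(\delta,t,\delta^{-O(\eta)})$-subset $\P'\subseteq\P$.

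\textbf{Step 3: Frostman sets of hyperplanes through each cube.} For each $Q\in\P'$ we need a $(\delta,s,\delta^{-O(\eta)})$-set inside $H(Q)$ of size $\sim M:=2^{K^A_n(H\mid y)-O(\eta n)}$. Apply Proposition \ref{prop:enumerableSset} in the dual space, relative to the oracle $(A,Q)$. The dual points form a set of dimension $d$, and the hyperplanes through $Q$ form a $(d-1)$-dimensional family. The role of $x$ is played by a high-complexity element of $H(Q)$, whose complexity profile satisfies $K_{k,n}\ge sk-\eta n$ by the definition of $H(Q)$. This produces the Frostman subsets. We then prune them to a common size $\sim M$ by dyadic pigeonholing, losing only $\log$ factors.

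\textbf{Step 4: the contradiction.} Theorem \ref{thm:hyperplanesIncidence} now gives
$$|\H|\gtrsim \delta^{-\alpha+\eps/2}M = 2^{\alpha n-\eps n/2+K^A_n(H\mid y)-O(\eta n)}.$$
Since $\eta\ll\eps$, this contradicts $|\H|\lesssim 2^{K^A_n(H)}$ together with the assumed upper bound on $K^A_n(H)$.

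\textbf{Main obstacle.} The delicate step is Step 3. Proposition \ref{prop:enumerableSset} requires the witness point to satisfy the lower complexity bounds at every scale, relative to the conditioning oracle. Here the conditioning is on the cube $Q$ at precision $n$, and the hypothesis (ii) is stated for $y$ rather than for generic $Q\in\P'$.

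The fix is to build the scale-wise lower bounds $K_{k,n}(H'\mid Q)\ge sk-\eta n$ directly into the definition of $H(Q)$ and of $\P$. Both sets stay enumerable because these are lower bounds on complexity. That makes them co-enumerable, so we replace them by the standard trick used in \cite{CsoStu25}: enumerate the upper bounds $K_{n,k}(H'\mid H',Q)\le K_{n,k}(H\mid H,y)$, exactly as in the proof of Proposition \ref{prop:enumerableSset}.

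We must also check that restricting to the $(\delta,t)$-subset $\P'$ does not destroy the uniform size $M$. We handle this by intersecting the defining conditions before applying Proposition \ref{prop:enumerableSset}.
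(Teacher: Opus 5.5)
Your argument is correct and is essentially the paper's proof. Both use the same low-complexity family $\H$, an enumerable family of cubes shown to contain $Q_y$ using hypothesis (ii), Proposition \ref{prop:enumerableSset} applied to that family with witness $y$ via hypothesis (i), and then Theorem \ref{thm:hyperplanesIncidence} with $\eta\ll\eta_0$.

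The difference is only bookkeeping. The paper avoids your Step 3 obstacle by letting $\P$ consist of the cubes through which $\H$ contains some $(\delta,s,2^{O(\eta n)})$-subfamily of size at least $2^{K^A_n(H\mid y)-O(\eta^{-1}\log n)}$. This is an enumerable condition, checked at $Q_y$ by the dual of Proposition \ref{prop:enumerableSset}. In your version, the upper-bound definition of $H(Q)$ makes each $H(Q)$ itself Frostman by the counting in that proposition's proof, which also works provided you drop the condition $K^A_k(Q)\ge tk-\eta n$ from $\P$: it is not enumerable, and it is unnecessary.
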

\begin{proof}
    Let $\eta$ be the parameter given by Theorem \ref{thm:hyperplanesIncidence}, and we choose $n_0$ large enough so that $2^{-n_0} < \delta$. Let $Q_n$ denote the dyadic cube containing $y$ with side-length $2^{-n}$. Define 
    \begin{equation*}
        \mathcal{H} = \{H^\prime \in \mathcal{H}^{2^{-n}} \mid K^A(H^\prime) \lessapprox K^A_{n}(H)\},
    \end{equation*}
    and for every dyadic cube $Q$, denote
    \begin{equation*}
        n_Q = \#\{H^\prime \in \mathcal{H} \mid H^\prime \cap Q \neq \emptyset\}.
    \end{equation*}
    Since $\mathcal{H}$ is enumerable given $K^A_{n}(H)$, and the $2^{-n}$-hyperplane containing $H$ is in $\mathcal{H}$, we have
    \begin{equation}\label{eq:sizeOfH}
        \vert \mathcal{H} \vert = 2^{K^A_{n}(H) + O(\log n)}.
    \end{equation}
 
    We note that
    \begin{equation}
        K^A_{n}(H\mid y) \lessapprox \log n_{Q_n} + O(\log n).
    \end{equation}

    Define the set 
$$\P_1 = \{Q\in \D_{2^{-n}}[0,1]^d \mid K^A(Q) \lessapprox K^A_{n}(y), n_Q \geq 2^{K^A_n(H\mid y) - O(\log n)}\}.$$
Then $Q_n\in\P_1$, and $\P_1$ is enumerable, given $n$, $K^A_n(H)$, $K^A_{n}(y)$ and  $K^A_n(H\mid y)$. 

We fix a rational $\eta_0$ which will be specified later. Define the set $\P$ to be the set of those $Q\in \P_1$ that satisfy the following.  There is a dyadic $(2^{-n}, s, 2^{O(\eta_0 n)})$-set $H(Q)\subseteq \H$ with 
 \begin{equation*}
     \vert H(Q)\vert \geq 2^{K^A_n(H\mid y) - O(\eta_0^{-1}\log n)}
 \end{equation*}
 such that $H \cap Q \neq \emptyset$ for all $H\in H(Q)$. The constants in the $O(\cdot)$ terms will come from Proposition \ref{prop:enumerableSset}.

The set $H(Q(y)) = \{H^\prime \in \H \mid H^\prime \cap Q(y) \neq \emptyset\}$ is enumerable given $K^A_{n}(H)$, and contains the $2^{-n}$-hyperplane containing $H$. Therefore, by the dual of Proposition \ref{prop:enumerableSset}, applied to $H(Q(y))$, we see that $H(Q(y))$ contains a subset $H^\prime$ which is a dyadic $(2^{-n}, s, 2^{O(\eta_0 n)})$-set and such that 
\begin{equation*}
    \vert H^\prime \vert \geq 2^{K^A_{n}(H\mid y) - O(\eta_0^{-1} \log n)}.
\end{equation*}

 Therefore we see that $Q(y) \in \P$. Furthermore, $\P$ is computably enumerable, given $s$, $\eta_0$, $K^A_{n}(y)$, $K^A_{n}(H)$ and $K^A_n(H\mid y)$. Since $K(\eta_0,s)=O(1)$, and the other inputs have length $O(\log n)$, by applying Proposition \ref{prop:enumerableSset} to $\P$, we obtain a set $\P^\prime \subseteq\P$ such that $\vert \P^\prime\vert \geq 2^{K^A_{n}(y) - O(\eta_0^{-1} \log n)}$ and $\P^\prime$ is a dyadic $(2^{-n}, t, 2^{O(\eta_0 n)})$-set. 

We now choose $\eta_0$ small enough so that we may apply Theorem \ref{thm:hyperplanesIncidence} to $\P^\prime$ with parameters $s$, $t$ and $\eps$ and $\eta:=(\log C')/n\sim O(\eta_0)$. Applying Theorem \ref{thm:hyperplanesIncidence} yields 
\begin{equation*}
    \vert \H \vert \gtrsim_\eps 2^{\min\{t, \frac{s+t -(d-2)}{2}, 1\}n - \eps n}2^{K^A_n(H\mid y) - O(\eta^{-1}\log n)}.
\end{equation*}
In particular, by \eqref{eq:sizeOfH}, we see that
\begin{align*}
    K^A_{n}(H) &\gtrapprox K^A_{n}(H\mid y) +\min\{t, \frac{s+t -(d-2)}{2}, 1\}n\\
    &\;\;\;\;\;\;\;\;\;\;\;  - \eps n - O(\eta^{-1}\log n),
\end{align*}
and the conclusion follows.
\end{proof}

By rescaling, we immediately have the following corollary.
\begin{corollary}\label{cor:algorithmicIncidenceHyperplanes2}
    For every $s, t,\eps > 0$, with $ d-2<s\leq d-1$, $0<t\leq d$ there is an $\eta > 0$ and $n_0 \in \N$ such that the following holds for all $n,m$ with $n-m \geq n_0$ and for every oracle $A$. For every $y\in [0,1]^d$ and affine hyperplane $H$ in $\R^d$ containing $y$, if
    \begin{enumerate}
        \item[(i)] $K^A_{k, m}(y\mid y) > t(k-m) - \eta (n-m)$ for all $m\leq k \leq n$, and
        \item[(ii)] $K^A_{k,n}(H \mid y) > sk - \eta (n-m)$ for all $k \leq n-m$
    \end{enumerate}
    then
    \begin{equation*}
        K^A_{n,m}(H\mid y) \gtrapprox K^A_{n-m,n}(H\mid y) + \alpha(n-m) - \eps (n-m)
    \end{equation*}
    with $\alpha: = \min\{t, \frac{s+t -(d-2)}{2}, 1\}.$
\end{corollary}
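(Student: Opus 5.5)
The plan is to deduce the corollary from Theorem \ref{thm:algorithmicIncidenceHyperplanes} by zooming in on the dyadic cube of side $2^{-m}$ containing $y$ and relativizing to the coarse-scale information. Concretely, I let $Q_m$ be the dyadic $2^{-m}$-cube containing $y$, and let $S$ be the homothety mapping $Q_m$ onto $[0,1]^d$, $S(z)=2^m(z-z_0)$ with $z_0$ the corner of $Q_m$. Set $\tilde y=S(y)$, $\tilde H=S(H)$ (which contains $\tilde y$), and $N=n-m$. I enlarge the oracle to $A'=(A,\langle y\rangle_m)$, i.e. I add to $A$ the $O(m)$-bit string describing $Q_m$. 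With respect to $A'$, the map $S$ is computable, so precision-$k$ information about $\tilde y$ is, up to $O(\log n)$, precision-$(k+m)$ information about $y$ given $Q_m$.

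I will verify the hypotheses of Theorem \ref{thm:algorithmicIncidenceHyperplanes} for $\tilde y,\tilde H$ at precision $N$ with oracle $A'$. For (i), $K^{A'}_{k}(\tilde y)\approx K^A_{k+m,m}(y\mid y)>tk-\eta N$ for $k\le N$, directly from hypothesis (i) of the corollary. For (ii), I need $K^{A'}_{k,N}(\tilde H\mid\tilde y)>sk-\eta N$. The point is that a hyperplane through $y$ rescaled by $2^m$ keeps its normal direction, and its offset is determined by the point. So the information in $\tilde H$ at precision $k$ given $\tilde y$ at precision $N$ is essentially the direction $H^\perp$ at precision $k$, which is the same as $K^A_{k,n}(H\mid y)$. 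Hypothesis (ii) then gives the bound. I expect some care to be needed here, because $\delta$-hyperplanes are parametrized by dual coordinates, and rescaling changes the offset coordinate by a factor $2^m$. Given the point, though, only the direction matters, and that costs $O(\log n)$.

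Applying the theorem yields
\[K^{A'}_N(\tilde H)\gtrapprox K^{A'}_N(\tilde H\mid\tilde y)+\alpha N-\eps N.\]
It remains to translate back. On the left, $K^{A'}_N(\tilde H)$ corresponds to describing $H$, or rather its restriction to $Q_m$, at the relative precision $n$ given the cube $Q_m$. That is $K^A_{n,m}(H\mid y)$ up to logarithmic error: knowing $y$ at precision $m$, the rescaled hyperplane at precision $N$ determines $H$ near $Q_m$ at precision $n$, and conversely. On the right, $K^{A'}_N(\tilde H\mid\tilde y)\approx K^A_{N,n}(H\mid y)$, since given $y$ at precision $n$ only the normal direction at precision $N=n-m$ is needed. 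This gives exactly the claimed inequality.

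I expect the main obstacle to be this bookkeeping of how the hyperplane's complexity transforms under the zoom. In particular, I need to check that the direction precision is $n-m$, not $n$, when $H$ is only known relative to the $2^{-m}$-cube. I would first check the identity $K^A_{n,m}(H\mid y)\approx K^A_m(\text{offset data})+\cdots$ using symmetry of information and the locality of hyperplanes inside $Q_m$. If that gets messy, I would define things via the pair (direction at precision $n-m$, intersection point with $Q_m$ at precision $n$), and use that these determine each other up to $O(\log n)$ bits.
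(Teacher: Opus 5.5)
This is essentially the paper's argument: the paper obtains the corollary simply ``by rescaling,'' and your zoom into $Q_m$ with the enlarged oracle $A'=(A,\langle y\rangle_m)$ is exactly that. You apply Theorem~\ref{thm:algorithmicIncidenceHyperplanes} at precision $N=n-m$, and both hypotheses transfer as you describe. One correction at the step you flagged: the two-sided identity $K^{A'}_N(\tilde H)\approx K^A_{n,m}(H\mid y)$ is false in general, because $H$ at precision $n$ carries its normal direction to precision $n$, while $\tilde H$ at precision $N$ carries it only to precision $n-m$. You only need $K^A_{n,m}(H\mid y)\gtrapprox K^{A'}_N(\tilde H)$, which holds because $H$ at precision $n$ together with $Q_m$ computes $\tilde H$ at precision $N$; on the right-hand side, by contrast, $K^{A'}_N(\tilde H\mid\tilde y)\approx K^A_{n-m,n}(H\mid y)$ is genuinely two-sided, as you say.
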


We will need the following simple corollary of this theorem, which will be easier to apply in our application to the distance set problem.
\begin{corollary}\label{cor:algorithmicIncidenceProjection}
    For every $s, t, \eps > 0$, with $d-2 < s \leq d-1$, $0 < t\leq d$ there is an $\eta > 0$ and $n_0\in \N$ such that the following holds for all $n,m$ with $n-m \geq n_0$ and for every oracle $A$. For every $y\in\R^d$ and $e \in \mathcal{S}^{d-1}$, if
    \begin{enumerate}
        \item[(i)] $K^A_{k, m}(y\mid y) > t(k-m) - \eta (n-m)$ for all $m\leq k \leq n$,
        \item[(ii)] $K^A_{k,n}(e \mid y) > sk - \eta (n-m)$ for all $k \leq n-m$, and
        \item[(iii)] $K^A_{n, n, n-m}(e\mid y,e) \approx 0$,
    \end{enumerate}
    then
    \begin{equation*}
        K^A_{n,n,n,m}(y\mid p_e y, e, y) \lessapprox K^A_{n,m}(y\mid y) - \alpha (n-m) + \eps (n-m),
    \end{equation*}
    where $\alpha = \min\{t, \frac{s+t -(d-2)}{2}, 1\}.$
\end{corollary}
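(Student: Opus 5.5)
We will deduce the statement from Corollary \ref{cor:algorithmicIncidenceHyperplanes2}, applied to the affine hyperplane
\[
H := \{z\in\R^d \mid p_e z = p_e y\},
\]
i.e. the hyperplane through $y$ with normal vector $e$. The point is that knowing $H$ (to the relevant precision) is essentially equivalent to knowing the pair $(e, p_e y)$. So the conditional complexity of $y$ given $(p_e y, e)$ can be rewritten through symmetry of information in terms of $K_{n,m}(H\mid y)$ and $K_{n-m,n}(H\mid y)$, and the incidence bound then transfers directly.

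\textbf{Step 1: verify the hypotheses.} The first step is to check the hypotheses of Corollary \ref{cor:algorithmicIncidenceHyperplanes2} for $H$. Condition (i) is identical. For (ii), $H$ at precision $k$ determines $e$ at precision $k$ up to $O(1)$ bits, since $e$ is its normal up to sign. Hence $K^A_{k,n}(H\mid y)\gtrapprox K^A_{k,n}(e\mid y) > sk-\eta(n-m)$, and the $\log$ losses are absorbed by slightly shrinking $\eta$ or enlarging $n_0$. The corollary then gives
\[
K^A_{n,m}(H\mid y) \gtrapprox K^A_{n-m,n}(H\mid y) + \alpha(n-m) - \eps(n-m).
\]

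\textbf{Step 2: rewrite the target quantity.} Next we express $K^A_{n,n,n,m}(y\mid p_e y, e, y)$. Given $e$ and $y$ to precision $n$, both $p_e y$ and $H$ are computable to precision about $n$. Conversely, $H$ to precision $n$ together with $y$ to precision $m$ recovers $e$ and $p_e y$ up to $O(\log n)$. By symmetry of information,
\[
K^A_{n,n,n,m}(y\mid p_e y,e,y) \approx K^A_{n,m}(y\mid y) - K^A_{n,n,m}(p_e y, e\mid y) \approx K^A_{n,m}(y\mid y) - K^A_{n,m}(H\mid y),
\]
using that $(p_e y, e)$ is computable from $(y,e)$. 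In this step we have to justify $K^A_{n,n,n,m}(y,e\mid p_e y, e, y)\approx K^A_{n,n,m}(y\mid\cdot)$ and $K_{n,n,m}(y,e\mid y)\approx K_{n,m}(y\mid y)+K_{n,n,m}(e\mid y,y)$. This is where hypothesis (iii) enters: it says $e$ is determined at precision $n$ by $e$ at precision $n-m$ together with $y$. This lets us trade $e$ at precision $n$ for $e$ at precision $n-m$ throughout, and keeps the bookkeeping between $K_{n,m}(H\mid y)$ and $K_{n-m,n}(H\mid y)$ consistent.

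\textbf{Step 3: combine.} Substituting the bound from Step 1 gives
\[
K^A_{n,n,n,m}(y\mid p_e y,e,y)\lessapprox K^A_{n,m}(y\mid y) - K^A_{n-m,n}(H\mid y) - \alpha(n-m) + \eps(n-m).
\]
The extra term $K^A_{n-m,n}(H\mid y)$ is nonnegative, so dropping it only weakens the inequality, and the claimed bound follows.

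\textbf{Main obstacle.} The delicate part is Step 2, namely making precise the equivalence between $H$ at precision $n$ given $y$ at precision $m$ and $(p_e y, e)$ at precision $n$. The trouble is that $p_e y$ depends on $e$ with a Lipschitz factor, so errors in $e$ propagate to $p_e y$. I would handle this with the standard lemma that computable Lipschitz maps change complexity by $O(\log n)$. Hypothesis (iii) is used to avoid a precision mismatch between the $n$-bit and $(n-m)$-bit descriptions of $e$.
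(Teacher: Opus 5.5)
Your overall plan (apply Corollary~\ref{cor:algorithmicIncidenceHyperplanes2} to the hyperplane $H$ through $y$ with normal $e$, then convert by symmetry of information) is the paper's, and Step~1 is fine. The gap is the displayed identity in Step~2:
\[
K^A_{n,n,n,m}(y\mid p_e y,e,y)\approx K^A_{n,m}(y\mid y)-K^A_{n,n,m}(p_e y,e\mid y).
\]
This would be symmetry of information only if $(p_e y,e)$ were computable from $y$ at precision $n$. It is computable from $(y,e)$, which is a different thing. The correct expansion is
\[
K^A_{n,n,m}(y\mid H,y)\approx K^A_{n,n,m}(y,H\mid y)-K^A_{n,m}(H\mid y)\approx K^A_{n,m}(y\mid y)+K^A_{n}(H\mid y)-K^A_{n,m}(H\mid y).
\]
The middle term $K^A_n(H\mid y)\approx K^A_n(e\mid y)$ is large: by (ii) with $k=n-m$ it is at least roughly $s(n-m)$. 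You even write $K_{n,n,m}(y,e\mid y)\approx K_{n,m}(y\mid y)+K_{n,n,m}(e\mid y,y)$, but the second summand then disappears from your formula.

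As a result, your Step~3 conclusion is false in general. Combined with (ii), it would give
\[
K^A_{n,n,n,m}(y\mid p_e y,e,y)\lessapprox K^A_{n,m}(y\mid y)-(s+\alpha)(n-m)+(\eps+\eta)(n-m).
\]
The right-hand side is negative whenever, say, $K^A_{n,m}(y\mid y)\approx t(n-m)$ with $t\le s$. That is absurd, since the left-hand side is a complexity.

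The missing term is exactly what hypothesis (iii) is for. By symmetry of information and (iii),
\[
K^A_n(e\mid y)\approx K^A_{n-m,n}(e\mid y)+K^A_{n,n,n-m}(e\mid y,e)\approx K^A_{n-m,n}(e\mid y)\approx K^A_{n-m,n}(H\mid y).
\]
So the $+K^A_n(H\mid y)$ above cancels exactly against the $K^A_{n-m,n}(H\mid y)$ supplied by Corollary~\ref{cor:algorithmicIncidenceHyperplanes2}. What remains is $K^A_{n,m}(y\mid y)-\alpha(n-m)+\eps(n-m)$, which is the claim.

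So in Step~3 you must not discard $K^A_{n-m,n}(H\mid y)$ as a harmless nonnegative term; it is needed to absorb $K^A_n(e\mid y)$. You describe (iii) as a bookkeeping device between precisions, and you locate the main difficulty in the Lipschitz dependence of $p_e y$ on $e$. Both miss that this cancellation is the actual content of the step.
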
 
\begin{proof}
    Let $H$ be the affine hyperplane containing $y$ whose normal vector is $e$. Since knowing $e$ and $p_e y$ is the same as knowing $H$, our aim is to estimate $K_{n,n,m}(y\mid H, y)$. It also follows from the same observation that $K^A_{k,n}(e\mid y)\approx K^A_{k,n}(H\mid y)$ for every $k\le n$.
    
    By the symmetry of information, Corollary \ref{cor:algorithmicIncidenceHyperplanes2}, our observation, symmetry of information, again symmetry of information, and (iii), we have the following inequalities:
    $$K^A_{n,m}(H\mid y)+K^A_{n,n,m}(y\mid H,y)\approx K^A_{n,n,m}(y,H\mid y).$$ 
        $$K^A_{n,m}(H\mid y) \gtrapprox K^A_{n-m,n}(H\mid y) + \alpha(n-m) - \eps (n-m)$$
 $$K^A_{n-m,n}(H\mid y)\approx K^A_{n-m,n}(e\mid y)$$
    $$K^A_{n,n,m}(y,H\mid y)\approx K_{n,m}(y\mid y)+ K^A_{n}(H\mid y)$$ 
    
$$ K_n^A(H\mid y)\approx K^A_{n,n-m,n}(H,e\mid y)\approx K^A_{n-m,n}(e\mid y)+K^A_{n,n,n-m}(H\mid y,e)$$ 
$$K^A_{n,n,n-m}(H\mid y,e)\approx 0.$$
Putting all these together we get the statement of the corollary.
\end{proof}

\subsection{Hyperplane Furstenberg sets}
We are now able to prove our main theorem on hyperplane Furstenberg sets. 
\begin{proof}[Proof of Theorem \ref{thm:FurstenbergHyperplanes}]
    Let $A$ be a Hausdorff oracle for $F$ and for $\H$. Let $\eps > 0$. It suffices to show that 
    \begin{equation}
        \dim^A_H(x) \geq \min\{s+t, \frac{3s+t - (d-2)}{2}, s+1\} - \eps,
    \end{equation}
    for some $x\in F$.
    
    Let $H\in\H$ be a hyperplane such that $\dim^A_H(H) > t - \eps / 2$. Let $x \in F \cap H$ such that $\dim^{A,H}_H(x) > s - \eps / 2$. Fix rational $t^\prime > t - \eps$ such that $\dim^A_H(H) > t^\prime$ and rational $s^\prime > s - \eps$ such that $\dim^{A,H}_H(x) > s^\prime$. Let $n\in\N$ be sufficiently large and $\eta > 0$ be sufficiently small. 

    We now consider the dual of $x$ and $H$, let $H_x$ be the dual hyperplane of $x$ and $y_H$ be the dual point of $H$. Since $n$ is taken to be sufficiently large, we see that
    \begin{enumerate}
        \item $K^A_a(y_H) > t^\prime a - \eta n$ for all $a\leq n$, and
        \item $K^A_{b,n}(H_x\mid y_H) > s^\prime b - \eta n$ for all $b \leq n$.
    \end{enumerate}
    We may therefore apply Theorem \ref{thm:algorithmicIncidenceHyperplanes} (choosing $m = 0$), which yields
    \begin{equation}
        K^A_n(H_x) \gtrapprox K^A_n(H_x \mid y_H) + \alpha n - \eps n /2,
    \end{equation}
    where $\alpha = \min\{t^\prime, \frac{s^\prime + t^\prime - (d-2)}{2}, 1\}$.

    Hence, we see that 
    \begin{align*}
        K^A_n(x) &\approx K^A_n(H_x)\\
        &\gtrapprox s^\prime n - \eta n + \alpha n - \eps n /2\\
        &= \min\{s^\prime + t^\prime, \frac{3s^\prime + t^\prime - (d-2)}{2}, s^\prime + 1\} n - \eta n - \eps n / 2.
    \end{align*}

    Therefore $$\dim^A_H(x) \geq \min\{s^\prime + t^\prime, \frac{3s^\prime + t^\prime - (d-2)}{2}, s^\prime + 1\} - \eta - \eps / 2$$and the conclusion follows.
\end{proof}

\section{Partitioning Lipschitz Functions}\label{sec:LipschitzFunction}
In this section, we will introduce a method of partitioning a piecewise linear Lipschitz function $f:[0,n]\to \R$ which will be used in the proof of Theorem \ref{thm:mainthm1}. 

The ultimate goal of this section is to partition the complexity function $K_n(x)$ of a point $x\in \R^d$ (as a function of $n$) into intervals of precisions where we have some control over how the complexity of $x$ is changing. Of course, the complexity of $x$ is only defined on the natural numbers (precisions), but we can interpolate to make it piecewise linear. We will interpolate between precisions which are sufficiently far apart to make the function also Lipschitz. For more details on the construction of our function from the complexity, see the next section. 

\begin{definition} Let $f\colon\R\to\R$ be a continuous function. We say that $f$, on an interval $[a,b]$, is
\begin{itemize}
\item \emph{Frostman}, if it attains its minimum on $[a,b]$ at $a$;
\item \emph{Katz-Tao}, if it attains its minimum on $[a,b]$ at $b$. 
\end{itemize}
Since the function $f$ is typically fixed, we commonly refer to intervals themselves as Frostman or Katz-Tao.
\end{definition}

First we recall the following well-known fact about Lipschitz functions. It is (a special case of) Jones's traveling salesman theorem \cite{jones1990rectifiable}.
If $f\colon [0,1]\to\R$ is a Lipschitz function with Lipschitz constant at most $L$, where $L\ge 1$, then 
$$\sum_I|\beta(I)|^2|I|\lesssim L^2,$$
where the summation is taken for all dyadic subintervals of $[0,1]$. The so-called 'Jones $\beta$-number' of an interval $I$ is defined as $$\beta(I):= \frac{1}{|I|}\min_A\max_{t\in I} |f(t)-A(t)|,$$ where the minimum is taken for all affine functions $A$. 
 
An immediate corollary of this is that for every $\eps,\beta>0$ and $L>1$ there is a $\delta>0$, such that for every Lipschitz function $f\colon[0,1]\to\R$ with Lipschitz constant at most $L$, there are dyadic subintervals $I_j$ such that $|\bigcup_j I_j|\ge 1-\eps$, and $|I_j|\ge\delta$ and $\beta(I_j)\le \beta$ for every $j$. In particular, the following corollary holds.

\begin{corollary}\label{traveling}For every $\eps,\beta>0$ and $L\ge 1$ there is a $c\in\N$, such that for every Lipschitz function defined on an interval $I$ with Lipschitz constant at most $L$, there are at most $c$ non-overlapping subintervals in $I$ whose $\beta$-number is at most $\beta$ and whose total length is at least $(1-\eps)|I|$.
\end{corollary}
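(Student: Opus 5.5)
We rescale to the unit interval and then extract the intervals from Jones' square-function estimate by a pigeonholing over dyadic scales; the output is an explicit $c=2^N$.

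\textbf{Reduction to $[0,1]$.} Let $I=[a,a+\ell]$ and $g(u):=f(a+\ell u)/\ell$ for $u\in[0,1]$. Then $g$ is Lipschitz with the same constant $L$. The $\beta$-number is invariant under this rescaling: affine functions correspond to affine functions, and both the sup-distance and the length get divided by $\ell$. So $\beta_g(J)=\beta_f(a+\ell J)$ for every $J\subseteq[0,1]$, and relative lengths are preserved. It therefore suffices to treat $I=[0,1]$.

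\textbf{Jones' estimate, level by level.} For $j\ge 0$ let $\mathcal{B}_j$ be the set of dyadic intervals $J\subseteq[0,1]$ with $|J|=2^{-j}$ and $\beta(J)>\beta$, and let $B_j=\bigcup\mathcal{B}_j$. Each $J\in\mathcal{B}_j$ has $\beta(J)^2|J|>\beta^2|J|$. Summing, Jones' theorem gives
\[
\sum_{j\ge0}|B_j| \;\le\; \beta^{-2}\sum_{J}\beta(J)^2|J| \;\le\; C\,L^2\beta^{-2}.
\]

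\textbf{Choice of $N$.} Choose $N$ with $(N+1)\,\eps > C L^2\beta^{-2}$, and put $c:=2^N$. Let $G$ be the set of points of $[0,1]$ that lie outside $B_j$ for at least one $j\in\{0,\dots,N\}$. The complement of $G$ is $\bigcap_{j\le N}B_j$. Since $\mathbf{1}_{\bigcap_{j\le N}B_j}\le \frac{1}{N+1}\sum_{j\le N}\mathbf{1}_{B_j}$, we get
\[
\Bigl|\bigcap_{j\le N}B_j\Bigr|\le \frac{1}{N+1}\sum_{j\le N}|B_j|\le \frac{CL^2\beta^{-2}}{N+1}<\eps .
\]
Hence $|G|\ge 1-\eps$.

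\textbf{Extracting the intervals.} Every $x\in G$ lies in some dyadic interval $J$ with $|J|\ge 2^{-N}$ and $\beta(J)\le\beta$. Let $\mathcal{J}$ be the family of maximal dyadic intervals, with respect to inclusion, among all dyadic intervals of length at least $2^{-N}$ whose $\beta$-number is at most $\beta$.
\begin{itemize}
\item Any two dyadic intervals are either nested or non-overlapping, so the members of $\mathcal{J}$ are pairwise non-overlapping.
\item Their union contains $G$, so their total length is at least $1-\eps$.
\item Each has length at least $2^{-N}$ and they are non-overlapping in $[0,1]$, so there are at most $2^N=c$ of them.
\end{itemize}
The constant $c$ depends only on $\eps$, $\beta$ and $L$, which proves the corollary.

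The only real input is Jones' bound, which is assumed. The one point needing care is checking that the rescaling preserves both the Lipschitz constant and the $\beta$-numbers, so that $c$ is independent of $|I|$.
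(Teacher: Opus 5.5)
Your proposal is correct and takes the same route as the paper. The paper also deduces the corollary directly from Jones' bound $\sum_I\beta(I)^2|I|\lesssim L^2$, noting that dyadic intervals of length at least some $\delta>0$ with $\beta$-number at most $\beta$ cover all but an $\eps$ portion of $[0,1]$. Your averaging over the scales $j\le N$ (giving $\delta=2^{-N}$), the choice of maximal dyadic intervals (giving at most $c=2^N$ of them), and the rescaling to $[0,1]$ make explicit the steps the paper leaves implicit.
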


We will say that an interval $I=[a,b]$ has the 'doubling' property, if $b\le 2a$, and it has the 'non-doubling' property if $b\ge 2a$ (it can have both). Our main result in this section is the following lemma.
\begin{lemma}\label{parti}For every $\eps,\eta>0$ and $L\ge 1$ there is a $c>0$ such that the following holds. Let $f\colon [a,b]\to \R$ be a piecewise linear Lipschitz function with Lipschitz constant at most $L$, where $1\le a\le b$, and suppose that $f$ is Katz-Tao on $[a,b]$. Then we can find at most $c(1+\log(b/a))$ many non-overlapping doubling subintervals in $[a,b]$ with the following properties. The total length of these intervals is at least $(1-\eps)(b-a)$, and they can be divided  into three sets of intervals, which we denote by $G$, $R$ and $B$, with the following properties.
\begin{itemize}
\item[(i)] The intervals in $G$ are Frostman and Katz-Tao.
\item[(ii)] The intervals in $R$ are Frostman and they are not Katz-Tao.
\item[(iii)] The total length of the intervals in $G$ is at least the total length of the intervals in $R$.
\item[(iv)] The intervals in $B$ are Katz-Tao and their $\beta$-number is at most $\eta$.
\end{itemize}
\end{lemma}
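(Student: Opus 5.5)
We first reduce to doubling intervals. We split $[a,b]$ into the dyadic blocks $[2^ja,2^{j+1}a]\cap[a,b]$; there are at most $1+\log(b/a)$ of them, and each is doubling. It is then enough to handle each block with $O(c)$ intervals. The difficulty is that $f$ need not be Katz-Tao on a block. Only the global interval is assumed Katz-Tao, and properties (i)--(iv) are local. So we plan to work block by block but keep track of a global ``running minimum'' structure.

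Fix a doubling interval $J$. By Corollary \ref{traveling}, applied with parameters $\eps$ and $\eta$ (after rescaling $J$ to unit length; $\beta$ is scale invariant and the Lipschitz constant is unchanged), we cover all but an $\eps$-fraction of $J$ by at most $c_0=c_0(\eps,\eta,L)$ non-overlapping subintervals $I$ with $\beta(I)\le\eta$. On such an $I$, $f$ is within $\eta|I|$ of an affine function with slope $\sigma$.
\begin{itemize}
\item If $\sigma\ge 2\eta$, we shorten $I$ by a $O(\eta/\sigma)$-fraction, or rather by an $\eps$-fraction once $\eta\ll\eps$, at its left end. The result is Frostman and not Katz-Tao, so it goes into $R$.
\item If $\sigma\le -2\eta$, we shorten symmetrically. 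The result is Katz-Tao with small $\beta$, so it goes into $B$.
\item If $|\sigma|<2\eta$ (nearly flat), the interval is either trimmed into $B$, or its Frostman-and-Katz-Tao part goes into $G$.
\end{itemize}
We choose $\eta$ much smaller than $\eps$ (shrinking it is allowed, since (iv) only asks for $\beta\le\eta$). Then the trimming loses a negligible proportion of the length.

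This already gives (i), (ii) and (iv). The main obstacle is (iii): the total length of $R$ must not exceed that of $G$. Increasing pieces cost length in $R$, and something must pay for them. Here we use the global Katz-Tao hypothesis, $f(b)=\min f$. Every increase of $f$ must be compensated later by a decrease, so the total length of decreasing (B-type) pieces is at least $\lesssim$ the total rise divided by $L$. That alone does not control $R$ by $G$, so we reorganize instead of classifying locally.

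Let $g(x)=\min_{[x,b]}f$ be the running minimum from the right. Then $g\le f$, $g$ is non-decreasing, and $g(b)=f(b)$. The set where $f>g$ splits into maximal intervals $(u,v)$ with $f(u)=f(v)=g$ on their endpoints and $f>f(v)$ inside. Each such excursion interval $[u,v]$ is Frostman and Katz-Tao (minimum attained at both ends), so it belongs in $G$. On the contact set $\{f=g\}$, $f$ is non-decreasing, so subintervals there are Frostman. They are Katz-Tao only where $f$ is constant, and otherwise they go into $R$.

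The plan is to take a rising part of the contact set, which would be an $R$-interval, together with an adjacent excursion. The merged interval is still Frostman, and it is Katz-Tao exactly when the excursion drops back to the level at the start. Where it does not drop back, we have to pay in $B$ through the $\beta$-decomposition. So we would first handle excursions with the $G$ construction, and then assign leftover rising contact segments of length $\ell$ to $R$. Each of these is paired with an excursion of length $\gtrsim\ell$ drawn from $G$: inside an excursion starting at $u$, the function $f$ has to climb above $f(u)$ and then come back down. If the pairing fails, we relabel the offending rising pieces together with their excursions as a single $G$-interval, which keeps (iii) in force.

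Counting, the $G$/$R$ bookkeeping must be done with $O(c)$ intervals per dyadic block. We would enforce this by merging all excursions shorter than $\eps|J|/c_0$ into neighbours, after checking that merging preserves the Frostman/Katz-Tao endpoint conditions. This combinatorial merging step is where we expect most of the work.
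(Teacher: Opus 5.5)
There is a genuine gap: you never prove (iii), and the tool you introduce for it is degenerate. Since $f$ is Katz-Tao on $[a,b]$, we have $f(b)=\min_{[a,b]}f$, so $g(x)=\min_{[x,b]}f\equiv f(b)$ for every $x$. The contact set is then just the set of global minimisers, there are no ``rising contact segments'', and the excursion picture carries no information.

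Switching to the running minimum from the left does not rescue the argument. The excursions are then indeed Frostman and Katz-Tao, but they are typically not doubling. If you cut a long excursion into doubling pieces, the pieces covering its rising part are Frostman but not Katz-Tao, so they land in $R$. That is exactly where $R$-length is created, and nothing in your plan pays for it.

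Your proposed pairing does not fix this either. A long rise can be followed by a short excursion, so pairing a rising piece of length $\ell$ with an excursion of length $\gtrsim\ell$ is unfounded. Even if it held, it would only give $|G|\gtrsim|R|$, whereas (iii) is needed with constant $1$: it is combined with the factor $1-d/4\ge 0$ in the proof of Theorem \ref{thm:maintheoremEff}.

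Working dyadic block by dyadic block cannot help. On a block where $f$ is strictly increasing there is no $G$-interval at all, so the compensating $G$-length must come from a later block. A minor point as well: with $\beta(I)\le\eta$ and slope only $\sigma\ge 2\eta$, the minimum of $f$ on $I$ can lie anywhere in $I$. You need $\sigma\gtrsim\eta/\eps$ to trim only an $\eps$-fraction.

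The missing idea is the paper's maximality argument.
\begin{itemize}
\item Select greedily, longest first, a maximal family of non-overlapping doubling intervals on which $f$ is both Frostman and Katz-Tao.
\item Since $a\ge 1$, every interval of length at most $1$ is doubling. Hence a gap between selected intervals contains no local maximum and no flat piece, so each gap is strictly decreasing and then strictly increasing.
\item Because $f$ is Katz-Tao on $[a,b]$, each maximal stretch of the form increasing, good, \dots, increasing is followed by a good block $[p,q]$, and that block is followed by a decreasing piece.
\item So $[p,q]$ can be enlarged slightly on both sides to an interval on which $f$ is Frostman and Katz-Tao. This enlargement was available when the first interval of the block was chosen and is longer than it. By the longest-first rule it therefore cannot be doubling, which forces $q\ge 2p$.
\item The preceding increasing stretch lies in $[a,p]$, so its length is less than $p\le q-p$. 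This gives (iii) with constant exactly $1$. It also bounds the number of such stretches by $O(1+\log(b/a))$, since non-overlapping non-doubling subintervals of $[a,b]$ are that few.
\end{itemize}
Only the remaining decreasing stretches need Corollary \ref{traveling}, which produces the $B$-intervals.
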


\begin{proof}
First, using a greedy algorithm that always chooses the longest available interval, we choose a maximal set of non-overlapping doubling subintervals in $[a,b]$ with the property that $f$ is both Frostman and Katz-Tao on each of these subintervals. We call these intervals \emph{good} subintervals. Together with the remaining gaps, they form a partition of $[a,b]$. Within this partition, when we have a sequence of consecutive good subintervals, we call it a \emph{good block}. We have partitioned $[a,b]$ into good blocks and the gaps between the good blocks. 

Now consider a gap $[c,d]$. Since the function is piecewise linear, and it doesn't contain any good subinterval, therefore it cannot have a piece on which it is increasing followed by a piece on which it is decreasing. In other words, denoting by $e$ the point at which it attains its minimum, it has to be strictly decreasing on $[c,e]$ and strictly increasing on $[e,d]$. We divide $[c,d]$ into two subintervals at the point $e$ (if $e$ is one of the endpoints, then we don't divide it).
We now have a partition of $[a,b]$ into decreasing intervals, increasing intervals, and good blocks.  

Importantly, every increasing interval in this partition is followed by a good block (note that here we use that the last interval in the partition cannot be increasing since the function was Katz-Tao on the interval $[a,b]$). We call a maximal set of consecutive intervals in this partition that are increasing, good block, increasing, good block, ..., increasing (it needs both to start and to end with increasing) an \emph{increasing block}. The function may not be increasing on an increasing block, but it will be important for us that there is a small neighborhood of its right endpoint on which it is increasing. 

Also note that, by our construction, every increasing block is followed by a good block, and this good block is followed by a decreasing interval. (Again, this good block must be followed by something, otherwise $[a,b]$ wouldn't have been Katz-Tao, and it cannot be followed by an increasing interval.) Since, outside this good block, at its left and right endpoint, there are some intervals on which the function is increasing and decreasing, respectively, therefore we can cover our good block by a slightly longer interval which has the Frostman and the Katz-Tao property. Since our good intervals were maximal, therefore this slightly larger covering cannot be doubling. Therefore our good block has the non-doubling property.

Now consider our partitioning of $[a,b]$ into increasing blocks, and non-doubling good blocks (the good blocks immediately following the increasing blocks), and the remaining gaps, which we will call 'decreasing blocks'. Since every increasing block is followed by a non-doubling good block, and we can have $\le 1+\log(b/a)$ many non-overlapping non-doubling intervals, therefore this is a partition of $[a,b]$ into $O(1+\log(b/a))$ many blocks.

Next, by subdividing each increasing interval (if necessary), and then joining consecutive intervals (if necessary),  we repartition each increasing block into doubling intervals. We can ensure that the total number of these doubling intervals is $\lesssim 1+\log (b/a)$. Note that each resulting interval is Frostman. These intervals will be (some of) our intervals $I$ in the statement of the lemma. Depending on whether an interval is also Katz-Tao, we assign it either to the set $G$ or to $R$. Similarly, by joining consecutive intervals if necessary, we repartition each non-doubling good block, into a total number of  $\lesssim 1+\log (b/a)$ many doubling subintervals that are both Frostman and Katz-Tao, and we assign these intervals to $G$.

Now consider a decreasing block. We first decompose the block into doubling intervals in the natural way. By applying Corollary \ref{traveling} with $\beta:=\eps\eta$ to each interval, we can choose $c$ non-overlapping subintervals $I$ whose union covers at least a $(1-\eps)$ portion of the block, and the $\beta$-number of each of these intervals is at most $\eps\eta$. 

Recall that our decreasing block had been before subdivided into decreasing and good intervals. Fully covering some of these good intervals by an interval $I$ is fine, but we don't want our $I$ to contain only part of a good interval. So let $I'\subset I$ denote the largest subinterval of $I$ whose endpoints are not inside the interiors of the good intervals. We examine two cases. If $|I'|< \eps |I|$, then we throw away the interval $I$. And if $|I'|\ge \eps|I|$, then we keep $I'$ (and discard the rest of the interval $I$). Note that $I'$ is Katz-Tao, and $\beta(I')\le\beta(I)/\eps\le \beta/\eps=\eta$. We assign these $I'$ to our set $B$, and, in addition, we assign to $G$ each good interval whose interior contained some endpoints of some of our intervals $I$. This ensures that we threw away at most an $\eps$ portion of our block.

It only remains to check (iii). This follows from the fact that each interval in $R$ is contained in an increasing block, and each increasing block is followed by a non-doubling good block whose intervals are in $G$. And from the non-doubling property it follows that the length of the increasing block is at most the length of this non-doubling good block.
\end{proof}

We now rewrite our lemma into a form that will be convenient to apply to our complexity functions $K_n(x)$.  

\begin{definition}
We say that a function $f$ on an interval $I=[a,b]$ is 
\begin{itemize}
\item\emph{$\sigma$-Frostman} or \emph{$\sigma$-Katz-Tao}, if $f(t)-\sigma t$ is Frostman or Katz-Tao, respectively;
\item \emph{$(\sigma,\eta)$-linear}, if $|f(t)-f(a)-\sigma (t-a)|\le\eta (b-a)$.
\end{itemize}
\end{definition}

The following proposition is an immediate corollary of Lemma \ref{parti}.
\begin{proposition}\label{prop:existenceNicePartition}For every $\eps,\eta>0$ and $L\ge 1$ there is an $n_0$ such that the following holds for every $n\ge n_0$. Let $f\colon [0,n]\to \R$ be a monotone increasing piecewise linear Lipschitz function with Lipschitz constant at most $L$, and suppose that $f$ is $\sigma$-Katz-Tao on $[0,n]$. Then we can find $\lesssim \log n$ many non-overlapping doubling subintervals in $[0,n]$ such that each of them has length  $\ge C\eta^{-1}\log n$, the total length of these intervals is at least $(1-O(\eps))n$, and they can be divided  into three sets of intervals, which we denote by $G$, $R$ and $B$, with the following properties.
\begin{itemize}
\item[(i)] The intervals in $G$ are $\sigma$-Frostman and $\sigma$-Katz-Tao.
\item[(ii)] The intervals in $R$ are $\sigma$-Frostman and they are not $\sigma$-Katz-Tao.
\item[(iii)] The total length of the intervals in $G$ is at least the total length of the intervals in $R$, up to a $O(\eps n)$ error. That is, $$\text{length}(G) \geq \text{length}(R) - O(\eps n).$$
\item[(iv)] Each interval in $B$ is $(\sigma',\eta)$-linear with some $0\le \sigma'\le \sigma$ and also $\sigma$-Katz-Tao.
\end{itemize}
\end{proposition}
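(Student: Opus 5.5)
The plan is to apply Lemma \ref{parti} to the auxiliary function $g(t):=f(t)-\sigma t$, after first cutting off a short initial segment so that the lower endpoint is at least $1$ and the doubling intervals are long. Being $\sigma$-Frostman or $\sigma$-Katz-Tao for $f$ is, by definition, being Frostman or Katz-Tao for $g$. The function $g$ is piecewise linear and Lipschitz with constant at most $L+\sigma\le 2L$; we may assume $\sigma\le L$, since otherwise the $\sigma$-Katz-Tao hypothesis on $[0,n]$ together with the Lipschitz bound forces degenerate behaviour that can be treated separately or excluded.

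\textbf{Step 1: restricting to $[a,n]$.} Put $a:=\eps n$. The hypothesis only says that $g$ is Katz-Tao on $[0,n]$, so $g$ need not be Katz-Tao on $[a,n]$. To repair this, let $a'\in[a,n]$ be a point where $g$ attains its minimum on $[a,n]$. Then $g$ is Katz-Tao on $[a,a']$, and on $[a',n]$ it is Frostman. Since $g(n)\le g(a')$ by the Katz-Tao hypothesis on $[0,n]$, the interval $[a',n]$ is also Katz-Tao, so it can be placed directly in $G$ after being subdivided into $O(\log(1/\eps))$ doubling pieces that remain Frostman and Katz-Tao. One way to obtain these pieces is to run the greedy construction from the proof of Lemma \ref{parti}. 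Alternatively, we can simply apply Lemma \ref{parti} on the whole interval $[a,n]$, where $g$ is Katz-Tao: the minimum over $[a,n]$ is at least the minimum over $[0,n]$, which is $g(n)$. So no repair is actually needed, and we apply Lemma \ref{parti} to $g$ on $[\eps n,n]$ with parameters $\eps$, $\eta$ and $2L$. This produces at most $c(1+\log(1/\eps))=O(1)$ doubling intervals covering a $(1-\eps)$ portion of $[\eps n,n]$. Their total length is therefore at least $(1-O(\eps))n$, and the count $O(1)$ is certainly $\lesssim\log n$.

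\textbf{Step 2: translating the properties.} Items (i) and (ii) carry over verbatim. Item (iii) of the lemma gives $\text{length}(G)\ge\text{length}(R)$ exactly, which is stronger than what we need. The $O(\eps n)$ error will instead be absorbed by Step 3. For an interval $[u,v]\in B$, the bound $\beta(I)\le\eta$ gives an affine function $A(t)=g(u)+\lambda(t-u)$ with $|g(t)-g(u)-\lambda(t-u)|\le 2\eta(v-u)$; the factor $2$ comes from moving the affine function so that it passes through $g(u)$. Setting $\sigma'=\sigma+\lambda$, this says $f$ is $(\sigma',2\eta)$-linear on $[u,v]$. We then run the construction with $\eta/2$ in place of $\eta$.

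\textbf{Step 3: the constraint $0\le\sigma'\le\sigma$.} We have $\sigma'\ge -O(\eta)$ because $f$ is monotone. We also have $\sigma'\le\sigma+O(\eta)$ because $g$ is Katz-Tao on the interval: $g(v)\le g(u)$ gives $\lambda(v-u)\le 2\eta(v-u)$. Clipping $\sigma'$ to $[0,\sigma]$ therefore changes the linearity constant by only $O(\eta)$, so we again rescale $\eta$.

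\textbf{Step 4: length.} Every interval produced lies in $[\eps n,n]$ and has length $\gtrsim \eps n/c$, which is $\ge C\eta^{-1}\log n$ once $n\ge n_0(\eps,\eta,L)$. Intervals with $\beta\le\eta$ are only guaranteed up to the resolution in Corollary \ref{traveling}, but that corollary produces a bounded number of intervals, so each of them is a constant fraction of $n$.

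I expect the main obstacle to be the bookkeeping in Step 3, namely making sure the slope $\sigma'$ really lands in $[0,\sigma]$ using only the Katz-Tao property and monotonicity. The reduction in Step 1 also needs care: the lower endpoint must be at least $1$, and the doubling condition would be ill-behaved near $0$, which is why the initial segment of length $\eps n$ is discarded and counted in the $O(\eps n)$ loss.
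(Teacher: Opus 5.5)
Your reduction matches the paper's: apply Lemma \ref{parti} to $g(t)=f(t)-\sigma t$ and translate (i)--(iv). Starting at $\eps n$ instead of $1$ is harmless, and it even gives $O(1)$ intervals. Your Step 3 correctly supplies the slope clipping $0\le\sigma'\le\sigma$, which the paper leaves implicit.

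The gap is in Step 4. Lemma \ref{parti} says nothing about the lengths of individual intervals. Having ``boundedly many intervals covering most of $[\eps n,n]$'' does not imply that each one has length $\gtrsim \eps n/c$, because most of the length can sit in a few long intervals while the others are tiny. The construction in the proof of Lemma \ref{parti} really does produce such intervals. A flat piece of $g$ of length $O(1)$ can be picked by the greedy step as a good interval and later assigned to $G$. Likewise, a short increasing interval just before a non-doubling good block becomes a short interval in $R$. So your argument does not establish that every interval has length at least $C\eta^{-1}\log n$.

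The missing step is the paper's entire one-line proof: discard every interval of length less than $C\eta^{-1}\log n$.
\begin{itemize}
\item[(a)] In your setup there are at most $c(1+\log(1/\eps))$ intervals, so the discarded length is $O(\log n)$. On $[1,n]$, as in the paper, there are $c(1+\log n)$ intervals and the discarded length is $O(\log^2 n)$.
\item[(b)] Either way, the discarded length is at most $\eps n$ once $n\ge n_0$, so the total length drops by only $O(\eps n)$.
\item[(c)] Discarding intervals from $R$ is harmless. Discarding short intervals from $G$ is exactly why (iii) holds only up to an $O(\eps n)$ error.
\end{itemize}
Your remark that this error is ``absorbed by Step 3'' is therefore misplaced, since Step 3 only deals with slopes.

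A minor point: you do not need to assume $\sigma\le L$. Lemma \ref{parti} can simply be applied with Lipschitz constant $L+\sigma$.
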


\begin{proof}We choose $n$ large enough so then the total length of at most $c(1+\log n)$ many intervals of length $\le C \eta^{-1}\log n$ is at most $O(\eps n)$. Then all assertions are satisfied.
\end{proof}

\begin{remark} To take away some of the mystery (or, to make this sound even more mysterious), we note that the 'doubling property' of our intervals will be needed because of the curvature of the circle. This will become apparent in the geometric argument in the proof of Proposition \ref{prop:conditionalBoundsFrostmanIntervals}.
\end{remark}

We now introduce the complexity analogs of non-concentration conditions defined above. 
\begin{definition} Let $\sigma, \eta > 0$. For any integers $m \leq n$, and any point $x$, we say that $[m,n]$ is 
\begin{itemize}
\item \emph{$(\sigma, \eta)$-Frostman} if $K^A_k(x) - K^A_m(x) \gtrapprox \sigma(k-m) - \eta(n-m)$ for every $k\in[m,n]$;
\item \emph{$(\sigma, \eta)$-Katz-Tao} if $K^A_n(x) - K^A_k(x) \lessapprox \sigma(n-k) + \eta(n-m)$ for every $k\in[m,n]$;
\end{itemize}
\end{definition}

\begin{remark}\label{remark:complexityfunction}
    One of the important facts we will use is that in Proposition \ref{prop:existenceNicePartition}, the selected intervals have length at least $C\eta^{-1}\log n$. Therefore, if we approximate a complexity function by a piecewise linear function $f$ as described in Section \ref{ssec:prelimAlgMethods1}, if $f$ is $\sigma$-Frostman (respectively, $\sigma$-Katz-Tao) on $[a,b]$, then the complexity function is $(\sigma, \eta)$-Frostman (respectively, $(\sigma, \eta)$-Katz-Tao) on $[a,b]$.
\end{remark}

\begin{remark}\label{remark:complexityfunction2}
We also note the trivial fact that in our piecewise linear approximation, without loss of generality we can assume that $f$ is not only Lipschitz with Lipschitz constant at most $d+1$, but also 
    \begin{equation}\label{flip}(f(b)-f(a))/(b-a)\le d+c(\eta)\end{equation} 
    for every $a,b$ with $b-a\gtrsim C\eta^{-1}\log n$, where $c(\eta)$ is a constant depending on $\eta$ and it is small if $\eta$ is small.
\end{remark}

\section{Algorithmic dimension of distances}\label{sec:algorithmicDimDistances}
The goal of this section is to prove Theorem \ref{thm:maintheoremEff}. Throughout this section, we fix an $x$, $y$, and $e = \frac{y-x}{|x-y|}$ for which the following assumptions hold:
\begin{itemize}
\item[(i)] $\dim_H^A(x), \dim^A_H(y), \dim^{A,x}_H(e) >\frac{d}{2}$ and $\dim_H^A(x), \dim^A_H(y), \dim^{A,x}_H(e)>s_0>d-2$
\item[(ii)] $K^{A, x}_n(y) \gtrapprox K^{A}_n(y)$ for every $n\in\N$.
\end{itemize}
Without loss of generality we also assume that $|x-y| \sim 1$.

Our first lemma shows that we necessarily have a strong lower bound on the complexity of $e$ given $y$.
\begin{lemma}\label{lem:C1C2C3}
    There is an $s > d/2$ such that for all $n$ and all $m\leq n$,
\begin{equation}
    K^A_{m, n}(e \mid y) \gtrapprox sm.
\end{equation}
\end{lemma}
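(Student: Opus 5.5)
The plan is to deduce this bound from the hypothesis $\dim^{A,x}_H(e) > d/2$ together with the independence assumption (ii), by symmetry of information. We fix $s$ with $d/2 < s < \min\{\dim^{A,x}_H(e), \dim^A_H(x)\}$. By definition of effective dimension, for all sufficiently large $m$ we then have $K^{A,x}_m(e) \ge sm$, and so $K^{A,x}_m(e)\gtrapprox sm$ for every $m$ (small $m$ are absorbed in the error). The task is to transfer this from the oracle $x$ to conditioning on $y$ at precision $n$.

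The key observation is that, given $x$ and $y$ to precision $n$, the direction $e=(y-x)/|y-x|$ is computable to precision $\approx n$, since $|x-y|\sim 1$. Since $m\le n$, we have
\[
K^{A,x}_m(e) \lessapprox K^{A,x}_{m,n}(e\mid y) + K^{A,x}_{m}(\text{anything needed}),
\]
which is not directly useful. So we argue instead by comparing $K^A_{m,n}(e\mid y)$ with $K^{A,x}_{m,n}(e\mid y)$. Conditioning on more information can only lower complexity, so
\[
K^A_{m,n}(e\mid y)\gtrapprox K^{A,x}_{m,n}(e\mid y).
\]
By symmetry of information relative to $(A,x)$,
\[
K^{A,x}_{m,n}(e\mid y)\approx K^{A,x}_m(e)+K^{A,x}_{n,m}(y\mid e)-K^{A,x}_n(y).
\]
It therefore suffices to show that $K^{A,x}_{n,m}(y\mid e)\gtrapprox K^{A,x}_n(y)-$(small), i.e. that $e$ at precision $m$ carries essentially no information about $y$ relative to $x$.

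This last step is the main obstacle, and I would handle it with the radial structure. Given $x$ and $e$ to precision $m$, the point $y$ lies in a $2^{-m}$-tube around the ray from $x$. Hence $K^{A,x}_m(y\mid e)$ is controlled by one real parameter (the distance) to precision $m$, so $K^{A,x}_{m,m}(y\mid e)\lessapprox m$. This gives the wrong direction for what we need, so instead I would bound
\[
K^{A,x}_{n,m}(y\mid e) \ge K^{A,x}_n(y) - K^{A,x}_m(e)
\]
trivially. That bound only yields $K^{A,x}_{m,n}(e\mid y)\gtrapprox 0$, which is too weak.

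The better route avoids the detour. Rewriting via symmetry,
\[
K^{A,x}_{m,n}(e\mid y)=K^{A,x}_{n,m}(y, e)-K^{A,x}_n(y).
\]
Since $e$ at precision $m$ is computable from $y$ at precision $m\le n$ given $x$, we get $K^{A,x}_{m,n}(e\mid y)\approx 0$ relative to $x$. So the lower bound must come from $K^A$ without $x$: the content is that $y$ alone does not determine $e$. Here I would use $K^A_{m,n}(e\mid y)\gtrapprox K^A_{m,n,n}(e\mid y)$ and Lemma~\ref{lem:algProjection}-type reasoning. Knowing $e$ to precision $m$ and $y$ to precision $n$ determines $x$ up to the line through $y$ in direction $e$, i.e. up to $\pi_{e^\perp}$ data. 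Thus
\[
K^A_{m}(x\mid y)\lessapprox K^A_{m,n}(e\mid y)+K^A_m(x\mid y,e),
\]
and $K^A_m(x\mid y,e)\lessapprox m$, since only the position along the line is needed. By assumption (ii) and symmetry of information, $K^A_m(x\mid y)\gtrapprox K^A_m(x)\gtrapprox \dim^A_H(x)\, m$. This gives $K^A_{m,n}(e\mid y)\gtrapprox (\dim^A_H(x)-1)m$, which exceeds $d/2$ only when $\dim^A_H(x)>d/2+1$.

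To obtain $s>d/2$ in general, I would sharpen the bound on $K^A_m(x\mid y,e)$. The point $x$ lies on the line through $y$ with direction $e$, and $e$ is random relative to $x$ since $\dim^{A,x}_H(e)>d/2$. Lemma~\ref{lem:algProjection}, applied with the roles of $x,y$ swapped and the projection along $e$, should let us recover $x$ from $y,e$ while paying only $K^A_m(x)-sm$. Substituting this gives $K^A_{m,n}(e\mid y)\gtrapprox sm$. Verifying the hypotheses of Lemma~\ref{lem:algProjection} (namely $\dim^{A,x}_H(e)>s$ and $\dim^A_H(x)>s$, which hold by (i)) and handling the mixed precisions $m\le n$ is the delicate part.
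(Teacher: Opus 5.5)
After the abandoned detours, your final argument is exactly the paper's proof: (ii) plus symmetry of information gives $K^A_m(x)\lessapprox K^A_{m,n}(x\mid y)\lessapprox K^A_{m,n}(e\mid y)+K^A_{m,m,n}(x\mid e,y)$, and the last term is bounded by $K^A_m(x\mid \pi_{e^\perp}x,e)\lessapprox K^A_m(x)-sm+\eps m$ via Lemma~\ref{lem:algProjection}. No swapping of $x$ and $y$ is needed. Since $\pi_{e^\perp}x=\pi_{e^\perp}y$ is computable from $y$ at precision $n\ge m$ and $e$ at precision $m$, the lemma applies directly to the pair $(x,e)$ under exactly the hypotheses you list, and this also settles the mixed-precision bookkeeping.
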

\begin{proof}
    Let $n$ be sufficiently large, and $m \leq n$. Using symmetry of information, and assumption (ii), and the obvious fact $\pi_{e^\perp} x = \pi_{e^\perp} y$, we have
    \begin{align*}
        K^A_m(x) &\approx K^{A}_{m,n}(x\mid y)
        \lessapprox K^A_{m,n}(e \mid y) + K^A_{m,m,n}(x \mid e,y)\\
        &\lessapprox K^A_{m,n}(e \mid y) + K^A_{m}(x \mid \pi_{e^\perp} x, e).
    \end{align*}

    By Lemma \ref{lem:algProjection} and assumption (i), there is an $s^\prime > d/2$ such that
    \begin{equation}
        K^A_m(x \mid \pi_{e^\perp} x, e) \lessapprox K^A_m(x) - s^\prime m + O(\eps m).
    \end{equation}
    Combining this with the previous inequality proves the lemma.
\end{proof}

\begin{proposition}\label{prop:conditionalBoundsFrostmanIntervals}
For every $\sigma\in[0,d]$ and $\eps > 0$ there is an $\eta>0$ such that the following holds for every doubling interval $[m,n]$ with $n-m\ge C \eta^{-1} \log n$. If $[m,n]$ is $(\sigma, \eta)$-Frostman, then
\begin{equation*}
    K^{A,x}_{n,n,m}(y\mid |x-y|, y) \lessapprox K^A_{n,m}(y\mid y) - \alpha (n-m) + \eps (n-m),
\end{equation*}
where $\alpha = \min\{\sigma, \frac{\frac{d}{2}+\sigma -(d-2)}{2}, 1\}.$
\end{proposition}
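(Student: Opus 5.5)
We reduce to Corollary \ref{cor:algorithmicIncidenceProjection}, applied relative to the oracle $(A,x)$ with $t=\sigma$ and with $s$ the exponent of Lemma \ref{lem:C1C2C3}, slightly decreased so that $s\in(d-2,d-1]$ and $s\le d/2$ works in the formula. The idea is the usual one for pinned distances. Knowing $x$, $|x-y|$ and $y$ at precision $m$ determines, at precision $n$, a piece of the sphere of radius $|x-y|$ around $x$ inside the cube of $y$ at scale $2^{-m}$. On that piece the sphere looks like a hyperplane with normal $e$, up to a curvature error.

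\textbf{Step 1: replace the sphere by a hyperplane.} The sphere piece inside a $2^{-m}$-cube deviates from its tangent hyperplane by about $2^{-2m}$. Since $[m,n]$ is doubling, $n\le 2m$, so $2^{-2m}\le 2^{-n}$. Hence at precision $n$ the information $(|x-y|, x)$, together with $y$ at precision $m$, is equivalent to $(p_e y, e)$, where $e$ is known to precision roughly $n-m$ from $y$ at precision $m$ and from $x$. This is where the doubling hypothesis enters, as flagged in the earlier remark. Concretely, I would show
\[
K^{A,x}_{n,n,m}(y\mid |x-y|,y)\lessapprox K^{A,x}_{n,n,n,m}(y\mid p_e y, e, y),
\]
with $e$ computable from $x$ and $y$. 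This also gives hypothesis (iii) of Corollary \ref{cor:algorithmicIncidenceProjection}, namely $K^{A,x}_{n,n,n-m}(e\mid y,e)\approx 0$. The reason is that given $x$, the direction at precision $n-m$ together with $y$ at precision $n$ pins down $e$ at precision $n$, since $|x-y|\sim1$.

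\textbf{Step 2: verify the hypotheses.} For hypothesis (i) we need
\[
K^{A,x}_{k,m}(y\mid y)\gtrapprox \sigma(k-m)-\eta(n-m).
\]
This follows from the $(\sigma,\eta)$-Frostman property of $[m,n]$ (stated for $K^A$), from symmetry of information, and from assumption (ii), $K^{A,x}_k(y)\approx K^A_k(y)$. For hypothesis (ii) we need
\[
K^{A,x}_{k,n}(e\mid y)\gtrapprox sk-\eta(n-m)\quad\text{for } k\le n-m.
\]
Lemma \ref{lem:C1C2C3} only gives this without the oracle $x$, and with $x$ present $e$ is trivial given $y$. So the corollary must be applied in the rescaled frame relative to $A$ alone, not $(A,x)$. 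In that frame, the knowledge of $x$ enters only through the precision-$(n-m)$ value of $e$, which we charge as the conditioning on $e$ in the conclusion.

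\textbf{Main obstacle.} I expect Step 2 to be the main obstacle: arranging the conditioning so that $x$ is used only via $e$ and $|x-y|$, while the lower bound on $e\mid y$ from Lemma \ref{lem:C1C2C3} remains valid. My first attempt is the following. Write
\[
K^{A,x}_{n,n,m}(y\mid |x-y|,y)\lessapprox K^{A}_{n,n,n,m}(y\mid p_e y,e,y),
\]
dropping the oracle $x$ since $(e,p_e y)$ already encodes everything used. Then apply Corollary \ref{cor:algorithmicIncidenceProjection} relative to $A$, with hypothesis (iii) justified by computing $e$ at precision $n$ from its precision-$(n-m)$ part and $y$. If (iii) fails in that form, I would instead include $e$ at precision $n$ in the conditioning and absorb the error using $K^A_{n,n-m}(e\mid e)$ together with the lower bound on $e$. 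The incidence exponent then matches $\alpha$ with $s=d/2$.
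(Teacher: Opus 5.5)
There is a genuine gap, and it sits exactly where you flagged the main obstacle. You work with the true direction $e=(y-x)/|y-x|$ at precision $n$, and this leaves you stuck.

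Relative to $(A,x)$, hypothesis (ii) of Corollary~\ref{cor:algorithmicIncidenceProjection} fails, as you noticed. Relative to $A$ alone, two other steps fail.

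\emph{The reduction.} The inequality
\[
K^{A,x}_{n,n,m}(y\mid |x-y|,y)\lessapprox K^{A}_{n,n,n,m}(y\mid p_e y,e,y)
\]
needs $e$ and $p_e y$ at precision $n$ to be computable from $x$, from $|x-y|$ at precision $n$, and from $y$ at precision $m$. They are not. From $x$ and $y$ at precision $m$ you get $e$ only to precision $m$, so $p_e y=|x-y|+e\cdot x$ is in general also known only to precision about $m$. Dropping the oracle $x$ is harmless, since it can only increase complexity; the problem is what sits in the conditioning. So your Step 1 claim that this information is ``equivalent'' to $(p_e y,e)$ is false.

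\emph{Hypothesis (iii).} Relative to $A$ alone you need $K^A_{n,n,n-m}(e\mid y,e)\approx 0$, and this fails for the true $e$. Without $x$, knowing $y$ at precision $n$ and $e$ at precision $n-m$ does not determine $e$ at precision $n$. Your fallback of paying $K^A_{n,n-m}(e\mid e)$ does not rescue this. That term can be as large as $(d-1)m$, which cannot be absorbed into $\eps(n-m)$, and the lower bound on $e$ only makes it larger.

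The missing idea, which is the paper's, is to replace $e$ by $e'$: the fixed computable representative of the $2^{-(n-m)}$-cap containing $e$ (Section~\ref{ssec:prelimAlgMethods1}). Your curvature picture already explains why this is allowed. Inside the $2^{-m}$-cube of $y$, any hyperplane through $y$ whose normal is within $2^{-(n-m)}$ of $e$ stays within $2^{-n}$ of the sphere, so the normal only needs precision $n-m$.

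Concretely, $m\ge n-m$, so $e'$ is computable from $x$ and $y$ at precision $m$. The angle $\theta$ between $y-x$ and $e'$ satisfies $\theta\lesssim 2^{-(n-m)}$ and is known to precision $m$. Hence $p_{e'}(y-x)=|x-y|\cos\theta$ is known to precision $n$, and so is $p_{e'}y$. This gives
\[
K^{A,x}_{n,n,m}(y\mid |x-y|,y)\lessapprox K^{A,x}_{n,n,n,m}(y\mid p_{e'}y,e',y)\le K^{A}_{n,n,n,m}(y\mid p_{e'}y,e',y)+O(1).
\]
The three hypotheses of the Corollary relative to $A$, with $e'$ in place of $e$, then hold:
\begin{itemize}
\item (i) follows from the Frostman hypothesis.
\item (ii) follows from Lemma~\ref{lem:C1C2C3}, since it only involves precisions $k\le n-m$, where $e'$ and $e$ agree.
\item (iii) is trivial, because $K^A_{n,n-m}(e'\mid e')\approx 0$.
\end{itemize}
One minor point: for $d=4$ you cannot set $s=d/2$, because the Corollary requires $s>d-2=d/2$. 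Keep the $s>d/2$ from Lemma~\ref{lem:C1C2C3}, capped at $d-1$, and use that $\alpha$ is increasing in $s$.
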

\begin{proof}
The proof of this proposition relies on the following elementary observation. If the angle between $y\in\R^d$ and $e\in S^{d-1}$ is $\theta$, then $p_e y=|y|\cos \theta$. In this formula, if we know $\theta$ with precision $m$, and $\theta\lessim 2^{-(n-m)}$, then we know $\cos\theta$ with precision $n$. If, in addition, $|y|\sim 1$ and we know $y$ with precision $n$, then we know $p_e y$ with precision $n$.

We apply this observation with $y$ replaced by $x-y$ and $e$ replaced by the $2^{-(n-m)}$-dyadic representative (defined in Section \ref{ssec:prelimAlgMethods1}) of the direction between $x$ and $y$. If we know $x$, and we know $y$ with precision $m$, then we know the direction between $x$ and $y$ with precision $m$. Since $m\ge n-m$, therefore we know $e'$. If, in addition, we know $|x-y|$ with precision $n$, then we know $p_{e'} (x-y)$ with precision $n$ and hence we know $p_{e'}y$ with precision $n$. 

Since if we know $e'$ with precision $n-m$ then we know $e'$ with every precision, therefore this shows that

     $$K^{A,x}_{n,n,m}(y\mid |x-y|, y) \lessapprox K^{A}_{n,n,n,m}(y \mid p_{e^\prime}y, e^\prime, y),$$ 
     and it also shows that (iii) of Corollary \ref{cor:algorithmicIncidenceProjection} holds with $e$ replaced by $e'$.
     
     Also, assumption (i) of  Corollary \ref{cor:algorithmicIncidenceProjection} holds with $t$ replaced by $\sigma$, and (ii) holds with $s$ given by Lemma \ref{lem:C1C2C3}. The proof is finished by applying 
     Corollary \ref{cor:algorithmicIncidenceProjection}.
\end{proof}

\begin{corollary}\label{cor:conditionalBoundsExactFrostman}
Let $m \leq n$ such that $n\leq 2m$ and $n-m\ge C \eta^{-1} \log n$. If $[m,n]$ is $(\sigma, \eta)$-Frostman with $\sigma \geq d/2$, then
    \begin{equation}\label{eq:boundSigmaGreaterDover2}
        K^{A,x}_{n,m}(|x-y|\mid |x-y|) \gtrapprox n-m - \eps (n-m).
    \end{equation}
    If $[m,n]$ is $(\sigma, \eta)$-Frostman and $(\sigma, \eta)$-Katz-Tao with $\sigma < d/2$,
    then
    \begin{equation}\label{eq:boundSigmaLessThanDover2}
        K^{A,x}_{n,m}(|x-y|\mid |x-y|) \gtrapprox K^A_{n,m}(y\mid y)/2 - \eps (n-m).
    \end{equation}
\end{corollary}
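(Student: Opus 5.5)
The plan is to pass from the conditional bound of Proposition~\ref{prop:conditionalBoundsFrostmanIntervals} to a lower bound on the complexity of the distance, using symmetry of information. Since $y$ determines nothing beyond itself and $|x-y|$ is computable from $x$ and $y$, we have
\begin{equation*}
K^{A,x}_{n,m}(y\mid y) \lessapprox K^{A,x}_{n,m}(|x-y|\mid y) + K^{A,x}_{n,n,m}(y\mid |x-y|, y).
\end{equation*}
Also $K^{A,x}_{n,m}(|x-y|\mid y)\lessapprox K^{A,x}_{n,m}(|x-y|\mid |x-y|)$, because given $x$ and $y$ to precision $m$ we know $|x-y|$ to precision $m$ (here $|x-y|\sim 1$). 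Finally, assumption (ii) and symmetry of information give $K^{A,x}_{n,m}(y\mid y)\approx K^A_{n,m}(y\mid y)$. Combining these three facts with Proposition~\ref{prop:conditionalBoundsFrostmanIntervals} yields, on any doubling $(\sigma,\eta)$-Frostman interval,
\begin{equation*}
K^{A,x}_{n,m}(|x-y|\mid |x-y|) \gtrapprox \alpha(n-m) - \eps(n-m), \qquad \alpha=\min\Bigl\{\sigma, \tfrac{\frac d2+\sigma-(d-2)}{2},1\Bigr\}.
\end{equation*}

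For \eqref{eq:boundSigmaGreaterDover2}, take $\sigma\ge d/2$. Then $\frac{d/2+\sigma-(d-2)}{2}\ge \frac{d-(d-2)}{2}=1$, so $\alpha=1$ and the bound follows directly. If $\sigma>d$ is allowed, first replace $\sigma$ by $\min\{\sigma,d\}$; this is harmless because Frostman with a larger $\sigma$ implies Frostman with a smaller one.

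For \eqref{eq:boundSigmaLessThanDover2}, take $\sigma<d/2$ with the interval also $(\sigma,\eta)$-Katz-Tao. Combined with Frostman, this gives $K^A_{n,m}(y\mid y)\approx K^A_n(y)-K^A_m(y)=\sigma(n-m)\pm O(\eta(n-m))$. It therefore suffices to check that $\alpha\ge\sigma/2$.
\begin{itemize}
\item The terms $\sigma$ and $1$ are at least $\sigma/2$, using $\sigma< d/2\le 2$ since $d\le4$.
\item For the middle term, $\frac{d/2+\sigma-(d-2)}{2}\ge \sigma/2$ is equivalent to $d/2-(d-2)\ge 0$, which holds exactly when $d\le 4$.
\end{itemize}
Hence $\alpha(n-m)\ge \tfrac12 K^A_{n,m}(y\mid y)-O(\eta(n-m))$, and choosing $\eta$ small relative to $\eps$ gives the claim.

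The main obstacle I expect is the first step: justifying the chain of conditional-complexity inequalities with the mixed precisions, and in particular transferring $K^{A,x}_{n,m}(y\mid y)$ to $K^{A}_{n,m}(y\mid y)$ via (ii). This needs (ii) at both precisions $n$ and $m$, together with symmetry of information relative to the oracle $(A,x)$. The remaining steps are arithmetic.
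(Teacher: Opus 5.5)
Your proposal is correct and follows the paper's proof. It uses the same chain of inequalities: Proposition~\ref{prop:conditionalBoundsFrostmanIntervals}, subadditivity, replacing the condition $y$ by $|x-y|$ at precision $m$, and assumption (ii) to drop $x$ from $K_{n,m}(y\mid y)$. This yields $K^{A,x}_{n,m}(|x-y|\mid |x-y|)\gtrapprox \alpha(n-m)-\eps(n-m)$, with $\alpha=1$ when $\sigma\ge d/2$. In the second case you check explicitly that Katz-Tao gives $K^A_{n,m}(y\mid y)\lessapprox(\sigma+\eta)(n-m)$ and that $\alpha\ge\sigma/2$ precisely because $d\le 4$; this spells out what the paper compresses into ``since $d=3,4$''.
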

\begin{proof}
    We first note that, since $x$ doesn't give any information about $y$, we have
    \begin{align*}
        K^{A,x}_{n,m}(y\mid y) &\approx K^{A,x}_{n}(y) - K^{A,x}_m(y)
        \approx K^A_n(y) - K^A_m(y)
        \approx K^A_{n,m}(y\mid y).
    \end{align*}

    In both cases, by Proposition \ref{prop:conditionalBoundsFrostmanIntervals}, and the symmetry of information,
    \begin{align*}
        K^A_{n,m}(y\mid y) - \alpha(n-m) + \eps (n-m) &\gtrapprox K^{A,x}_{n,n,m}(y\mid |x-y|, y)\\
        &\approx K^{A,x}_{n,m}(y\mid  y) - K^{A,x}_{n,m}(|x-y|\mid  y)\\
        &\gtrapprox K^{A,x}_{n,m}(y\mid  y) - K^{A,x}_{n,m}(|x-y|\mid  |x-y|)\\
        &\approx K^{A}_{n,m}(y\mid  y) - K^{A,x}_{n,m}(|x-y|\mid  |x-y|).
    \end{align*}
    
    If $[m,n]$ is $(\sigma, \eta)$-Frostman with $\sigma \geq d/2$, the above inequality holds with $\alpha = 1$, proving \eqref{eq:boundSigmaGreaterDover2}.  Similarly, if $[m,n]$ is $(\sigma, \eta)$-Frostman and $(\sigma, \eta)$-Katz-Tao with $\sigma < d/2$, then the above inequality holds with $$\alpha = \min\{\sigma, \frac{d/2 + \sigma -(d-2)}{2},1\}.$$In particular, since $d = 3, 4$, we conclude that \eqref{eq:boundSigmaLessThanDover2} holds. 
 
\end{proof}

\begin{remark}
    Corollary \ref{cor:conditionalBoundsExactFrostman} can be generalized to hold for arbitrary $d$, with a nearly identical proof. We won't use this generalization in the paper, but it may be of independent interest.
    \begin{corollary}\label{cor:conditionalBoundsExactFrostmanGeneralized}
 Let $m \leq n$ such that $n\leq 2m$ and $n-m\ge C \eta^{-1} \log n$. If $[m,n]$ is $(\sigma, \eta)$-Frostman with $\sigma \geq d/2$, then
    \begin{equation*}
        K^{A,x}_{n,m}(|x-y|\mid |x-y|) \gtrapprox n-m - \eps (n-m).
    \end{equation*}
    If $[m,n]$ is $(\sigma, \eta)$-Frostman and $(\sigma, \eta)$-Katz-Tao with $\sigma < d/2$, 
    then
    \begin{equation*}
        K^{A,x}_{n,m}(|x-y|\mid  |x-y|) \gtrapprox \min\{\sigma, \frac{s + \sigma -(d-2)}{2}, 1\}(n-m) - \eps (n-m)
    \end{equation*}
\end{corollary}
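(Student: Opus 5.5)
The plan is to repeat the proof of Corollary \ref{cor:conditionalBoundsExactFrostman} almost verbatim, keeping track of the general exponent instead of specializing to $d=3,4$. Here $s$ is the exponent supplied by Lemma \ref{lem:C1C2C3}, with $s>d/2$. We may assume $s\le d-1$, replacing $s$ by $\min\{s,d-1\}$ if needed. Then $s\in(d-2,d-1]$, since $d/2>d-2$ when $d\le 3$, and otherwise the standing assumption $s_0>d-2$ gives it. So Corollary \ref{cor:algorithmicIncidenceProjection} applies with this $s$.

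First, assumption (ii), $K^{A,x}_n(y)\gtrapprox K^A_n(y)$, combined with symmetry of information gives
\[
K^{A,x}_{n,m}(y\mid y)\approx K^A_{n,m}(y\mid y).
\]
Next we apply Proposition \ref{prop:conditionalBoundsFrostmanIntervals}, using $s$ in place of $d/2$ in the second entry of the minimum. Its proof goes through Corollary \ref{cor:algorithmicIncidenceProjection} with hypothesis (ii) supplied by Lemma \ref{lem:C1C2C3}, so it yields the bound with
\[
\alpha=\min\Big\{\sigma,\tfrac{s+\sigma-(d-2)}{2},1\Big\}.
\]
Then symmetry of information, together with the bound $K^{A,x}_{n,m}(|x-y|\mid y)\lessapprox K^{A,x}_{n,m}(|x-y|\mid |x-y|)$, gives the chain
\[
K^A_{n,m}(y\mid y)-\alpha(n-m)+\eps(n-m)\gtrapprox K^A_{n,m}(y\mid y)-K^{A,x}_{n,m}(|x-y|\mid |x-y|).
\]
This bound holds because $y$ at precision $m$ together with $x$ determines $|x-y|$ at precision $m$. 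Rearranging gives
\[
K^{A,x}_{n,m}(|x-y|\mid |x-y|)\gtrapprox \alpha(n-m)-\eps(n-m).
\]

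It remains to identify $\alpha$ in each case. When $\sigma\ge d/2$, we have $s+\sigma-(d-2)>d-(d-2)=2$, so the middle term exceeds $1$. Also $\sigma\ge d/2\ge 1$. Hence $\alpha=1$, which gives the first inequality. When $\sigma<d/2$, the second displayed inequality is exactly the rearranged bound with the stated minimum. In this case the Katz--Tao hypothesis is not needed; it only mattered in the original corollary for converting $\alpha(n-m)$ into $K^A_{n,m}(y\mid y)/2$.

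The main obstacle I expect is bookkeeping. The $\eta$ of Proposition \ref{prop:conditionalBoundsFrostmanIntervals} has to be chosen from Corollary \ref{cor:algorithmicIncidenceProjection} uniformly in $\sigma\in[0,d]$. The cleanest way is to discretize $\sigma$ on a finite grid and slightly lower $\sigma$, which costs only $O(\eps)$. We also need $n-m\ge n_0$, which holds because $n-m\ge C\eta^{-1}\log n$.
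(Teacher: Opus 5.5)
Your proposal is correct and follows the paper's route. The paper's proof is the same symmetry-of-information chain as in Corollary \ref{cor:conditionalBoundsExactFrostman}, fed by Proposition \ref{prop:conditionalBoundsFrostmanIntervals} with the exponent $s$ from Lemma \ref{lem:C1C2C3} kept in the minimum instead of $d/2$, and then reading off $\alpha=1$ when $\sigma\ge d/2$. Your two extra remarks are accurate: the Katz--Tao hypothesis is not needed for the second bound, and when $d\ge 5$ one needs $s>d-2$, obtained via $s_0$, to invoke Corollary \ref{cor:algorithmicIncidenceProjection}.
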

\end{remark}

We now turn to the proof of Theorem \ref{thm:maintheoremEff}. The plan is as follows. We will use the partition of Section \ref{sec:LipschitzFunction} to partition the complexity function $K^A(y)$ into intervals, and apply Corollary \ref{cor:conditionalBoundsExactFrostman} to each interval. We then use the symmetry of information to sum the contribution of each part. 

We now fix an oracle $A$, $x, y\in\R^d$ and $e = \frac{y-x}{|x-y|}$ which satisfy 
    \begin{itemize}
\item[\textup{(C1)}] $\dim_H^A(x), \dim^A_H(y), \dim^{A,x}_H(e) > \frac{d}{2}$;
\item[\textup{(C2)}] $K^{A, x}_n(y) \gtrapprox K^{A}_n(y)$ for every $n\in\N$.
\end{itemize}
We fix $n\in\N$. Let $D$ be the oracle of Lemma \ref{lem:oracles} with $x$ replaced by $y$, the $\eta$ of the lemma replaced with $d/2$. By Lemma \ref{lem:oracles}(i), $K^{A,D}_n(y) \approx dn/2$ and $$K^{A,D}_m(y) = \min\{dn/2, K^A_m(y)\} + O(\log n),$$for every $m\leq n$. Let $f:[0,n] \rightarrow \R_+$ be a monotone, $(d+1)$-Lipschitz, piecewise linear function approximating the complexity of $y$ as described in Section \ref{ssec:prelimAlgMethods1}.  We choose $f$ such that $f(0) = 0$, $f(n) = dn/2$, $f(t) \geq dt/2$ for all $0 \leq t \leq n$, and $$\left| f(m) - K^{A,D}_m(y)
\right| \leq C\log n$$ for all $m \leq n$. We note that $f$ is $d/2$-Katz-Tao. 

We now prove Theorem \ref{thm:maintheoremEff} using the selection of intervals given by Proposition \ref{prop:existenceNicePartition}. 
\begin{proof}[Proof of Theorem \ref{thm:maintheoremEff}]
    Let $\eps > 0$. Let $n$ be sufficiently large. Let $G \cup R \cup B$ be the sets of intervals given by Proposition \ref{prop:existenceNicePartition}. By changing $\eta$ by a multiplicative constant (if necessary), without loss of generality we can assume that the endpoints of these intervals are integers. This gives us a partition $P$ of $[0,n]$ into intervals with integer endpoints. Since $G\cup R\cup B$ has $\lesssim \log n$ many intervals, and $P$ has at most twice more, therefore by applying the symmetry of information $\lesssim \log n$ many times, we see that 
    \begin{equation}\label{k}
        K^{A,D,x}_{n}(|x-y|) \geq \sum_{[a,b]\in P} K^{A,D,x}_{b,a}(|x-y|\mid |x-y|) - O(\log^2 n).
    \end{equation}
 We choose $n$ large enough so that $\log^2 n\le \eps n$.   
    
    By Corollary \ref{cor:conditionalBoundsExactFrostman}, which is also true for oracle $(A,D)$, if $[a,b]\in G\cup R$, then 
    \begin{equation*}\label{eq:mainThmBoudFE}
        K^{A,D,x}_{b,a}(|x-y|\mid |x-y|) \gtrapprox b-a - \eps(b-a), 
    \end{equation*}
    and if $[a, b] \in B$, then
    \begin{equation*}\label{eq:mainThmBoudFE2}
        K^{A,D,x}_{b,a}(|x-y|\mid |x-y|) \gtrapprox K^{A,D}_{b, a}(y\mid y)/2 - \eps(b-a).
    \end{equation*}
For the intervals $[a,b]\not\in G\cup R\cup B$ we use the facts that their total length is $O(\eps n)$, and there are $\lesssim\log n$ many of them. Therefore, summing for all intervals in $P$, we can see from \eqref{k} that

    \begin{align}\label{hm}
        K^{A,D,x}_{n}(|x-y|) &\geq |R| + |G|
        + \sum_{[a,b]\in B} K^{A,D}_{b, a}(y\mid y)/2 - O(\eps n)\\
        &\ge |R|+|G|+ \sum_{[a,b]\in B} (f(b)-f(a))/2 - O(\eps n),\label{mmm}
    \end{align}
     where $|R|$ and $|G|$ denotes the total length of the intervals in $R$ and $G$, respectively. 
     We will now use the trivial identity $$dn/2=f(n)-f(0)=\sum_{[a,b]\in P} (f(b)-f(a)).$$ 
In this sum, if $[a,b]\in G$ then it is $d/2$-Frostman and $d/2$-Katz Tao, therefore $f(b)-f(a)=d(b-a)/2$. If $[a,b]\in R$ then we use Remark \ref{remark:complexityfunction2}, and summing for those intervals which are not in $G\cup R\cup B$ we get at most a $O(\eps n)$ since $f$ is Lipschitz. This shows that 
$$dn/2\le d|G|/2+d|R|+\sum_{[a,b]\in B} (f(b)-f(a))+O(\eps n),$$ and, recalling also $|G|\ge |R|$ from Proposition \ref{prop:existenceNicePartition}, now $\eqref{hm}$ gives
    \begin{align*}
        K^{A,D,x}_{n}(|x-y|) &\geq |R| + |G| +dn/4-d|G|/4-d|R|/2-O(\eps n)\\
        &=dn/4+(1-d/2)|R|+(1-d/4)|G|-O(\eps n)\\
        &\ge dn/4-(3d/4-2)|R|-O(\eps n).
    \end{align*}

    If $|R| \le n/3 - O(\eps n)$, then this shows that $K^{A,D,x}_n(|x-y|) \gtrapprox 2n/3 - O(\eps n)$, and if $|R| \ge n/3 - O(\eps n)$, then the same inequality follows from \eqref{mmm}.

    Since $K^{A,x}_n(|x-y|) \gtrapprox K^{A,D,x}_{n}(|x-y|)\ge 2n/3-O(\eps n)$, the conclusion follows.
\end{proof}

\bibliographystyle{plainurl}
\bibliography{distance}

\end{document}